\newtheorem{theorem}{Theorem}
\newtheorem{lemma}{Lemma}
\newtheorem{proposition}[theorem]{Proposition}
\newtheorem*{theorem*}{Theorem}
\theoremstyle{definition}
\newtheorem{definition}{Definition}
\newtheorem{example}{Example}
\newtheorem{remark}{Remark}
\newenvironment{PfofHiddenGibbs}[1]
{\par\vskip2\parsep\noindent{\sc Proof of Theorem\ \ref{Thm:HiddenGibbs}. }}{{\hfill
$\Box$}
\par\vskip2\parsep}
\newenvironment{PfofPosteriorConvergence}[1]
{\par\vskip2\parsep\noindent{\sc Proof of Theorem\ \ref{Thm:PosteriorConvergence}. }}{{\hfill
$\Box$}
\par\vskip2\parsep}
\newenvironment{PfofDirectGibbs}[1]
{\par\vskip2\parsep\noindent{\sc Proof of Theorem\ \ref{Thm:DirectGibbs}. }}{{\hfill
$\Box$}
\par\vskip2\parsep}
\newenvironment{PfofPropLB}[1]
{\par\vskip2\parsep\noindent{\sc Proof of Proposition\ \ref{Prop:LB}. }}{{\hfill
$\Box$}
\par\vskip2\parsep}
\newenvironment{PfofPropUB}[1]
{\par\vskip2\parsep\noindent{\sc Proof of Proposition\ \ref{Prop:UB}. }}{{\hfill
$\Box$}
\par\vskip2\parsep}
\def\N{\mathbb{N}}
\def\R{\mathbb{R}}
\def\cJ{\mathcal{J}}
\def\cM{\mathcal{M}}
\def\1{\mathbf{1}}
\def\cU{\mathcal{U}}
\def\cX{\mathcal{X}}
\def\cX{\mathcal{X}}
\def\cY{\mathcal{Y}}
\def\Id{I_{\Theta}}
\DeclareMathOperator*{\argmin}{argmin}
\DeclareMathOperator{\diam}{diam}
\DeclareMathOperator{\proj}{proj}
\title[Gibbs posteriors and thermodynamics]{Gibbs posterior convergence and the thermodynamic formalism}
\begin{document}

\author{Kevin McGoff}
\address{UNC Charlotte}
\email{kmcgoff1@uncc.edu}
\thanks{Kevin McGoff would like to acknowledge funding from NSF grant DMS 1613261.}

\author{Sayan Mukherjee}
\address{Duke University}
\email{sayan@stat.duke.edu}
\thanks{Sayan Mukherjee would like to acknowledge funding from NSF  DEB-1840223, NIH R01 DK116187-01,
HFSP RGP0051/2017, NSF DMS 17-13012, and NSF DMS 16-13261.}

\author{Andrew Nobel}
\address{UNC Chapel Hill}
\email{nobel@email.unc.edu}
\thanks{Andrew Nobel would like to acknowledge funding from NSF DMS-1613261, NSF DMS-1613072, NIH R01 HG009125-01.}

\maketitle

\begin{abstract}
In this paper we consider a Bayesian framework for making inferences about dynamical systems from  ergodic observations. The proposed Bayesian procedure is based on the Gibbs posterior, a decision theoretic generalization of standard Bayesian inference. We place a prior over a model class consisting of a parametrized family of Gibbs measures on a mixing shift of finite type. This model class generalizes (hidden) Markov chain models by allowing for long range dependencies, including Markov chains of arbitrarily large orders. We characterize the asymptotic behavior of the Gibbs posterior distribution on the parameter space as the number of observations tends to infinity. In particular, we define a limiting variational problem over the space of joinings of the model system with the observed system, and we show that the Gibbs posterior distributions concentrate around the solution set of this variational problem. In the case of properly specified models our convergence results may be used to establish posterior consistency. This work establishes tight connections between Gibbs posterior inference and the thermodynamic formalism, which may inspire new proof techniques in the study of Bayesian posterior consistency for dependent processes.
\end{abstract}

\section{Introduction} \label{Sect:Intro}

In this work we establish asymptotic results concerning Bayesian inference for certain dynamical systems. We consider a fairly general framework in which both the observations and the fitted models arise from dynamical systems. Our analysis brings together two distinct strands of research, both of which were originally inspired by connections with statistical physics: the thermodynamic formalism in dynamical systems, and the Gibbs posterior principle for Bayesian inference in statistics. Our work highlights the substantial connections between these two areas and shows that together they produce a natural framework for Bayesian inference about dynamical systems. Our general results guarantee the concentration of Gibbs posterior distributions around certain sets of parameters that are characterized by a variational principle. As applications of these general results, we also establish posterior consistency results for some classes of dynamical models, generalizing previous posterior consistency results for Markov and hidden Markov models in Bayesian nonparametrics. 

\subsection{Observed system}

Our inference framework consists of two main components.  The first component is an observed dynamical system, defined as follows. Let $\mathcal{Y}$ be a complete separable metric space. Here and throughout this work we assume that all such spaces are endowed with their Borel $\sigma$-algebras, and we suppress this choice in our notation. Let $T : \mathcal{Y} \to \mathcal{Y}$ be a Borel measurable map. We let $\mathcal{M}(\mathcal{Y})$ denote the set of Borel probability measures on $\mathcal{Y}$, endowed with the weak$^*$ topology on measures. For $\nu \in \mathcal{M}(\mathcal{Y})$, we say that $\nu$ is invariant under $T$ if $\nu(T^{-1} E) = \nu(E)$ for all Borel sets $E \subset \mathcal{Y}$. The set of $T$-invariant measures in $\mathcal{M}(\mathcal{Y})$ is denoted by $\mathcal{M}(\mathcal{Y},T)$. Furthermore, we say that $\nu \in \mathcal{M}(\mathcal{Y},T)$ is ergodic if $\nu(E) \in \{0,1\}$ for all Borel sets $E$ satisfying $T^{-1}(E) = E$. Our standard assumption is that the observed system has the form $(\mathcal{Y},T,\nu)$, where $\nu \in \mathcal{M}(\mathcal{Y},T)$ is ergodic.

\subsection{Model families}
The second component of our inference framework is a collection of models. In order to model dynamics in the standard statistical setting, one typically considers (hidden) Markov models or more complex state space models. In our analysis we would like to be able to handle model processes with long range dependencies, and so we consider 
a general class of models known as Gibbs measures. This class of models strictly generalizes the class of finite state Markov models with arbitrarily large order.

Before giving a precise definition of a Gibbs measure, we must first introduce the underlying state space for such models, which is called a mixing shift of finite type (SFT). A shift of finite type is a dynamical system that is the topological analogue of a finite state aperiodic and irreducible Markov chain. SFTs have been widely studied in the dynamical systems literature, both for their own sake \cite{LindMarcus} and as model systems for some smooth systems such as Axiom A diffeomorphisms \cite{Bowen1975}. Furthermore, SFTs have substantial connections to statistical physics and other fields such as coding and information theory \cite{LindMarcus,Ruelle2004}. 

Here we give a proper definition for a mixing SFT. Let $\mathcal{A}$ be a finite set, known as an alphabet, and let $\Sigma = \mathcal{A}^{\mathbb{Z}}$ be the set of bi-infinite sequences $x = (x_n)$ with values in $\mathcal{A}$. Define the left-shift map $\sigma : \Sigma \to \Sigma$ by $\sigma(x)_{n+1} = x_n$. A set $\mathcal{X}$ is called an SFT if there exists $n \geq 0$ and a collection of words $\mathcal{W} \subset \mathcal{A}^n$ such that $\mathcal{X}$ is exactly the set of sequences in $\Sigma$ that contain no words from $\mathcal{W}$:
\begin{equation*}
\mathcal{X} = \bigl\{ x \in \Sigma : \forall i \in \mathbb{Z}, \, x_{i+1} \dots x_{i+n} \notin \mathcal{W} \}.
\end{equation*}
Here $\mathcal{W}$ is called a set of \textit{forbidden words} for $\mathcal{X}$. 
Note that by choosing $\mathcal{W} = \varnothing$, one obtains the full sequence space $\Sigma$, which is known as the full shift (on the alphabet $\mathcal{A}$).
Also, we endow $\mathcal{A}$ with the discrete topology and $\Sigma$ with the product topology, which makes any such $\mathcal{X}$ closed and compact.
We define the map $S : \mathcal{X} \to \mathcal{X}$ to be the restriction of the left shift $\sigma$ to $\mathcal{X}$. 
Let $\mathcal{L}_m$ denote the set of words of length $m$ (i.e., elements of $\mathcal{A}^m$) that appear in at least one point of $\mathcal{X}$, and let $\mathcal{L} = \cup_{m \geq 0} \mathcal{L}_m$.
An SFT $\mathcal{X}$ is said to be mixing if for any two words $u,v \in \mathcal{L}$, there exists $N$ such that for all $m \geq N$, there exists a word $w \in \mathcal{L}_m$ such that $uwv \in \mathcal{L}$. 
The following equivalent definition is perhaps more intuitive to readers familiar with Markov chains.
Let $A$ be the square matrix indexed by $\mathcal{A}^n$ defined for for words $u,v \in \mathcal{A}^n$ by the rule
\begin{equation*}
A_{uv} = \left\{ \begin{array}{ll}
                          1, & \text{if } \exists x \in \mathcal{X} \text{ such that } x_0^{n-1} = u \text{ and } x_1^n = v \\
                          0, & \text{otherwise}.
                         \end{array}
                         \right.
\end{equation*}
Then $\mathcal{X}$ is mixing if and only if there exists $N \geq 1$ such that $A^N$ contains all positive entries.
Our standard assumption on $\mathcal{X}$ is that it is a mixing SFT.

To model stochastic behavior on the topological system $(\mathcal{X},S)$, we consider a family of $S$-invariant probability measures on $\mathcal{X}$, called Gibbs measures.   
To introduce Gibbs measures, one begins with a function $f : \mathcal{X} \to \mathbb{R}$, which is called a potential function (or just a potential). 
A Borel probability measure $\mu$ on $\mathcal{X}$ is said to be a Gibbs measure corresponding to the potential function $f : \mathcal{X} \to \mathbb{R}$ if there exists constants $\mathcal{P} \in \mathbb{R}$ and $K>0$ such that for all $x \in \mathcal{X}$ and $m \geq 1$, 
\begin{equation} \label{Eqn:GibbsProp}
K^{-1} \leq \frac{\mu\bigl( x[0,m-1]) \bigr)}{\exp \Bigl( - \mathcal{P} m + \sum_{k=0}^{m-1} f \bigl( S^k (x) \bigr) \Bigr)} \leq K,
\end{equation}
where $x[0,m-1]$ is the cylinder set of points $y$ in $\mathcal{X}$ such that $x_i = y_i$ for all $i = 0,\dots,m-1$. The property in (\ref{Eqn:GibbsProp}) is called the Gibbs Property. By a celebrated result of Bowen \cite{Bowen1975}, under mild regularity conditions on $f$, there is a unique Gibbs measure $\mu \in \mathcal{M}(\mathcal{X},S)$ with potential function $f$, and furthermore the measure $\mu$ is ergodic. The constant $\mathcal{P} = \mathcal{P}(f)$ is called the \textit{pressure} 
of $f$.

The Gibbs measure is a generalization of the canonical ensemble in statistical physics to infinite systems. 
Potential functions have natural connections with Hamiltonians in the study of lattice systems in statistical physics. 
In considering inference, we will think of loss functions as potential functions.
We remark (again) that the class of Gibbs measures strictly generalizes the class of Markov chains, allowing for arbitrarily long dependencies. 
Indeed, any Markov chain of order $k$ on the alphabet $\mathcal{A}$ can be realized as a Gibbs measure by an appropriate choice of a potential function that depends on only $k$ coordinates.
On the other hand, when the potential function $f$ depends on infinitely many coordinates, the corresponding Gibbs measure is not Markov of any order.
In this way, our model families may include Markov chains with unbounded orders, which highlights the degree of dependence allowed by our framework.

Lastly, let us mention the regularity condition that we require our model families to satisfy. For points $x,y$ in $\mathcal{X}$, we let $n(x,y)$ denote the infimum of all $|m|$ such that $x_m \neq y_m$. Then we define a metric $d( \cdot, \cdot)$ on $\mathcal{X}$ by setting $d_{\mathcal{X}}(x,y) = 2^{-n(x,y)}$. For $r >0$, we let $C^{r}(\mathcal{X})$ denote the set of continuous functions from $\mathcal{X}$ to $\mathbb{R}$ with H\"{o}lder exponent $r$, that is, the set of functions $f : \mathcal{X} \to \mathbb{R}$ for which there exists a constant $c$ such that for all $x,y \in \mathcal{X}$, 
\begin{equation*}
|f(x) - f(y)| \leq c \, d_{\mathcal{X}}(x,y)^r.
\end{equation*} 
Furthermore, we endow $C^r(\mathcal{X})$ with the topology induced by the norm $\| \cdot \|_r$, where
\begin{equation*}
\| f \|_r = \sup_{x \in \mathcal{X}} | f(x) | + \sup_{x \neq y} \frac{ |f(x) - f(y)| }{ d_{\mathcal{X}}(x,y)^r }.
\end{equation*}
Now we define the regularity condition necessary for our model families.

\begin{definition} \label{Def:RegularFamily}
Let $\Theta$ be compact metric space. A parametrized family of potential functions $\mathcal{F} = \{ f_{\theta} : \theta \in \Theta\}$ will be called a \textit{regular family} if there exists $r>0$ such that $\mathcal{F} \subset C^{r}(\mathcal{X})$ and the map $\theta \to f_{\theta}$ is continuous in the topology induced by the norm $\| \cdot \|_r$.
\end{definition}

If a family $\{ f_{\theta} : \theta \in \Theta\}$ is a regular family, then the map $\theta \mapsto \mu_{\theta}$ is continuous in the weak$^*$ topology on measures, and the constants $K(f_{\theta})$ and $\mathcal{P}(f_{\theta})$ that appear in (\ref{Eqn:GibbsProp}) depend continuously on $\theta$ (see \cite{Alves2018}). Furthermore, since $\Theta$ is compact, we get a uniform Gibbs property: there exists a uniform constant $K$ and a continuous function $\theta \mapsto \mathcal{P}(f_{\theta})$ such that for all $\theta \in \Theta$, $x \in \mathcal{X}$, and $m \geq 1$, 
\begin{equation} \label{Eqn:UniformGibbs}
K^{-1} \leq \frac{\mu_{\theta}( x[0,m-1] ) }{ \exp\Bigl( - \mathcal{P}(f_{\theta}) m + \sum_{k = 0}^{m-1} f_{\theta}( S^k x) \Bigr)} \leq K.
\end{equation}

We assume throughout that $\mathcal{F} = \{ f_{\theta} : \theta \in \Theta\}$ is a regular family of potential functions, and our model class consists of the corresponding parametrized family of Gibbs measures $\{ \mu_{\theta} : \theta \in \Theta\}$.

\subsection{Inference} \label{Sect:Inference}

The inference paradigm we consider is known as Gibbs posterior inference, which is a generalization of the standard Bayesian inference framework. 
The basic idea behind the Gibbs posterior   \cite{Bissiri2016,Jiang2008} is to replace the likelihood with an exponentiated loss or utility function in the standard Bayesian procedure for updating beliefs about an unknown parameter of interest $\theta$. 
Whereas the standard Bayes posterior takes the form
\begin{eqnarray*}
 \pi(\theta \mid \mbox{data}) &=& \frac{\mbox{Likelihood}(\mbox{data} \mid \theta) \times \pi(\theta)}{\int_\Theta  \mbox{Likelihood}(\mbox{data} \mid \theta) \times \pi(\theta) \, d\theta },
 \end{eqnarray*}
 the Gibbs posterior has the form
 \begin{eqnarray*}
  \pi(\theta \mid \mbox{data}) &=& \frac{ \exp(-\ell(\mbox{data},\theta)) \times \pi(\theta)}{\int_\Theta \exp(-\ell(\mbox{data},\theta)) \times \pi(\theta) \, d\theta },
  \end{eqnarray*}
  where $\ell( \mbox{data}, \theta)$ is the loss associated with $\theta$ based on the observed data. 
When the loss function is the negative log-likelihood then the two paradigms are identical. The original motivation for the Gibbs posterior was to specify
a  coherent procedure for Bayesian inference when the parameter of interest is connected to observations via a loss function, rather than the classical setting where the likelihood or true sampling distribution is known; see \cite{Bissiri2016} for more arguments in favor of the Gibbs posterior and discussion about how the Gibbs posterior framework addresses model 
misspecification and robustness to nuisance parameters. Note that in the general Gibbs posterior framework without a likelihood, there is no generative model assumed for the observations. 

We consider models indexed by  a compact metric space $\Theta$ (with metric denoted $d_{\Theta}$), which will serve as a parameter space. 
The elements of $\Theta$ will be used to parametrize both the dependence structure of the Gibbs measures in our model class (e.g., transition probabilities) and the relationship between states and observations (e.g., emission probabilities). 
Recall that as part of our standard assumptions, we assume that our model class $\{\mu_{\theta} : \theta \in \Theta\}$ is a family of Gibbs measures on $\mathcal{X}$ corresponding to a regular family of potential functions (as in Definition \ref{Def:RegularFamily}). 

Also recall that the observed system has state space $\mathcal{Y}$ with invariant measure $\nu$.
Define a metric on $\Theta \times \mathcal{X}$ by the rule 
\begin{equation*}
d \bigl( (\theta,x), (\theta',x') \bigr) = \max \bigl( d_{\Theta}(\theta,\theta'), d_{\mathcal{X}}(x,x') \bigr).
\end{equation*}
Here and throughout this work, we assume that we have a loss function $\ell : \Theta \times \mathcal{X} \times \mathcal{Y} \to \R$ satisfying the following conditions:
\renewcommand{\theenumi}{\roman{enumi}}
\renewcommand{\labelenumi}{(\theenumi)}
\begin{enumerate}
\vspace{1mm}
\item $\ell$ is continuous;
\vspace{1mm}
\item there exists a measurable function $\ell^* : \cY \to \R$ such that for all $y \in \cY$, $\sup_{\theta,x} |\ell(\theta,x,y)| \leq \ell^*(y)$, and $\int \ell^* \, d\nu < \infty$;
\vspace{1mm}
\item for each $\delta>0$ there exists a measurable function $\rho_{\delta} : \cY \to (0,\infty)$ such that for each $y \in \cY$,
\begin{equation*}
\quad \quad \sup \bigl\{ |\ell(\theta,x,y) - \ell(\theta',x',y)| :  \, d \bigl((\theta,x), (\theta',x') \bigr) \leq \delta \bigl\} \leq \rho_{\delta}(y),
\end{equation*}
and $\lim_{\delta \to 0^+} \int \rho_{\delta} \, d\nu = 0$.
\end{enumerate}
Condition (ii) is an integrability condition on the loss, while condition (iii) is a requirement on the modulus of continuity of the loss.
In Section \ref{Sect:ExampleLossFunctions} we provide examples of loss functions satisfying these conditions. Note that including the  parameter $\theta$ in the loss function may be considered non-standard in statistics. However, this formulation will simplify notation throughout the paper, and in Section \ref{Sect:ExampleLossFunctions} we establish that this setting is equivalent to the standard one. Also note that the dependence of the loss on $\Theta$ and on the uncountable space $\mathcal{X}$ allows us to model continuous observations and emission probabilities.

With the loss function and parameter $\theta \in \Theta$ fixed, we define the loss of the finite sequence $x_0^{n-1} \in \cX^n$ with respect to a finite sequence of observations $y_0^{n-1} \in \cY^n$ to be the sum of the per-state losses:
\begin{equation} \label{Eqn:Ellen}
\ell_n(\theta; x_0^{n-1},y_0^{n-1}) = \sum_{k=0}^{n-1} \ell( \theta, x_k, y_k).
\end{equation}
When $x_0^{n-1} = (x,Sx, \dots, S^{n-1}x)$ and $y_0^{n-1} = (y,Ty,\dots,T^{n-1}y)$ are initial segments of trajectories of $S$ and $T$, respectively, we write $\ell_n(\theta,x,y)$ instead of $\ell_n(\theta; x_0^{n-1},y_0^{n-1})$.

Let us now give the definition of Gibbs posterior distributions on $\Theta$. Here we consider the subjective case, in which one begins with a prior distribution on $\Theta$. 
Let $\pi_0$ be a fully supported Borel probability measure on $\Theta$, which will serve as our prior distribution. First we extend $\pi_0$ to form a prior distribution on $\Theta \times \mathcal{X}$ as follows. 
Given the family $\{ \mu_{\theta} : \theta \in \Theta\}$ of Gibbs measures on $\mathcal{X}$, consider the induced prior distribution $P_0$ on $\Theta \times \cX$ defined for any Borel set $E \subset \Theta \times \cX$ by
\begin{equation} \label{Eqn:BigPrior}
P_0(E) = \int \int \mathbf{1}_{E}(\theta,x) \, d\mu_{\theta}(x) \, d\pi_0(\theta).
\end{equation}
According to the Gibbs posterior paradigm \cite{Bissiri2016,Jiang2008}, if we make observations $(y,Ty,\dots,T^{n-1}y) \in \cY^n$, then our updated beliefs should be represented by the Gibbs posterior distribution. This distribution is the Borel probability measure $P_n( \cdot \mid y)$ on $\Theta \times \cX$ defined for Borel sets $E \subset \Theta \times \cX$ by
\begin{align} \label{Eqn:GibbsDist}
P_n(E \mid y) & = \frac{ 1}{ Z_n(y) } \int_E \exp \bigl( -\ell_n(\theta,x,y) \bigr) \, dP_0(\theta,x) \\
 & = \frac{ 1}{ Z_n(y) } \int \int \mathbf{1}_E(\theta,x) \exp \bigl( -\ell_n(\theta,x,y) \bigr) \, d\mu_{\theta}(x) \, d\pi_0(\theta),
\end{align}
where $Z_n(y)$ is the normalizing constant (partition function), given by
\begin{equation*}
Z_n(y) = \int \exp \bigl( -\ell_n(\theta,x,y) \bigr) \, dP_0(\theta,x).
\end{equation*}
Then the Gibbs posterior distribution $\pi_n( \cdot \mid y)$ on $\Theta$ is simply the $\Theta$-marginal of $P_n( \cdot \mid y)$, which is defined for Borel sets $E \subset \Theta$ by
\begin{equation*}
\pi_n( E \mid y) = P_n( E \times \mathcal{X} \mid y).
\end{equation*}

As we are considering a Bayesian framework, all inference about the parameters $\theta \in \Theta$ based on the observations $(y, Ty, \dots, T^{n-1}y)$ is derived from the posterior $\pi_n( \cdot \mid y)$. 
We focus here on inference regarding the parameters in $\Theta$, since inference regarding the initial condition $x$ in $\mathcal{X}$ is known to be impossible for many dynamical systems, including shifts of finite type \cite{Lalley1999,LalleyNobel2006}.
Let us summarize our framework.
\begin{itemize}
\item We begin with a fully supported prior $\pi_0$ on a compact set $\Theta$ that smoothly parametrizes a family of Gibbs measures $\{\mu_{\theta} : \theta \in \Theta\}$ on $\mathcal{X}$.
\item From $\pi_0$ and $\{\mu_{\theta} : \theta \in \Theta\}$, we create an extended prior $P_0$ on $\Theta \times \mathcal{X}$.
\item We obtain observations $y,\dots, T^{n-1}y$ in $\mathcal{Y}$ from a stationary ergodic process $(\mathcal{Y},T,\nu)$.
\item From $P_0$, the observations, and the loss function $\ell$ we obtain the Gibbs posterior $P_n$ on $\Theta \times \mathcal{X}$.
\item Finally, we marginalize $P_n$ to get the posterior $\pi_n$ on $\Theta$.
\end{itemize}

\subsection{Main results}

Our analysis begins with an examination of the exponential growth rate of the (random) partition function $Z_n$ for large $n$. In particular, we establish a variational principle for the almost sure limit of $n^{-1} \log Z_n$ as $n$ tends to infinity.

\begin{theorem} \label{Thm:Pressure} 
Under the standard assumptions stated above there exists a lower semicontinuous function $V : \Theta \to \R$ such that for $\nu$-almost every $y \in \mathcal{Y}$,
\begin{equation*}
\lim_n -\frac{1}{n} \log Z_n(y) \ = \ \inf_{\theta \in \Theta} V(\theta).
\end{equation*}
\end{theorem}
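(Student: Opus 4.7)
The plan is a two-stage argument: first I would control the inner integral against $\mu_{\theta}$ for each fixed $\theta$, identifying the exponential rate of $\int e^{-\ell_n(\theta,x,y)} d\mu_{\theta}(x)$ as a deterministic quantity $V(\theta)$, and then I would combine these fiberwise rates via a Laplace-type argument on the compact parameter space $\Theta$ to obtain the claimed limit $\inf_{\theta} V(\theta)$.

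For the fiberwise step, fix $\theta \in \Theta$ and write $Z_n(\theta,y) = \int e^{-\ell_n(\theta,x,y)} d\mu_{\theta}(x)$. The uniform Gibbs property (\ref{Eqn:UniformGibbs}) allows me to approximate this by a sum over cylinders of length $n$, each cylinder $[w]$ weighted by $\exp(-n\mathcal{P}(f_{\theta}) + \sum_{k=0}^{n-1} f_{\theta}(S^k x_w))$ up to the constant $K$, so the exponent of each summand becomes the modified potential $f_{\theta} - \ell(\theta,\cdot,y)$ summed along an $S$-orbit. A pressure-type variational principle then expresses
\[
-\tfrac{1}{n}\log Z_n(\theta,y) \ = \ \mathcal{P}(f_{\theta}) - \tfrac{1}{n}\log \sum_{w \in \mathcal{L}_n} \exp\Bigl( \sum_{k=0}^{n-1} \bigl(f_{\theta}(S^k x_w) - \ell(\theta, S^k x_w, T^k y)\bigr)\Bigr) + O(1/n),
\]
which can in turn be rewritten as an infimum, over $(S \times T)$-invariant joinings $\lambda$ of $\mu_{\theta}$ and $\nu$, of $\int \ell(\theta,\cdot,\cdot)\,d\lambda + D(\theta,\lambda)$, where $D(\theta,\lambda)$ is a nonnegative divergence quantifying the departure of the $\mathcal{X}$-marginal from $\mu_{\theta}$ (a relative-pressure / fiber-entropy gap). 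Weak-$^*$ compactness on joinings, together with Birkhoff's theorem applied to the $\nu$-integrable envelope $\ell^{\ast}$ from condition~(ii), gives a $\nu$-a.s. limit $V(\theta)$ that is independent of $y$ and lower semicontinuous in $\theta$: lower semicontinuity is inherited from the infimum form using weak-$^*$ continuity of $\theta \mapsto \mu_{\theta}$, continuity of $\ell$, upper semicontinuity of entropy, and continuity of $\theta \mapsto \mathcal{P}(f_{\theta})$.

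For the second stage I write $Z_n(y) = \int Z_n(\theta,y)\,d\pi_0(\theta)$ by Fubini and treat it as a Laplace integral on $\Theta$. For the upper bound on $-\tfrac{1}{n}\log Z_n(y)$, fix $\varepsilon > 0$ and choose a near-minimizer $\theta^{\ast}$ with $V(\theta^{\ast}) < \inf V + \varepsilon$; continuity of $\theta \mapsto f_{\theta}$ in $\|\cdot\|_r$, continuity of $\theta \mapsto \mathcal{P}(f_{\theta})$, the uniform Gibbs constant $K$, and the modulus of continuity condition (iii) together imply that on a small ball $B_{\delta}(\theta^{\ast})$ the integrand $Z_n(\theta,y)$ is within a factor $e^{\pm n\varepsilon}$ of $Z_n(\theta^{\ast},y)$, so $\pi_0(B_{\delta}(\theta^{\ast})) > 0$ (full support of $\pi_0$) yields $\limsup -\tfrac{1}{n}\log Z_n(y) \leq V(\theta^{\ast}) + 2\varepsilon$. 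For the matching lower bound I cover $\Theta$ by finitely many such small balls, apply the same uniform estimates to replace $Z_n(\theta,y)$ on each ball by an exponential at rate $V(\theta') - \varepsilon$ for some $\theta'$ in the ball, and sum over the cover to get $\liminf -\tfrac{1}{n}\log Z_n(y) \geq \inf V - \varepsilon$.

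The main obstacle I anticipate is obtaining enough uniformity in $\theta$: the fiberwise convergence $-\tfrac{1}{n}\log Z_n(\theta,y) \to V(\theta)$ holds $\nu$-a.s., but the exceptional $\nu$-null set could a priori depend on $\theta$, whereas the same $y$ must work simultaneously for all $\theta \in \Theta$. Exploiting compactness of $\Theta$ together with the regular family assumption (which makes the Gibbs constants and pressures uniform in $\theta$) and condition~(iii) (which gives a uniform modulus of continuity for the loss against $\nu$), I would first establish the fiberwise convergence along a countable dense subset of $\Theta$ outside a single $\nu$-null set, and then transfer to all of $\Theta$ by a continuity/approximation argument combined with the lower semicontinuity of $V$.
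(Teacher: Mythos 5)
Your second stage (the Laplace/covering argument over the compact $\Theta$, using the uniform Gibbs constants, continuity of $\theta \mapsto f_{\theta}$ and $\theta \mapsto \mathcal{P}(f_{\theta})$, condition (iii), and full support of $\pi_0$, with the null-set uniformity handled by finite covers or a countable dense set) is a reasonable and genuinely different architecture from the paper, which never works fiberwise: the paper treats $\Theta \times \mathcal{X}$ as a single system $R = \Id \times S$ and proves matching lower and upper bounds for $n^{-1}\log Z_n$ directly against joinings in $\mathcal{J}(R:\nu)$, recovering $\inf_{\theta} V(\theta)$ because ergodic joinings of $\Id \times S$ with $\nu$ sit over Dirac masses on $\Theta$. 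But your stage one contains the real gap. The statement that for fixed $\theta$, $-\tfrac{1}{n}\log \int \exp(-\ell_n(\theta,x,y))\,d\mu_{\theta}(x)$ converges $\nu$-a.s.\ to the joining variational expression $\inf_{\lambda}\{\int \ell\,d\lambda + D(\lambda:\mu_{\theta})\}$ is not an off-the-shelf ``pressure-type variational principle'': the weights $\exp(\sum_k (f_{\theta}-\ell)(S^kx, T^ky))$ depend on the random (quenched) orbit of $y$, so the classical variational principle (\ref{Eqn:VPTP}) does not apply, and invoking weak-$^*$ compactness of joinings plus Birkhoff for the envelope $\ell^*$ only controls error terms, not the identification of the limit. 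That identification is exactly the analytic heart of Theorem \ref{Thm:Pressure} (it is the theorem for a singleton $\Theta$), and it requires two separate arguments that your sketch omits entirely: a lower bound obtained by applying the Pressure Lemma to the disintegration $\lambda_y$ of each ergodic joining over finite partitions, combined with the Gibbs estimate on cylinder masses and the ergodic theorem (the paper's Lemmas \ref{Lemma:LBpartition}, \ref{Lemma:Sunshine}, \ref{Lemma:IntegralDisintegration}); and an upper bound obtained from the empirical measures built from the posterior (or, in your fiberwise version, from the tilted measures on $\mathcal{X}$), their a.s.\ tightness with limit points in $\mathcal{J}(S:\nu)$, subadditivity of entropy, and upper semicontinuity of fiber entropy. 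Without these, the core convergence is asserted rather than proved.

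Two smaller points. First, your lower semicontinuity argument for $V$ needs more than ``inherited from the infimum form'': a pointwise infimum of continuous functions is in general only upper semicontinuous, so you must use that the objective $(\theta,\lambda)\mapsto \int\ell\,d\lambda + D(\lambda:\mu_{\theta})$ is jointly lower semicontinuous (continuity of $\ell$, of $\theta\mapsto f_{\theta}$ and $\mathcal{P}(f_{\theta})$, and upper semicontinuity of the \emph{fiber} entropy $h^{\nu}$) and that the infimum is over the fixed compact set $\mathcal{J}(S:\nu)$; this is the content of Proposition \ref{Prop:LSC}. Second, in the covering step, comparing $Z_n(\theta,y)$ with $Z_n(\theta_i,y)$ for $\theta$ near $\theta_i$ requires comparing integrals against the mutually singular measures $\mu_{\theta}$ and $\mu_{\theta_i}$; this can be done through the uniform Gibbs property on length-$n$ cylinders together with condition (iii) to control the oscillation of $\ell_n$ on cylinders, but it should be spelled out, since it is where the regular-family hypothesis is genuinely used in your scheme.
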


\begin{remark} \label{Rmk:V}
The compactness of $\Theta$ and lower semicontinuity of $V$ ensure that the infimum in Theorem \ref{Thm:Pressure} is obtained.
The conclusion of the theorem is similar to a large deviations principle (see for example \cite{Dupuis2011}), with $V : \Theta \to \R$ playing the role of the rate function. For this reason, we refer to $V$ as the rate function in this setting. 
A detailed discussion of $V$ appears in Section \ref{Sect:IntroRateFunction}, where we show that $V$ can be expressed
as the sum of an expected loss term and a divergence term. 
\end{remark}

The variational expression that appears in Theorem \ref{Thm:Pressure} suggests that we focus on the (non-empty, compact) set of parameters $\theta$ that minimize this expression. Let
\begin{equation*}
\Theta_{\min} =  \argmin_{\theta \in \Theta} V(\theta).
\end{equation*}  
In our second main result, we establish that the Gibbs posterior distribution must concentrate around this set.

\begin{theorem} \label{Thm:PosteriorConvergence}
For any open neighborhood $U$ of $\Theta_{\min}$, for $\nu$-almost every $y \in \mathcal{Y}$, we have
\begin{equation*}
\lim_n \pi_n \bigl(\Theta \setminus U \mid y \bigr) = 0.
\end{equation*}
\end{theorem}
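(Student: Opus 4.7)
The plan is to express the posterior mass as a ratio of partition functions and estimate numerator and denominator separately using Theorem \ref{Thm:Pressure}. Set $K = \Theta \setminus U$, which is compact since $\Theta$ is compact and $U$ is open, and write
\begin{equation*}
\pi_n(K \mid y) \ = \ \frac{Z_n^K(y)}{Z_n(y)}, \qquad Z_n^K(y) \ := \ \int_K \int_\cX e^{-\ell_n(\theta,x,y)} \, d\mu_\theta(x) \, d\pi_0(\theta).
\end{equation*}
Let $V^* = \min_\theta V(\theta)$ and $V_K^* = \inf_{\theta \in K} V(\theta)$. Because $V$ is lower semicontinuous and $K$ is compact, $V_K^*$ is attained; since $K$ is disjoint from $\Theta_{\min}$, the defining argmin property gives $V_K^* > V^*$. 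Fix $\epsilon = (V_K^* - V^*)/3 > 0$. The strategy reduces to two estimates that, when divided, yield decay at exponential rate $\epsilon$.

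For the denominator, Theorem \ref{Thm:Pressure} directly provides that for $\nu$-a.e. $y$, eventually
\begin{equation*}
Z_n(y) \ \geq \ \exp\bigl( -n (V^* + \epsilon) \bigr).
\end{equation*}
For the numerator, the goal is to show that for $\nu$-a.e. $y$, eventually
\begin{equation*}
Z_n^K(y) \ \leq \ \exp\bigl( -n (V_K^* - \epsilon) \bigr).
\end{equation*}
Combining the two and taking the ratio yields $\pi_n(K \mid y) \leq \exp(-n\epsilon) \to 0$, as desired.

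To prove the numerator estimate, I would use a compactness-plus-covering argument. By the lower semicontinuity of $V$, for every $\theta_0 \in K$ there is an open neighborhood $W_{\theta_0}$ of $\theta_0$ in $\Theta$ on which $V > V(\theta_0) - \epsilon \geq V_K^* - \epsilon$. Extract a finite subcover $W_{\theta_1}, \dots, W_{\theta_m}$ of $K$. The bound then reduces to a local upper bound of the form
\begin{equation*}
\limsup_n \frac{1}{n} \log \int_{W_{\theta_i}} \int_\cX e^{-\ell_n(\theta,x,y)} \, d\mu_\theta(x) \, d\pi_0(\theta) \ \leq \ -(V(\theta_i) - \epsilon)
\end{equation*}
for each $i$, which is exactly the sort of estimate one would use in proving the ``upper bound'' half of Theorem \ref{Thm:Pressure} (the proposition labeled \ref{Prop:UB} in the excerpt); summing over $i$ loses only a $\log m$ factor that vanishes after dividing by $n$.

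The main obstacle is verifying the local upper bound above with care, since applying Theorem \ref{Thm:Pressure} verbatim to each $W_{\theta_i}$ would require treating it as a compact parameter space with a possibly non-fully-supported restriction of $\pi_0$. The hypothesis of full support enters the proof of Theorem \ref{Thm:Pressure} only in the lower bound on $Z_n$ (to guarantee the prior charges every neighborhood of the minimizing parameter), not in the upper bound, so the covering argument should go through once one extracts from Proposition \ref{Prop:UB} its natural local form. Aside from this bookkeeping, the proof is essentially immediate from Theorem \ref{Thm:Pressure}.
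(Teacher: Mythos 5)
Your proposal is correct and is essentially the paper's own argument: write $\pi_n(\Theta \setminus U \mid y)$ as a ratio of partition functions, obtain the denominator lower bound directly from Theorem \ref{Thm:Pressure}, and obtain the numerator upper bound from the pressure upper bound over the compact set $\Theta \setminus U$, where lower semicontinuity of $V$ plus compactness yields the strict gap $\inf_{\Theta \setminus U} V > \inf_{\Theta} V$ and hence exponential decay of the ratio. The only difference is that the paper dispenses with your finite covering by applying Theorem \ref{Thm:Pressure} once to $F = \Theta \setminus U$ equipped with the conditional prior $\pi_0(\cdot \mid F)$ (handling $\pi_0(F)=0$ trivially), and your observation that full support of the prior enters only the lower-bound half is precisely what legitimizes that step---the conditional prior need not be fully supported on $F$, but only the upper-bound half (Proposition \ref{Prop:UB}) is invoked there---so your covering argument, while sound, is dispensable.
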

In light of this result, it is possible to answer questions about Gibbs posterior consistency by analyzing the variational problem defining $\Theta_{\min}$. We illustrate this approach to posterior consistency in several applications (see Section \ref{Sect:Examples}).

\begin{remark}[Optimality of $\Theta_{\min}$]
One may wonder whether $\pi_n( \cdot \mid y)$ actually concentrates around a strict subset of $\Theta_{\min}$. 
Proposition \ref{Prop:TakeABite} addresses this question on the exponential scale. It states that if $U \subset \Theta$ is open and intersects $\Theta_{\min}$, then the posterior probability of $U$ cannot be exponentially small as $n$ tends to infinity, i.e., for $\nu$-almost every $y$, the quantity $n^{-1} \log \pi_n(U \mid y)$ tends to zero as $n$ tends to infinity. 
\end{remark}

\begin{remark}[Ground states and MAP] \label{Rmk:GroundStates} 
From a thermodynamic perspective, it is natural to introduce an inverse temperature parameter $\beta \in \R$ and consider the new loss function $\ell_\beta(\theta,x,y) = \beta \cdot \ell(\theta,x,y)$. In this setting, one would like to understand what happens as $\beta$ tends to infinity. 
In Section \ref{Sect:RateFunction}, we identify the limit of both $V$ and $\Theta_{\min}$ as $\beta$ tends to infinity in terms of variational problems considered in previous work \cite{McGoff2016variational}.

The use of an inverse temperature parameter has also been used in practice to perform maximum a posteriori (MAP) estimation.
MAP estimation is a common alternative to fully Bayesian inference that is used in both statistics and machine learning. It involves finding the parameter that is the posterior mode. The motivation for MAP estimation is often computational efficiency and the lack of a need for uncertainty quantification. The idea of adding an inverse temperature parameter ($\beta$)  to a Gibbs distribution for MAP estimation was introduced for Bayesian models in a seminal paper by Geman and Geman \cite{GemanGeman1984}, who also gave an annealing schedule to increase the inverse temperature with a provable guarantee for finding the posterior mode.
\end{remark}

\begin{remark}[Connections to penalization]
The formulation of Bayesian updating as a variational problem with an entropic penalty has been previously explored \cite{Bissiri2016,Zellner1988}, and these ideas are related to Jaynes' maximum entropy formulation of Bayesian inference \cite{Jaynes1968}. In both \cite{Bissiri2016} and \cite{Zellner1988}, posterior inference was formulated as follows:
given a loss function $\ell(\theta,x)$ and a prior $\pi$, the posterior distribution is
$$\pi(\theta \mid x) = \arg \min_{\mu} \biggl\{ \int_\theta \ell(\theta,x) \, d\mu(\theta) + d_{KL}(\mu, \pi) \biggr\},$$ 
where $d_{KL}(\mu, \pi)$ is the relative entropy between $\mu$ and $\pi$. The function being minimized above has close connections to the function $V(\theta)$ in Theorem \ref{Thm:Pressure}; see Definition \ref{Defn:V} below.
\end{remark}

\begin{remark}[Convergence of full Gibbs posteriors]
Our main results establish the concentration of the $\Theta$-marginal posterior distributions $\pi_n( \cdot \mid y)$ around the limit set $\Theta_{\min}$. In contrast, the $\mathcal{X}$-marginal of the full posterior distribution $P_n( \cdot \mid y)$ need not concentrate around any particular subset of $\mathcal{X}$ (according to the negative results of \cite{Lalley1999,LalleyNobel2006}). Nonetheless, Proposition \ref{Prop:CesaroConvergence} gives a characterization of any Ces\`{a}ro limit of the full posteriors.
\end{remark}

\begin{remark}[Importance of the Gibbs property]
The Gibbs property (\ref{Eqn:GibbsProp}) of the measures $\mu_{\theta}$ makes them particularly suitable as model distributions for the purposes of Gibbs posterior inference. In general, invariant measures for dynamical systems do not admit such exponential estimates, and it is precisely these estimates that allow us to carry out our asymptotic analysis.
\end{remark}

\begin{remark}[Continuous model systems]
Results similar to those here may be established for certain families of differential dynamical systems on manifolds. In particular, using our results and the well-known connections between SFTs and Axiom A systems (see \cite{Bowen1975}), it is possible to establish analogous conclusions for Axiom A diffeomorphisms with Gibbs measures. 
\end{remark}

\subsection{Examples of inference settings and associated loss functions} \label{Sect:ExampleLossFunctions}

Here we describe some possible inference settings that fit into our framework. Note that these settings give examples of loss functions that satisfy conditions (i)-(iii).

\begin{example}(Continuous, deterministic observations) \label{Ex:CtsObs}
Suppose that the state space $\mathcal{Y}$ of the observed system is a subset of the real line, so that 
the observations $y, Ty, T^2y, \ldots$ are real-valued and deterministic.  In this case, we might wish to
fit the observations to a family of models generated by a family $\{\mu_{\theta} : \theta \in \Theta\}$ of Gibbs 
measures on a fixed mixing SFT $\mathcal{X}$, with associated prior $\pi_0$, and a continuously parametrized family 
$\{\varphi_{\theta} : \mathcal{X} \to \R \}$ of continuous observation functions.  
Given $\theta$ and $x$, the initial part of the real-valued sequence $\{\varphi_{\theta}(S^kx)\}_{k \geq 0}$ can be fit to the 
the observations.  Models of this sort are called dynamical models, and they have been studied in the context of 
empirical risk minimization in \cite{McGoff2016empirical}. 
If the measure $\nu$ has finite second moment, and $\ell$ is the squared loss 
$\ell(\theta,x,y) = | \varphi_{\theta}(x) - y|^2$, then conditions (i)-(iii) on the loss are satisfied. 
\end{example}

\begin{example}(Discrete observations) \label{Ex:DiscreteObs}
Let $\mathcal{A}$ and $\mathcal{B}$ be finite sets. We suppose that we make $\mathcal{B}$-valued observations, that is, $\mathcal{Y} \subset \mathcal{B}^{\mathbb{Z}}$, and we wish to model these observations with a family of Gibbs measures $\{\mu_{\theta} : \theta \in \Theta\}$ on a mixing SFT $\mathcal{X}$ with $\mathcal{X} \subset \mathcal{A}^{\mathbb{Z}}$. Further, let $\varphi : \mathcal{A} \to \mathcal{B}$ be an observation function, so that a point $x$ in $\mathcal{X}$ gives rise to the $\mathcal{B}$-valued sequence $\{ \varphi( x_k) \}_{k \geq 0}$. Let $\ell$ be the discrete loss, $\ell(\theta,x,y) = \mathbf{1}( \varphi(x_0) \neq y_0 )$.  Then the conditions (i)-(iii) on the loss are satisfied.
\end{example}

\begin{example}(Family of conditional likelihoods) \label{Ex:ProperLikelihood}
Suppose that $\{p( \cdot \mid x, \theta) : \theta \in \Theta, x \in \mathcal{X}\}$ is a family of conditional densities on $\mathcal{Y}$ with respect to a common Borel measure $m$ on $\mathcal{Y}$. Here $p( \cdot \mid x, \theta)$ is the conditional likelihood of a single observation given the parameter $\theta$ and system state $x$. Under appropriate continuity and integrability conditions on the family of likelihoods, the negative log-likelihood function, $\ell(\theta,x,y) = - \log p(y \mid x, \theta)$, satisfies conditions (i)-(iii). In this situation, the Gibbs posterior is the same as the standard Bayes posterior. Furthermore, the dependence of the loss on the parameter $\theta$ allows one to parametrize the conditional observation densities, as in the parametrization of emission densities in the study of hidden Markov models. Note that in the Gibbs posterior framework, the true observation system $(\mathcal{Y},T,\nu)$ may be fully misspecified--it need not be related to any of the generative processes implied by the family of Gibbs measures and conditional likelihoods.
\end{example}

%

\section{Applications} \label{Sect:Examples}

In this section we present two applications of our main results on Gibbs posterior consistency to standard posterior consistency for two models.
In the first, we establish Bayesian posterior consistency for direct observations of Gibbs processes. Interestingly, the proof of this result may be reduced to a classical result of Bowen on uniqueness of equilibrium states. In the second application, we establish Bayesian posterior consistency for hidden Gibbs processes. This result generalizes previous results on posterior consistency for hidden Markov models by allowing substantially more dependence in the hidden processes, including families of Markov chains with unbounded orders.

\subsection{Direct observations of Gibbs processes} \label{Sect:DirectGibbs}

Let $\mathcal{Z}$ be a mixing SFT, and let $\{ f_{\theta} : \theta \in \Theta\} \subset C^r(\mathcal{Z})$ be a regular family of H\"{o}lder potential functions (as in Definition \ref{Def:RegularFamily}). We let $\mu_{\theta}$ be the unique Gibbs measure associated with $f_{\theta}$. 
Additionally, let $\Pi_0$ be a fully supported prior distribution on $\Theta$. 

Here we consider the properly specified situation with direct observations. That is, we suppose that there exists $\theta^* \in \Theta$ such that our observations $Y_0,Y_1,\dots$ are the coordinates of the ergodic system $(\mathcal{Z},\sigma|_{\mathcal{Z}},\mu_{\theta^*})$, observed without noise. 
In other words, the observation $Y_k$ at time $k \geq 0$ is the $k$th coordinate of the stationary ergodic process 
$\mathbf{Y} = \{Y_k\}_{k \in \mathbb{Z}} \in \mathcal{Z}$ with distribution $\mu_{\theta^*}$. 
In this case, the likelihood of observing $y_0 \dots y_{n-1}$ under $\theta$ is simply given by $\mu_{\theta}([y_0^{n-1}])$, where $[y_0^{n-1}] = \{ z \in \mathcal{Z} : z_0^{n-1} = y_0^{n-1} \}$.  
Let $\Pi_n( \cdot \mid y_0^{n-1})$ be the standard Bayesian posterior distribution on $\Theta$ given the observations $y_0^{n-1}$, 
i.e., for Borel sets $A \subset \Theta$,
\begin{equation*}
 \Pi_n(A \mid y_0^{n-1}) = \frac{ \int_A \mu_{\theta}([y_0^{n-1}]) \, d\Pi_0(\theta) }{ \int_{\Theta} \mu_{\theta}([y_0^{n-1}]) \, d\Pi_0(\theta) }.
\end{equation*}
Let $[\theta^*]$ denote the identifiability class of $\theta^*$, which is naturally defined as the set of all $\theta$ such that $\mu_{\theta} = \mu_{\theta^*}$. 
The following theorem states that $\Pi_n( \cdot \mid Y_0^{n-1})$ concentrates around $[\theta^*]$, establishing posterior consistency in this setting. 
\begin{theorem} \label{Thm:DirectGibbs}
Let $U \subset \Theta$ be an open neighborhood of $[\theta^*]$. Then with probability one (with respect to $\mu_{\theta^*}$), we have
\begin{equation*}
\lim_n \Pi_n \bigl( \Theta \setminus U \mid Y_0^{n-1} \bigr) = 0.
\end{equation*}
\end{theorem}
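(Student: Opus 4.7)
The plan is to realize the standard Bayesian posterior $\Pi_n$ as a Gibbs posterior in the sense of Section \ref{Sect:Inference}, apply Theorem \ref{Thm:PosteriorConvergence} to obtain concentration around the variational set $\Theta_{\min}$, and then use Bowen's uniqueness of equilibrium states to identify $\Theta_{\min}$ with $[\theta^*]$.

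First I would take $\mathcal{X} = \mathcal{Y} = \mathcal{Z}$, $S = T = \sigma|_{\mathcal{Z}}$, $\nu = \mu_{\theta^*}$, $\pi_0 = \Pi_0$, and introduce the loss function
$$\ell(\theta, x, y) = \mathcal{P}(f_\theta) - f_\theta(y),$$
which depends only on $\theta$ and $y$. Regularity of $\{f_\theta\}$ together with compactness of $\Theta$ and $\mathcal{Z}$ makes $\ell$ jointly continuous and uniformly bounded, so conditions (i)--(iii) of Section \ref{Sect:Inference} are immediate. Summing along a trajectory gives $\ell_n(\theta, x, y) = n \mathcal{P}(f_\theta) - \sum_{k=0}^{n-1} f_\theta(\sigma^k y)$, and applying the uniform Gibbs property (\ref{Eqn:UniformGibbs}) to $\mu_\theta$ on the cylinder $[y_0^{n-1}]$ yields
$$K^{-1} \exp(-\ell_n(\theta, x, y)) \ \leq \ \mu_\theta([y_0^{n-1}]) \ \leq \ K \exp(-\ell_n(\theta, x, y))$$
with $K$ independent of $\theta$ and $y$. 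Taking ratios of integrals against $\pi_0$ gives
$$K^{-2} \, \pi_n(A \mid y) \ \leq \ \Pi_n(A \mid y_0^{n-1}) \ \leq \ K^2 \, \pi_n(A \mid y)$$
for every Borel $A \subset \Theta$, where $\pi_n$ denotes the Gibbs posterior attached to $\ell$. Consequently, concentration of $\pi_n$ on neighborhoods of any set transfers directly to concentration of $\Pi_n$.

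I would then invoke Theorem \ref{Thm:PosteriorConvergence} to obtain $\pi_n(\Theta \setminus U \mid y) \to 0$ for $\mu_{\theta^*}$-almost every $y$ whenever $U$ is an open neighborhood of $\Theta_{\min}$. It remains to identify $\Theta_{\min} = [\theta^*]$. Because $\ell$ does not depend on $x$, the rate function $V$ of Theorem \ref{Thm:Pressure} should reduce to
$$V(\theta) = \mathcal{P}(f_\theta) - \int f_\theta \, d\mu_{\theta^*}.$$
Heuristically, the joining-based divergence term in Definition \ref{Defn:V} collapses because only the $\mathcal{Y}$-marginal of a candidate joining (which is pinned to $\mu_{\theta^*}$) is tested by $\ell$; pointwise, Birkhoff's ergodic theorem also gives $n^{-1}\ell_n(\theta, x, y) \to \mathcal{P}(f_\theta) - \int f_\theta\, d\mu_{\theta^*}$ for $\mu_{\theta^*}$-almost every $y$, which is consistent with this expression. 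By the classical variational principle for pressure, $\mathcal{P}(f_\theta) \geq h(\mu_{\theta^*}) + \int f_\theta\, d\mu_{\theta^*}$, so $V(\theta) \geq h(\mu_{\theta^*})$ with equality if and only if $\mu_{\theta^*}$ is an equilibrium state for $f_\theta$. By Bowen's uniqueness theorem for equilibrium states of H\"older potentials on mixing SFTs, this occurs exactly when $\mu_\theta = \mu_{\theta^*}$, i.e., when $\theta \in [\theta^*]$. Since equality is attained at $\theta = \theta^*$, we conclude $\Theta_{\min} = [\theta^*]$, which combined with the comparability of $\pi_n$ and $\Pi_n$ yields the theorem.

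The main obstacle, as I see it, is making rigorous the reduction of the general rate function $V$ to $\mathcal{P}(f_\theta) - \int f_\theta\, d\mu_{\theta^*}$, which requires checking that the joining/fiber-entropy divergence term in Definition \ref{Defn:V} collapses when the loss carries no $x$-dependence. Once this reduction is secured, the identification $\Theta_{\min} = [\theta^*]$ is an almost immediate consequence of the variational principle and Bowen's uniqueness of equilibrium states, which in this dynamical setting plays the role of an identifiability lemma.
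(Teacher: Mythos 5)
Your proposal is correct and follows the paper's strategy almost exactly: realize $\Pi_n$ as comparable (up to the uniform Gibbs constant $K^{\pm 2}$) to a Gibbs posterior for the loss $\ell(\theta,x,y)=\mathcal{P}(f_\theta)-f_\theta(y)$, invoke Theorem \ref{Thm:PosteriorConvergence}, and identify $\Theta_{\min}=[\theta^*]$ via the variational principle and Bowen's uniqueness; your comparability inequality is precisely the paper's Lemma \ref{Lemma:Equivalent}. The one structural difference is your choice of hidden space: the paper takes $\mathcal{X}$ to be the trivial one-point SFT (so $\mathcal{J}(S:\nu)$ is a single joining and the fiber entropy vanishes identically), whereas you take $\mathcal{X}=\mathcal{Z}$ with the full family $\{\mu_\theta\}$, which is why you must argue that the divergence term drops out of $V$. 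The step you flag as the ``main obstacle'' is in fact immediate and needs no heuristics or Birkhoff argument: since $\ell$ does not depend on $x$, for every $\lambda\in\mathcal{J}(S:\nu)$ one has $\int\ell\,d\lambda=\mathcal{P}(f_\theta)-\int f_\theta\,d\mu_{\theta^*}$ (the $\mathcal{Y}$-marginal is pinned to $\nu=\mu_{\theta^*}$), so
\begin{equation*}
V(\theta)=\mathcal{P}(f_\theta)-\int f_\theta\,d\mu_{\theta^*}+\inf_{\lambda\in\mathcal{J}(S:\nu)} D(\lambda:\mu_\theta),
\end{equation*}
and the infimum is $0$ because $D\geq 0$ by Lemma \ref{Lemma:JoiningsVP} while the product joining gives $D(\mu_\theta\otimes\nu:\mu_\theta)=\mathcal{P}(f_\theta)-h(\mu_\theta)-\int f_\theta\,d\mu_\theta=0$ by the variational principle (\ref{Eqn:VPTP}), using $h^{\nu}(\mu_\theta\otimes\nu)=h(\mu_\theta)$. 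With that two-line insertion your reduction $V(\theta)=\mathcal{P}(f_\theta)-\int f_\theta\,d\mu_{\theta^*}$ is rigorous (the paper's own bookkeeping with the trivial $\mathcal{X}$ yields $2\bigl(\mathcal{P}(f_\theta)-\int f_\theta\,d\mu_{\theta^*}\bigr)$, but the extra factor is irrelevant to the argmin), and the rest of your argument---$V(\theta)\geq h(\mu_{\theta^*})$ with equality iff $\mu_{\theta^*}$ is the equilibrium state of $f_\theta$, hence iff $\mu_\theta=\mu_{\theta^*}$ by Bowen---matches the paper's Lemma \ref{Lemma:Darkness}. So there is no genuine gap; your route just trades the paper's trivial hidden space for a short verification that the joining infimum of the divergence vanishes.
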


The proof of Theorem \ref{Thm:DirectGibbs} is based on the principal results above. 
In particular, we apply Theorem \ref{Thm:PosteriorConvergence} to establish that the posterior concentrates around a set $\Theta_{\min}$, which is characterized as the solution set of a variational problem. 
We then use the variational characterization of $\Theta_{\min}$ and additional problem-specific arguments to show that $\Theta_{\min}$ is equal to the identifiability class $[\theta^*]$. 
In this example, these problem-specific arguments rely on one of the foundational results of Bowen \cite{Bowen1975} 
in the thermodynamic formalism, namely the uniqueness of equilibrium states for H\"{o}lder continuous 
potentials on a mixing SFT. 

\subsection{Hidden Gibbs processes} \label{Sect:HiddenGibbs}

In this section we consider posterior consistency for more general observation processes. Let $\mathcal{X}$ be a mixing SFT, $\{ f_{\theta} : \theta \in \Theta\} \subset C^r(\mathcal{X})$  a regular family of H\"{o}lder potential functions (as in Definition \ref{Def:RegularFamily}), and $\{ \mu_{\theta} : \theta \in \Theta\}$  the corresponding family of Gibbs measures. Let $\Pi_0$ be any fully supported prior distribution on $\Theta$.

The novel feature of the present setting is that we allow for general observations of the underlying family. Suppose that $m$ is a $\sigma$-finite Borel measure on a complete separable metric space $\mathcal{U}$, and that $\varphi : \Theta \times \mathcal{X} \times \mathcal{U} \to [0,\infty)$ is a jointly continuous function such that for all $\theta \in \Theta$ and $x \in \mathcal{X}$,
\begin{equation*}
\int \varphi_{\theta}( u \mid x) \, dm(u) = 1.
\end{equation*}
We regard $\{ \varphi_{\theta}( \cdot \mid x) : \theta \in \Theta, \, x \in \mathcal{X} \}$ as a family of conditional likelihoods for $u \in \mathcal{U}$ given $\theta$ and $x$.
We assume that the function $L : \Theta \times \mathcal{X} \times \mathcal{U} \to \R$ given by $L(\theta,x,u) = - \log \varphi_{\theta}(u \mid x)$ satisfies the integrability and regularity conditions (i)-(iii) from Section \ref{Sect:Inference}. Furthermore, we require condition (L2) from \cite{McGoff2015}, which stipulates that there exists $\alpha >0$ and a Borel measurable function $C : \Theta \times \mathcal{U} \to [0,\infty)$ such that for each $(\theta,u) \in \Theta \times \mathcal{U}$, the function $L(\theta, \cdot, u) : \mathcal{X} \to \R$ is $\alpha$-H\"{o}lder continuous with constant $C(\theta,u)$, and for each $\beta >0$,
\begin{equation*}
\sup_{(\theta,x) \in \Theta \times \mathcal{X}} \int \exp\bigl( \beta C(\theta,u) \bigr) \varphi_{\theta}(u \mid x) \, dm(u) < \infty.
\end{equation*}
This condition may be viewed as a condition on the regularity of the conditional density functions; it is used in \cite{McGoff2015} to control the likelihood function in the large deviations regime.

With these conditions in place, we assume that the conditional likelihood of observing $u_0^{n-1} \in \mathcal{U}^{n}$ given $(\theta,x) \in \Theta \times \mathcal{X}$ is
\begin{equation*}
p_{\theta}\bigl(u_0^{n-1} \mid x\bigr) = \prod_{k=0}^{n-1} \varphi_{\theta}\bigl(u_k \mid S^kx \bigr),
\end{equation*}
and that the likelihood of observing $u_0^{n-1} \in \mathcal{U}^n$ given $\theta \in \Theta$ is
\begin{equation*}
p_{\theta}\bigl(u_0^{n-1}\bigr) = \int p_{\theta}\bigl( u_0^{n-1} \mid x\bigr) \, d\mu_{\theta}(x).
\end{equation*}
In other words, for each $\theta \in \Theta$, we have an observed sequence $U_0, U_1, \ldots$ generated as follows: select
$X \in \mathcal{X}$ according to $\mu_{\theta}$ and for each $k \geq 0$ let $U_k \in \mathcal{U}$ have density 
$\varphi_{\theta}( \cdot \mid S^k X)$ with respect to $m$. 
Denote by $\mathbb{P}^U_{\theta}$ the process measure for the process $\{U_k\}$, which has likelihood $p_{\theta}$.

Now let $\Pi_n( \cdot \mid u_0^{n-1})$ be the standard Bayesian posterior distribution on $\Theta$ 
given observations $u_0^{n-1}$ based on the prior $\Pi_0$ and the likelihood $p_{\theta}$:
for Borel sets $E \subset \Theta$,
\begin{equation*}
\Pi_n( E \mid u_0^{n-1} ) = \frac{\int_{E} p_{\theta}(u_0^{n-1}) \, d\Pi_0(\theta)}{ \int_{\Theta} p_{\theta}(u_0^{n-1}) \, d\Pi_0(\theta)}.
\end{equation*}

We again consider the properly specified case, in which there exists a parameter 
$\theta^* \in \Theta$ such that the observed process $\{Y_n\}$ is drawn from $\mathbb{P}_{\theta^*}^{U}$. 
In order to address posterior consistency, we define the identifiability class of $\theta^*$, denoted $[\theta^*]$, to 
be the set of $\theta \in \Theta$ such that $\mathbb{P}^U_{\theta} = \mathbb{P}^U_{\theta^*}$; in other words, 
a parameter is in $[\theta^*]$ if its associated process has the same distribution as the process 
generated by $\theta^*$. The following result establishes posterior consistency in this setting.

\begin{theorem} \label{Thm:HiddenGibbs}
Let $E \subset \Theta$ be an open neighborhood of $[\theta^*]$. Then 
\begin{equation*}
\lim_n \Pi_n \bigl( \Theta \setminus E \mid Y_0^{n-1} \bigr) = 0, \quad \quad \mathbb{P}^U_{\theta^*}-\text{a.s.}
\end{equation*}
\end{theorem}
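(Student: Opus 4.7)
The plan is to identify the standard Bayesian posterior $\Pi_n$ as the $\Theta$-marginal of a Gibbs posterior in our framework, apply Theorem \ref{Thm:PosteriorConvergence} to obtain concentration on $\Theta_{\min}$, and then use large deviations machinery to show $\Theta_{\min} = [\theta^*]$. For the reduction, I take the loss $\ell(\theta,x,y) = L(\theta,x,y_0) = -\log \varphi_{\theta}(y_0 \mid x)$, viewing the observation system as the ergodic system $(\mathcal{U}^{\mathbb{Z}}, T, \mathbb{P}^U_{\theta^*})$ where $T$ is the shift. Conditions (i)--(iii) on $\ell$ follow from the regularity assumed on $L$. The key observation is
\begin{equation*}
\int \exp\bigl(-\ell_n(\theta,x,y)\bigr) \, d\mu_{\theta}(x) \ = \ \int \prod_{k=0}^{n-1} \varphi_{\theta}(y_k \mid S^k x) \, d\mu_{\theta}(x) \ = \ p_{\theta}(y_0^{n-1}),
\end{equation*}
which gives that the $\Theta$-marginal Gibbs posterior $\pi_n(\cdot \mid y)$ agrees exactly with $\Pi_n(\cdot \mid y_0^{n-1})$. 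Theorem \ref{Thm:PosteriorConvergence} then yields $\Pi_n(\Theta \setminus U \mid Y_0^{n-1}) \to 0$ almost surely for every open neighborhood $U$ of $\Theta_{\min}$.

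The remaining task is to show $\Theta_{\min} = [\theta^*]$. I would do this by establishing the pointwise identification $V(\theta) = -\lim_n n^{-1}\log p_{\theta}(Y_0^{n-1})$ almost surely, using the large deviations principle for the Gibbs measures $\mu_{\theta}$ from \cite{Young1990} together with the likelihood large deviations results of \cite{McGoff2015}, whose hypotheses are guaranteed by condition (L2). Once this is in hand, decomposing
\begin{equation*}
-\tfrac{1}{n}\log p_{\theta}(Y_0^{n-1}) \ = \ -\tfrac{1}{n}\log p_{\theta^*}(Y_0^{n-1}) + \tfrac{1}{n}\log \frac{p_{\theta^*}(Y_0^{n-1})}{p_{\theta}(Y_0^{n-1})}
\end{equation*}
and applying Barron's a.s.\ generalization of Shannon--McMillan--Breiman to the ergodic measure $\mathbb{P}^U_{\theta^*}$ yields $V(\theta) - V(\theta^*) = d(\mathbb{P}^U_{\theta^*} \dmid \mathbb{P}^U_{\theta})$, the relative entropy rate. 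This quantity is nonnegative, and it vanishes precisely when $\mathbb{P}^U_{\theta} = \mathbb{P}^U_{\theta^*}$, i.e.\ when $\theta \in [\theta^*]$. Hence $\Theta_{\min} = [\theta^*]$, completing the proof.

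The main obstacle I anticipate is the pointwise identification of $V(\theta)$. The rate function $V$ in Theorem \ref{Thm:Pressure} is defined abstractly through a variational problem over joinings, while $-n^{-1}\log p_{\theta}(Y_0^{n-1})$ is an annealed quantity arising from integrating the quenched likelihood $p_{\theta}(y_0^{n-1}\mid x)$ against $\mu_{\theta}$. Matching them requires uniform-in-$x$ large deviations estimates, and this is exactly where the H\"{o}lder control in condition (L2) and the machinery of \cite{McGoff2015} become essential; showing that these ingredients deliver the joining-based variational formula for $V(\theta)$ at each fixed $\theta$ is the technical core of the argument.
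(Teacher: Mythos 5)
Your reduction to the general framework is exactly the paper's: the loss $\ell(\theta,x,y)=-\log\varphi_{\theta}(y_0\mid x)$ on the shift space of observation sequences with $\nu=\mathbb{P}^U_{\theta^*}$, the identity $\int\exp(-\ell_n(\theta,x,y))\,d\mu_{\theta}(x)=p_{\theta}(y_0^{n-1})$ making $\pi_n(\cdot\mid y)=\Pi_n(\cdot\mid y_0^{n-1})$, and Theorem \ref{Thm:PosteriorConvergence} giving concentration on $\Theta_{\min}$. The gap is in the second half, where you argue $\Theta_{\min}=[\theta^*]$ by writing $V(\theta)-V(\theta^*)$ as the relative entropy rate $d(\mathbb{P}^U_{\theta^*}\dmid\mathbb{P}^U_{\theta})$ and asserting that this rate ``vanishes precisely when $\mathbb{P}^U_{\theta}=\mathbb{P}^U_{\theta^*}$.'' Nonnegativity is fine, but the converse direction is not a general fact about stationary ergodic processes: two distinct stationary ergodic laws can have zero relative entropy rate (the divergence of the $n$-dimensional marginals may grow sublinearly), and the laws $\mathbb{P}^U_{\theta}$ here are neither i.i.d.\ nor Markov of any finite order, so no elementary argument gives strict positivity off $[\theta^*]$. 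This strict positivity is precisely the nontrivial identifiability step, and it is where the paper invokes the heavy machinery: the large deviations principle for $\mu_{\theta}$ from \cite{Young1990} verifies condition (L1) of \cite{McGoff2015}, (L2) is assumed, and Propositions 4.3 and 6.4 of \cite{McGoff2015} then yield $V(\theta^*)<\lim_n-\frac1n\mathbb{E}_{\theta^*}[\log p_{\theta}(Y_0^{n-1})]$ for every $\theta\notin[\theta^*]$ (the paper's Lemma \ref{Lemma:Watermelon}). In your proposal these references are cited, but for the wrong step, and the step they are actually needed for is treated as free.

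Relatedly, you locate the ``technical core'' in the pointwise identification $V(\theta)=-\lim_n n^{-1}\log p_{\theta}(Y_0^{n-1})$ and suggest it requires uniform-in-$x$ large deviations estimates. It does not: for fixed $\theta$ the quantity $-n^{-1}\log\int\exp(-\ell_n(\theta,x,y))\,d\mu_{\theta}(x)$ is the normalized log partition function for the singleton parameter set $\{\theta\}$, so Theorem \ref{Thm:Pressure} applied with that restricted parameter space gives the a.s.\ limit $V(\theta)$ directly; the paper then upgrades this to the expectation version via a domination bound by $n^{-1}\sum_k\ell^*(T^ky)$ and generalized dominated convergence (Lemmas \ref{Lemma:Mango} and \ref{Lemma:Peach}). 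Your appeal to Barron's almost-sure Shannon--McMillan--Breiman theorem for the likelihood-ratio term is also not justified as stated, since that result requires the dominating measure to be Markov (or to satisfy comparable conditions), which $\mathbb{P}^U_{\theta}$ need not be; in any case the a.s.\ limits of both terms already exist by Theorem \ref{Thm:Pressure}, so what is missing is not the existence of the limit but the strict separation of $V(\theta)$ from $V(\theta^*)$ off the identifiability class, which your argument does not supply.
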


The proof of Theorem \ref{Thm:HiddenGibbs} is based on the principal results above. 
In particular, we use these results to establish convergence of the posterior distribution, 
and then we use problem specific arguments to prove that the limit set $\Theta_{\min}$ 
is equal to the identifiability set $[\theta^*]$. In this case, the problem-specific arguments rely 
on previously studied connections between large deviations for Gibbs measures and identifiability 
of observed systems \cite{McGoff2015}.

\section{Joinings, divergence, and the rate function} \label{Sect:IntroRateFunction}

In this section we discuss the rate function $V : \Theta \to \R$, whose existence is asserted by Theorem \ref{Thm:Pressure}. In order to provide a thorough discussion, we first recall some background material from ergodic theory, including joinings and fiber entropy.




\subsection{Joinings} \label{Sect:Joinings}

Joinings were introduced by Furstenberg \cite{Furstenberg1967}, and they have played an important role in the development of ergodic theory (see \cite{Rue2006,Glasner2003}).
Suppose $(\mathcal{U}_0,R_0,\eta_0)$ and $(\mathcal{U}_1,R_1,\eta_1)$ are two probability measure-preserving Borel systems with $R_i : \mathcal{U}_i \to \mathcal{U}_i$ and $\eta_i \in \mathcal{M}(\mathcal{U}_i,R_i)$. The product transformation $R_0 \times R_1 : \mathcal{U}_0 \times \mathcal{U}_1 \to \mathcal{U}_0 \times \mathcal{U}_1$ is defined by $(R_0 \times R_1)(u,v) = (R_0(u), R_1(v))$. A \textit{joining} of these two systems is a Borel probability measure $\lambda$ on $\mathcal{U}_0 \times \mathcal{U}_1$ with marginal distributions $\eta_0$ and $\eta_1$ that is invariant under the product transformation $R_0 \times R_1$. 
Thus, a joining is a coupling of the measures $\eta_0$ and $\eta_1$ that is also invariant under the joint action of the transformations
$R_0$ and $R_1$; the former condition concerns the invariant measures of the two systems, while the latter concerns
their dynamics.  
Let $\mathcal{J}(\eta_0,\eta_1)$ denote the set of all joinings of $(\mathcal{U}_0,R_0,\eta_0)$ and $(\mathcal{U}_1,R_1,\eta_1)$. Note that this set is non-empty, since the product measure $\eta_0 \otimes \eta_1$ is always a joining.  
When the transformation $R_0 : \mathcal{U}_0 \to \mathcal{U}_0$ is fixed but we have not associated any invariant measure with it, we set
\begin{equation*}
\mathcal{J}( R_0 : \eta_1) = \bigcup_{\eta_0 \in \mathcal{M}(\mathcal{U}_0,R_0)} \mathcal{J}( \eta_0, \eta_1),
\end{equation*}
which is the family of joinings of $(\mathcal{U}_1,R_1,\eta_1)$ with all systems of the form $(\mathcal{U}_0,R_0,\eta_0)$, with $\eta_0 \in \mathcal{M}(\mathcal{U}_0,R_0)$.

\subsection{Entropy} \label{Sect:Entropy}

Our statements and proofs also require us to introduce some notions from the entropy theory of dynamical systems.
Let $\mathcal{U}$ be a compact metric space, $R : \mathcal{U} \to \mathcal{U}$ continuous, and $\eta \in \mathcal{M}(\mathcal{U},R)$.
For any finite measurable partition $\alpha$ of $\mathcal{U}$, we define
\begin{equation*}
H(\eta,\alpha) = - \sum_{C \in \xi} \eta(C) \log \eta(C),
\end{equation*}
where $0 \cdot \log 0 = 0$ by convention. For $k \geq 0$, let $R^{-k} \alpha = \{ R^{-k} A : A \in \alpha\}$, and for any partitions $\alpha^{0},\dots,\alpha^n$, define their join to be the mutual refinement
\begin{equation*}
\bigvee_{k=0}^{n} \alpha^k = \bigl\{ A_0 \cap \dots \cap A_n : A_i \in \alpha^i \}.
\end{equation*} 
For $n \geq 0$, let $\alpha_n = \bigvee_{k=0}^{n-1} R^{-k} \alpha$. 
By standard subadditivity arguments, the following limit exists:
\begin{equation*}
h_R(\eta,\alpha) := \lim_n \frac{1}{n} H(\eta, \alpha_n) = \inf_n \frac{1}{n} H(\eta, \alpha_n).
\end{equation*}
The measure-theoretic or Kolmogorov-Sinai entropy of $(\mathcal{U},R)$ with respect to $\eta$ is given by $h_R(\eta) = \sup_{\alpha} h_R(\eta,\alpha)$, where the supremum is taken over all finite measurable partitions $\alpha$ of $\mathcal{U}$.
We note for future reference that for any $\epsilon>0$, the value $h_R(\eta)$ remains the same if the supremum is instead taken over all finite measurable partitions with diameter less than $\epsilon$.
When the transformation $R$ is clear from context, we may omit the subscript.

\subsection{The variational principle for pressure} \label{Sect:VP}

Let $\mathcal{X}$ be a mixing SFT, and let $f : \mathcal{X} \to \mathbb{R}$ be a H\"{o}lder continuous potential. The variational principle  \cite{Bowen1975} for the pressure $\mathcal{P}(f)$ states that
\begin{equation}
\label{Eqn:VPTP}
 \mathcal{P}(f) = \sup \biggl\{ \int f \, d\mu + h(\mu) : \mu \in \mathcal{M}(\mathcal{X},S) \biggr\},
\end{equation}
and furthermore, the supremum is achieved by the measure $\mu \in \mathcal{M}(\mathcal{X},S)$ if and only if $\mu$ is the Gibbs measures associated with $f$.

\subsection{Disintegration of measure} \label{Sect:DisintegrationOfMeasure}
The following result is a special case of standard results on disintegration of Borel measures (see \cite{Glasner2003}).

\begin{theorem*}[Disintegration of measure] \label{Thm:Disintegration}
Let $\cU$ and $\cY$ be standard Borel spaces, and $\phi : \cU \times \cY \to \cY$ be the natural projection. Let $\lambda \in \cM(\cU \times \cY)$, and let $\nu = \lambda \circ \phi^{-1}$ be its image in $\cM(\cY)$. Then there is a Borel map $y \mapsto \lambda_y$, from $\cY$ to $\cM(\cU)$ such that
for every bounded Borel function $f: \cU \times \cY \to \R$,
\begin{equation*}
\int f d\lambda = \int \biggl( \int f \, d[\lambda_y \otimes \delta_y] \biggr) \, d\nu(y).
\end{equation*}
Moreover, such a map is unique in the following sense: if $y \mapsto \lambda'_y$ is another such map, then $\lambda_y = \lambda'_y$ for $\nu$-almost every $y$.
\end{theorem*}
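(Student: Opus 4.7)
The plan is to establish existence of the disintegration via regular conditional probabilities, exploiting the standard Borel structure of $\mathcal{U}$, and then bootstrap from indicators of measurable rectangles to arbitrary bounded Borel functions via a monotone class argument. Uniqueness will come for free from the uniqueness of conditional expectations.

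First I would fix a countable algebra $\mathcal{C} = \{C_1, C_2, \dots\}$ of Borel subsets of $\mathcal{U}$ that generates the Borel $\sigma$-algebra of $\mathcal{U}$; such a $\mathcal{C}$ exists because $\mathcal{U}$ is standard Borel, so we may assume $\mathcal{U}$ is a Borel subset of a compact metric space and take $\mathcal{C}$ to be generated by a countable basis. For each $C \in \mathcal{C}$, let $g_C : \mathcal{Y} \to [0,1]$ be a Borel representative of the conditional expectation $\mathbb{E}_\lambda[\mathbf{1}_{C \times \mathcal{Y}} \mid \phi^{-1}(\mathcal{B}_\mathcal{Y})]$; concretely, $g_C$ is the Radon--Nikodym derivative $d(\lambda(\,\cdot\, \times \cdot\,) \restriction C \times \mathcal{Y}) / d\nu$ interpreted in the correct way. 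By construction, for every Borel $A \subset \mathcal{Y}$,
\begin{equation*}
\lambda(C \times A) = \int_A g_C(y) \, d\nu(y).
\end{equation*}

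Next I would show that there is a $\nu$-conull set $\mathcal{Y}_0 \subset \mathcal{Y}$ on which the family $\{g_C(y) : C \in \mathcal{C}\}$ is simultaneously a finitely additive, normalized, monotone set function on $\mathcal{C}$. Each of the countably many defining identities (finite additivity on disjoint pairs, monotonicity, $g_\mathcal{U}(y) = 1$, $g_\varnothing(y) = 0$) fails on at most a $\nu$-null set, and removing their countable union yields $\mathcal{Y}_0$. To upgrade finite additivity to countable additivity, I would use the regularity properties available in the standard Borel setting: by choosing $\mathcal{C}$ compatible with a compact metric realization of $\mathcal{U}$, countable additivity at $\varnothing$ reduces to a monotone continuity statement that again fails only on a countable union of null sets. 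On $\mathcal{Y}_0$ the set function extends (by Carathéodory) to a Borel probability measure $\lambda_y$ on $\mathcal{U}$; off $\mathcal{Y}_0$ define $\lambda_y$ arbitrarily (say, a fixed probability measure). Borel measurability of $y \mapsto \lambda_y$ follows because $y \mapsto \lambda_y(C)$ is Borel for each $C$ in the generating algebra $\mathcal{C}$, and the class of Borel sets $B$ for which $y \mapsto \lambda_y(B)$ is Borel is a monotone class containing $\mathcal{C}$.

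Having produced the family $\{\lambda_y\}$, I would verify the integral formula first on indicators $f = \mathbf{1}_{C \times A}$ with $C \in \mathcal{C}$ and $A \subset \mathcal{Y}$ Borel, where it reduces to the defining Radon--Nikodym identity. A monotone class / standard machine argument then extends the identity to indicators of all Borel sets in $\mathcal{U} \times \mathcal{Y}$, and by linearity and the monotone convergence theorem to all bounded Borel $f$. For uniqueness, if $y \mapsto \lambda'_y$ is another such disintegration, then for every $C \in \mathcal{C}$ and every Borel $A \subset \mathcal{Y}$,
\begin{equation*}
\int_A \lambda_y(C) \, d\nu(y) = \lambda(C \times A) = \int_A \lambda'_y(C) \, d\nu(y),
\end{equation*}
so $\lambda_y(C) = \lambda'_y(C)$ for $\nu$-a.e.\ $y$; intersecting the countably many $\nu$-conull sets indexed by $C \in \mathcal{C}$ gives a $\nu$-conull set on which the two measures agree on the generating algebra, hence coincide.

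The main obstacle is the upgrade from finitely additive conditional set functions to honest countably additive Borel probability measures on $\mathcal{U}$ for $\nu$-almost every $y$; this is precisely where the standard Borel hypothesis on $\mathcal{U}$ is indispensable, since in general finitely additive conditional probabilities need not extend. Everything else is essentially bookkeeping on null sets and an application of the monotone class theorem.
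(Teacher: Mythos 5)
The paper does not actually prove this statement: it is quoted as a special case of standard disintegration results and attributed to the ergodic theory literature (Glasner's book), so your blind write-up is being compared against a citation rather than an argument. Your route is the classical construction of regular conditional probabilities, and in outline it is correct: Radon--Nikodym derivatives $g_C$ for $C$ in a countable generating algebra, removal of countably many null sets to get a finitely additive normalized set function, Carath\'eodory extension, measurability of $y \mapsto \lambda_y$ via a monotone class argument, verification of the integral identity on rectangles $C \times A$ followed by the standard machine, and uniqueness from a.e.\ agreement on the countable algebra. This is exactly what the cited standard results encapsulate, so you are not doing anything conceptually different from the paper --- you are supplying the proof it outsources.

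One step, however, is stated too loosely to stand as written: the upgrade from finite to countable additivity. You say that ``countable additivity at $\varnothing$ reduces to a monotone continuity statement that again fails only on a countable union of null sets,'' but monotone continuity at $\varnothing$ quantifies over \emph{all} decreasing sequences in $\mathcal{C}$ with empty intersection, and there are uncountably many of these, so you cannot discard a null set per sequence. The correct repair --- which you gesture at with ``compact metric realization'' and flag as the main obstacle --- is the compact class argument: using tightness of the $\cU$-marginal of $\lambda$ on a standard Borel space, choose for each $C \in \mathcal{C}$ a countable family of compact sets $K_{C,j} \subset C$ with marginal measure of $C \setminus K_{C,j}$ summably small; then off a single null set every $\lambda_y$ restricted to $\mathcal{C}$ is inner regular with respect to the countable compact class generated by the $K_{C,j}$, and a finitely additive set function on an algebra that is inner regular with respect to a compact class is automatically countably additive. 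With that lemma inserted (and the routine check that the extension charges $\cU$ itself with full mass when $\cU$ is realized as a Borel subset of a compact metric space), your argument is complete and matches the standard proof the paper invokes by reference.
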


Note that if $\lambda$ is a joining, then the family $\{\lambda_y\}_{y \in \mathcal{Y}}$ satisfies an important invariance property, which we state as Lemma \ref{Lemma:InvarianceOfDisintegration} in Section \ref{Sect:DisintegrationResults}.

\subsection{Fiber entropy} \label{Sect:FiberEntropy}

Now we give a definition of fiber entropy, along with statements of some properties relevant to this work;  for a thorough introduction, see \cite{Kifer2006}.
Let $\cU$ be a compact metric space and $\cY$ be a separable complete metric spaces. Further, let $R : \cU \to \cU$ be continuous and $T : \cY \to \cY$ be Borel measurable.
For any Borel probability measure $\lambda$ on $\mathcal{U} \times \cY$ with $\cY$-marginal $\nu$, let $\lambda = \int \lambda_y \otimes \delta_y \, d\nu(y)$ be its disintegration over $\cY$. Then for any finite measurable partition $\alpha$ of $\mathcal{U}$, we define
\begin{equation} \label{Eqn:CondEnt}
H(\lambda, \alpha \mid \cY) = \int H(\lambda_y, \alpha) \, d\nu(y).
\end{equation}

Now suppose $\nu \in \mathcal{M}(\cY,T)$.
It's possible to show (see, e.g., \cite{Kifer2001}) that if $\lambda \in \cJ(R: \nu)$ 
and $\lambda = \int \lambda_y \otimes \delta_y \, d\nu(y)$ is its disintegration over $\nu$, then for every finite measurable partition $\alpha$ of $\mathcal{U}$ the following limit exists:
\begin{equation*}
h^{\nu}(\lambda,\alpha) \, := \, 
\lim_n  \frac{1}{n} H(\lambda, \alpha_n \mid \cY) \, = \, \inf_n \frac{1}{n} H( \lambda, \alpha_n \mid \cY) \, d\nu(y),
\end{equation*}
where $\alpha_n = \bigvee_{k=0}^{n-1} R^{-k} \alpha$.
Furthermore, when $\lambda$ is ergodic, it can be shown (again see \cite{Kifer2001}) that 
for $\nu$ almost every $y$,
\begin{equation*}
h^{\nu}(\lambda,\alpha) = \lim_n \frac{1}{n} H(\lambda_y,\alpha_n).
\end{equation*}
The {\it fiber entropy} of $\lambda$ over $\nu$ is defined as $h^{\nu}(\lambda) = \sup_{\alpha} h^{\nu}(\lambda,\alpha)$, where the supremum is taken over all finite measurable partitions $\alpha$ of $\mathcal{U}$. 
Note that the supremum may also be taken over partitions with diameter less than any $\epsilon >0$.
The fiber entropy $h^{\nu}(\lambda)$ quantifies the relative entropy of $\lambda$ over $\nu$. 

\subsection{Divergence terms} \label{Sect:DivergenceTerms}

Consider a parameter $\theta \in \Theta$ and a joining $\lambda \in \mathcal{J}(S : \nu)$. We would like to quantify the divergence of the joining $\lambda$ to the product measure $\mu_{\theta} \otimes \nu$, as it will play a role in the rate function $V$.  (Note that the measure $\mu_{\theta} \otimes \nu$ may be interpreted as a prior distribution on $\mathcal{X} \times \mathcal{Y}$ given $\theta$, as the prior on $\mathcal{X}$ is assumed to be independent of the observations.) However, the standard KL-divergence is insufficient for our purposes, since any two ergodic measures for a given system are known to be mutually singular, and hence their KL-divergence will be infinite. Instead, we make the following definitions, which are more suitable for dynamical systems.

Given two Borel probability measures $\eta$ and $\gamma$ on a compact metric space $\mathcal{U}$ and a finite measurable partition $\alpha$ of $\mathcal{U}$, we write $\eta \prec_{\alpha} \gamma$ whenever $\gamma(C) =0$ implies that $\eta(C) = 0$ for $C \in \alpha$. Let
\begin{equation*}
KL( \eta : \gamma \mid \alpha) = 
\left\{ \begin{array}{ll} \sum_{C \in \alpha} \eta(C) \log\frac{\eta(C)}{\gamma(C)}, & \text{ if } \eta \prec_{\alpha} \gamma \\
                                                                                 +\infty, & \text{ otherwise},
\end{array} \right.
\end{equation*}
where $0 \cdot \log \frac{0}{x} = 0$ for any $x$ by convention.
Note that $KL(\eta : \gamma \mid \alpha)$ is the KL-divergence from $\gamma$ to $\eta$ with respect to the partition $\alpha$, which is nonnegative.


Now consider a H\"{o}lder continuous potential $f : \mathcal{X} \to \R$ on a mixing SFT $\mathcal{X}$ with associated Gibbs measure $\mu \in \mathcal{M}(\mathcal{X},S)$. Let $\alpha$ be the partition of $\mathcal{X}$ into cylinder sets of the form $x[0]$ for some 
$x \in \mathcal{X}$, and let $\eta \in \mathcal{M}(\mathcal{X},S)$ be ergodic. 
In this situation, it is known \cite{Chazottes1998} that 
\begin{equation*}
\lim_n \frac{1}{n}KL( \eta : \mu \mid \alpha_n) = \mathcal{P}(f) - \biggl( h(\eta) + \int f \, d\eta \biggr),
\end{equation*}
where we recall that $\mathcal{P}(f)$ is the pressure of $f$, the partition $\alpha_n$ is defined to be $\bigvee_{k=0}^{n-1} S^{-k} \alpha$, and $h(\eta)$ is the entropy of $\eta$ with respect to $S$.
Next we generalize this result to handle the relative situation, which involves joinings and relative entropy. 

\begin{lemma} \label{Lemma:Dinterpretation}
Let $f : \mathcal{X} \to \R$ be a H\"{o}lder continuous potential on a mixing SFT $\mathcal{X}$ with associated Gibbs measure $\mu$. Let $\alpha$ be the partition of $\mathcal{X}$ into cylinder sets of the form $x[0]$, and let $\lambda \in \mathcal{J}(S : \nu)$ be ergodic. Then for $\nu$-almost every $y \in \mathcal{Y}$,
\begin{equation*}
\lim_n \frac{1}{n} KL( \lambda_y : \mu \mid \alpha_n) = \mathcal{P}(f) - \biggl( h^{\nu}(\lambda) + \int f \, d\lambda \biggr).
\end{equation*}
\end{lemma}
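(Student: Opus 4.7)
The plan is to expand the relative entropy and analyze each resulting term asymptotically. Writing
\[
KL(\lambda_y : \mu \mid \alpha_n) \ = \ -H(\lambda_y, \alpha_n) \ - \ \sum_{C \in \alpha_n} \lambda_y(C) \log \mu(C),
\]
I would treat the two summands separately, with the goal of showing that, after dividing by $n$, the first tends to $-h^{\nu}(\lambda)$ and the second tends to $\mathcal{P}(f) - \int f \, d\lambda$ for $\nu$-almost every $y$.

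For the entropy term, the fiber Shannon--McMillan--Breiman-type identity recalled in Section \ref{Sect:FiberEntropy} gives $\tfrac{1}{n} H(\lambda_y, \alpha_n) \to h^{\nu}(\lambda, \alpha)$ for $\nu$-a.e.\ $y$ because $\lambda$ is ergodic. Since the one-cylinder partition $\alpha$ is a generator for $S$ on the shift $\mathcal{X}$, a standard Kolmogorov--Sinai argument adapted to the fiber setting yields $h^{\nu}(\lambda, \alpha) = h^{\nu}(\lambda)$.

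For the log-measure term, I would use the Gibbs property (\ref{Eqn:GibbsProp}) for $\mu$: since every $C \in \alpha_n$ has the form $x[0, n{-}1]$ for any $x \in C$,
\[
-\log \mu(C) \ = \ \mathcal{P}(f)\, n \ - \ S_n f(x) \ + \ O(1), \qquad S_n f \ := \ \sum_{k=0}^{n-1} f \circ S^k,
\]
with a constant uniform in $n$ and $x \in C$. A short computation using H\"{o}lder regularity shows that the variation of $S_n f$ across a single cylinder $C \in \alpha_n$ is bounded by a constant independent of $n$: for $x, x'$ that agree on coordinates $0, \dots, n{-}1$ one has $d_{\mathcal{X}}(S^k x, S^k x') \leq 2^{-\min(k+1, n-k)}$, so the H\"{o}lder bounds sum to two geometric tails. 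Integrating against $\lambda_y$ therefore gives
\[
-\sum_{C \in \alpha_n} \lambda_y(C) \log \mu(C) \ = \ \mathcal{P}(f)\, n \ - \ \int S_n f \, d\lambda_y \ + \ O(1).
\]

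It remains to evaluate $\lim_n \tfrac{1}{n} \int S_n f \, d\lambda_y$, and this is the step where the joining structure is essential. Here I would invoke the disintegration-invariance property recorded in Lemma \ref{Lemma:InvarianceOfDisintegration}, which gives $S_* \lambda_y = \lambda_{Ty}$ for $\nu$-a.e.\ $y$. This identity turns $\int f \circ S^k \, d\lambda_y$ into $F(T^k y)$, where $F(y) := \int f \, d\lambda_y$. Since $f$ is bounded on the compact space $\mathcal{X}$, the function $F$ is $\nu$-integrable with $\int F \, d\nu = \int f \, d\lambda$ by the disintegration formula, and Birkhoff's ergodic theorem applied to the ergodic system $(\mathcal{Y}, T, \nu)$ yields $\tfrac{1}{n} \sum_{k=0}^{n-1} F(T^k y) \to \int f \, d\lambda$ for $\nu$-a.e.\ $y$. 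Combining the three asymptotics produces the claimed limit. The main obstacle is precisely this last identification: the general disintegration theorem does not see the dynamics, and one needs the joining-specific compatibility between $\{\lambda_y\}$, $S$, and $T$ to convert a $\lambda_y$-integral of a Birkhoff sum on $\mathcal{X}$ into a Birkhoff sum on $\mathcal{Y}$.
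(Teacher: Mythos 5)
Your proof is correct and follows essentially the same route as the paper's: it splits $KL(\lambda_y:\mu\mid\alpha_n)$ into the fiber entropy term and the average-information term, handles the latter with the Gibbs property, and identifies $\lim_n \tfrac1n\int f_n\,d\lambda_y$ via Lemma \ref{Lemma:InvarianceOfDisintegration} and the ergodic theorem, which is exactly the content of the paper's Lemma \ref{Lemma:IntegralDisintegration}. The only difference is cosmetic: your H\"older estimate on the variation of $S_nf$ across a cylinder is unnecessary, since the Gibbs bound already holds pointwise for every $x$ and can be integrated against $\lambda_y$ directly.
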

We defer the proof of Lemma \ref{Lemma:Dinterpretation} to Section \ref{Sect:DivergenceTerm}.
Based on this lemma, we make the following definition.
\begin{definition} \label{Def:D}
 Let $\mathcal{X}$ be a mixing SFT, $f : \mathcal{X} \to \R$ a H\"{o}lder continuous function, and $\mu$ the associated Gibbs measure. Further, let $(\mathcal{Y},T,\nu)$ be an ergodic system. Then define the relative divergence rate of $\lambda \in \mathcal{J}(S : \nu)$ to $\mu$ to be
\begin{equation} \label{Eqn:Harry}
D( \lambda : \mu) = \mathcal{P}(f) - \biggl( h^{\nu}(\lambda) + \int f(x) \, d\lambda(x,y) \biggr).
\end{equation}
\end{definition}
In the present setting, $D(\lambda : \mu)$ is always finite, and one may check that it is also nonnegative (see Lemma \ref{Lemma:JoiningsVP}).

\subsection{The rate function} \label{Sect:RateFunction}

In this section we define and discuss the rate function $V : \Theta \to \R$ whose existence is guaranteed by Theorem \ref{Thm:Pressure}. 
\begin{definition} \label{Defn:V}
For $\theta \in \Theta$, let
\begin{equation*}
V(\theta) = \inf \biggl\{ \int \ell \, d\lambda + D(\lambda : \mu_{\theta}) : \lambda \in \mathcal{J}( S : \nu) \biggr\}.
\end{equation*}
\end{definition}
Note that the variational expression defining $V$ contains the sum of an expected loss term and a divergence term. It is known that Bayesian posterior distributions satisfy a similar variational principle in the finite sample setting (see \cite{Jiang2008,Zhang2006a,Zhang2006b}). Our results show that this interpretation passes to the limit as the number of samples tends to infinity.

By Proposition \ref{Prop:LSC}, which appears in Section \ref{Sect:Regularity}, we have that $V$ is lower semi-continuous. 
Since the loss function is continuous, the proof the Proposition \ref{Prop:LSC} essentially follows from the the upper semi-continuity of the fiber entropy on the space of joinings $\mathcal{J}(S : \nu)$. 

\begin{remark}
Consider the introduction of an inverse temperature parameter $\beta \in \mathbb{R}$, as discussed in Remark \ref{Rmk:GroundStates}, and let $\ell_{\beta} = \beta \cdot \ell$ be the associated loss function. If we let $V_{\beta}$ be the associated rate function, then we see from Definition \ref{Defn:V} that
\begin{equation*}
V_{\beta}(\theta) =  \inf \biggl\{ \beta \cdot \int \ell \, d\lambda + D(\lambda : \mu_{\theta}) : \lambda \in \mathcal{J}( S : \nu) \biggr\}.
\end{equation*}
Dividing by $\beta$ and letting $\beta$ tend to infinity to investigate the ground state behavior, it is clear that the associated variational expression is
\begin{equation*}
V_{\infty}(\theta) := \lim_{\beta \to \infty} \frac{V_{\beta}(\theta)}{\beta} = \inf \biggl\{ \int \ell \, d\lambda  : \lambda \in \mathcal{J}( S : \nu) \biggr\}.
\end{equation*}
Interestingly, this variational expression has been studied recently as part of an asymptotic analysis of estimators based on empirical risk minimization for dynamical systems \cite{McGoff2016variational,McGoff2016empirical}. Indeed, the solution set $\Theta_{\infty}$ of this ground state variational problem exactly characterizes the set of possible limits of parameter estimates that asymptotically minimize average empirical risk.
\end{remark}

\section{Connections to previous work}

In the setting of i.i.d. samples, Doob \cite{Doob49CNRS} established Bayesian posterior consistency for almost every parameter 
value in the support of the prior using Martingale methods. 
Later, Schwartz \cite{Schwartz1965} gave necessary and sufficient conditions for posterior consistency at individual parameter values in the i.i.d.~setting; these conditions require that the prior charge all KL-neighborhoods of the parameter and that there exist a sequence of tests giving exponential separation of the parameter from other parameters. The challenges and pitfalls of proving posterior consistency for nonparametric models were highlighted by Diaconis and Freedman in \cite{diaconis1998}. The negative results motivated much of the recent work in Bayesian nonparametrics, as well as studying convergence in other metrics on the space of probability distributions (such as Hellinger), and consideration of rates of convergence. For a detailed review of the Bayesian nonparametic literature we refer the reader to the recent book by Ghosal and van der Vaart \cite{Ghosal2017}.

Recent years have witnessed substantial interest in moving beyond the i.i.d.~setting and considering statistical inference for dependent processes, including processes arising from dynamical systems. Statistical problems receiving recent attention in the context of dynamical systems include denoising (or filtering) \cite{Lalley1999,LalleyNobel2006}, consistency of maximum likelihood estimation \cite{McGoff2015}, forecasting and density estimation \cite{Hang2017,Steinwart2009}, empirical risk minimization \cite{McGoff2016variational,McGoff2016empirical}, and data assimilation and uncertainty quantification \cite{Law2015}. For a survey of this area, see \cite{McGoffSurvey2015}.
Bayesian posterior consistency for dependent processes has also received attention in the literature. In particular, posterior consistency has been established for certain families of finite state hidden Markov chains \cite{Chopinetal2015,Doucetal2016,Gassiat2014,Vernet2015}.  

The idea of a variational formulation of Bayesian inference was developed by Zellner \cite{Zellner1988} and the link between statistical mechanics and information theory with Bayesian inference was at the heart of the inference framework advocated by Edwin T. Jaynes \cite{Jaynes1973-JAYTWP}, a perspective that influenced Zellner \cite{Zellner1988}. Formulating Bayesian inference as a variational problem for infinite dimensional problems has been explored in the control theory and inverse problems literatures \cite{Luetal,Mitter2,1742-5468-2012-11-P11008}.  In  \cite{1742-5468-2012-11-P11008}  a variational formulation of Bayesian inference was developed for the problem of channel coding using ideas from statistical mechanics. In \cite{Mitter2} a variational characterization of Bayesian nonlinear estimation was shown to take the same form Gibbs measures in statistical mechanics. In \cite{Luetal}, the authors studied the inference problem of finding the most likely path given a Brownian dynamics model from molecular dynamics, which takes the form of a gradient flow in a potential, subject to small thermal fluctuations. In this problem setup, a variational solution was proposed for Bayesian inference.

The setting and results of \cite{McGoff2016variational} and \cite{McGoff2016empirical} are worthy of some discussion, as they may be considered frequentist analogues of the present work. Indeed, the setting of this previous work involves observations from an unknown ergodic system, a model family consisting of topological dynamical systems, and a loss function connecting the models to the observations, as in the present work. Given this setting, the previous work analyzes the convergence of parameter estimates obtained by empirical risk minimization, whereas we study the convergence of parameter estimates based on Bayesian updates (in the form of the Gibbs posterior). One additional difference is that the previous results on empirical risk minimization are more general, in the sense that the model families need only consist of continuous maps on compact metric spaces; whereas, in our Bayesian setting, we specialize to the case of SFTs with Gibbs measures. This focus on Gibbs measures in the Bayesian setting arises precisely because Gibbs measures satisfy the necessary exponential estimates (the Gibbs property (\ref{Eqn:GibbsProp})) to make the asymptotic analysis work. It should be noted that a Bayesian framework provides estimates of uncertainty which empirical risk minimization does not. 

The Gibbs posterior principle can be derived from general principles as a valid method to update belief distributions in the presence of a loss function \cite{Bissiri2016}. In particular, this framework for updating beliefs remains valid when one does not have access to a true likelihood. This inference framework has also been shown to have advantages in some settings \cite{Jiang2008}. One of the motivations for the use of the Gibbs posterior in \cite{Jiang2008} was that exponentiating a robust loss function can better accommodate model misspecification, e.g., when the assumed likelihood is not the sample generating process. A logistic regression example is provided in  \cite{Jiang2008} for which the usual posterior-based logistic regression produces suboptimal classification error even from among the misspecified logistic regression models, while the Gibbs posterior is optimal. Another argument for using a loss-based approach comes from the robust statistics literature \cite{Huber1981}. A key idea in robust statistics is that one can define loss functions that are not sensitive to contamination of standard error or likelihood models. Thus, even if the model is misspecified, inference using the robust loss function is still reliable. The advantage of the Gibbs posterior framework is that one can specify coherent Bayesian updating using a robust loss function and not have to specify the data generating process.

The thermodynamic formalism in dynamical systems, originally pioneered by Sinai, Ruelle, and Bowen, involves adaptation of many ideas and methods from statistical physics to the setting of dynamical systems, and it has played a large role in the development of ergodic theory and dynamical systems over many years. For an introduction to the area and some connections to statistical physics, see the books by Bowen \cite{Bowen1975}, Ruelle \cite{Ruelle2004}, or Walters \cite{Walters1982}. Let us mention that connections to Markov chains and other stochastic processes have a long history in this area \cite{Benedicks2000,Young1998,Young1999}. Additionally, relative equilibrium states were studied by Ledrappier and Walters \cite{LedrappierWalters1977}, and recent results on uniqueness of relative equilibrium states \cite{Allahbakhshi2013,Allahbakhshi2017,Antonioli2016,Petersen2003} may contain interesting ideas to apply towards Bayesian posterior consistency.

\section{Technical preliminaries}

This section contains several technical results that we use later in the proofs of the main theorems.

\subsection{Pressure Lemma}

We refer to the following elementary fact, which is an easy consequence of Jensen's inequality, as the Pressure Lemma; see \cite[Lemma 9.9]{Walters1982}.
\begin{lemma}
\label{Lem:Pressure}
Let $a_1, \dots, a_k$ be real numbers. If $p_i \geq 0$ and $\sum_{i=1}^k p_i =1$, then
\begin{equation*}
\sum_{i=1}^k p_i (a_i - \log p_i) \leq \log \Biggl( \sum_{i=1}^k \exp(a_i) \Biggr),
\end{equation*}
with equality if and only if
\begin{equation*}
p_i = \frac{ \exp(a_i) }{ \sum_{j=1}^k \exp(a_i)}.
\end{equation*}
\end{lemma}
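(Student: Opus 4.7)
The plan is to reduce the asserted inequality to the non-negativity of relative entropy, which in turn is the standard consequence of Jensen's inequality for the concave function $\log$. This is the connection hinted at by the lemma's name: the right-hand side plays the role of a finite-dimensional pressure, and the maximizing distribution is the Gibbs distribution $q_i = \exp(a_i)/Z$ with $Z = \sum_j \exp(a_j)$.

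First I would set $Z = \sum_{j=1}^k \exp(a_j)$ and define $q_i = \exp(a_i)/Z$, so that $a_i = \log q_i + \log Z$ for each $i$. Substituting this into the left-hand side and using $\sum_i p_i = 1$ yields the identity
\begin{equation*}
\sum_{i=1}^k p_i(a_i - \log p_i) \,=\, \log Z \,-\, \sum_{i : p_i > 0} p_i \log\frac{p_i}{q_i},
\end{equation*}
where the convention $0 \log 0 = 0$ lets me drop the indices with $p_i = 0$. The lemma is therefore equivalent to the assertion that the relative-entropy-like quantity $\sum_{i : p_i > 0} p_i \log(p_i/q_i)$ is nonnegative.

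To prove this nonnegativity I would apply Jensen's inequality to the concave function $\log$ on the indices where $p_i > 0$:
\begin{equation*}
\sum_{i : p_i > 0} p_i \log\frac{q_i}{p_i} \,\leq\, \log\!\left(\sum_{i : p_i > 0} p_i \cdot \frac{q_i}{p_i}\right) \,=\, \log\!\left(\sum_{i : p_i > 0} q_i\right) \,\leq\, \log 1 \,=\, 0.
\end{equation*}
This is the desired inequality. For the equality case I would trace when each step is tight: Jensen's inequality is tight iff $q_i/p_i$ is constant on the support of $p$, and the final inequality is tight iff $q_i = 0$ whenever $p_i = 0$. Together these conditions force $p_i = q_i$ for every $i$, which is precisely the stated equality condition.

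There is no real obstacle in this argument; the only care needed is the routine bookkeeping for indices with $p_i = 0$, handled cleanly by the convention $0 \log 0 = 0$ and by restricting the Jensen sum to the support of $p$.
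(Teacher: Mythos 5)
Your proof is correct, and it is essentially the argument the paper points to: the paper gives no proof of its own but cites this as an easy consequence of Jensen's inequality (Walters, Lemma 9.9), which is exactly your application of Jensen to the concave $\log$ after rewriting the left side via the Gibbs weights $q_i = \exp(a_i)/Z$. Your handling of the support and the equality case (noting $q_i>0$ for all $i$, so tightness forces full support and then $p=q$) is sound.
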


\subsection{The space of joinings and the ergodic decomposition} \label{Sect:ErgDecomp}


Our proofs rely on a general version of the ergodic decomposition for invariant probability measures. 
The following version, a restatement of \cite[Theorem 2.5]{Sarig2008}, is sufficient for our purposes.


\begin{theorem*}[The Ergodic Decomposition]
Suppose that $R : \cU \to \cU$ is a Borel measurable map of a Polish space $\cU$ and that $\mu \in \cM(\cU,R)$. 
Then there exists a Borel probability measure $Q$ on $\cM(\cU)$ 
such that 
\renewcommand{\theenumi}{\arabic{enumi}}
\renewcommand{\labelenumi}{(\theenumi)}
\begin{enumerate}
\vskip.07in
\item $Q\bigl( \{ \eta \mbox{ is invariant and ergodic for $R$}  \} \bigr) = 1$
\vskip.1in
\item If $f \in L^1(\lambda)$, then $f \in L^1(\eta)$ for $Q$-almost every $\eta$, and
        \begin{equation*}
         \int f \, d\mu = \int \biggl(\int f \, d\eta \biggr) \, dQ(\eta).
        \end{equation*}
\end{enumerate}
Whenever (2) holds, we write $\mu = \int \eta \, dQ$.
\end{theorem*}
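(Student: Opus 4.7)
The plan is to construct $Q$ via regular conditional probabilities with respect to the sub-$\sigma$-algebra of $R$-invariant Borel sets. Let $\mathcal{I} = \{A \in \mathcal{B}(\cU) : R^{-1} A = A\}$. Since $\cU$ is Polish, a standard existence theorem for regular conditional probabilities yields a Borel map $x \mapsto \mu_x$ from $\cU$ to $\cM(\cU)$ such that for every Borel $A \subset \cU$,
\[
\mu_x(A) = \E_{\mu}[\1_A \mid \mathcal{I}](x) \quad \text{for } \mu\text{-a.e.\ } x.
\]
The candidate decomposition is then $Q := \mu \circ (x \mapsto \mu_x)^{-1}$, the pushforward of $\mu$ under this assignment. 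Property (2) would then follow from the tower property of conditional expectation: for $f \in L^1(\mu)$, Fubini yields integrability of $f$ with respect to $\mu_x$ for $\mu$-a.e.\ $x$, and
\[
\int f \, d\mu = \int \E[f \mid \mathcal{I}](x) \, d\mu(x) = \int \left( \int f \, d\mu_x \right) d\mu(x) = \int \left( \int f \, d\eta \right) dQ(\eta).
\]

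For property (1), I would first verify invariance of $\mu_x$ for $\mu$-a.e.\ $x$: since $\E[f \circ R \mid \mathcal{I}] = \E[f \mid \mathcal{I}]$ for every bounded Borel $f$, the identity $\mu_x(R^{-1} A) = \mu_x(A)$ holds simultaneously for every $A$ in a fixed countable algebra generating $\mathcal{B}(\cU)$ off a single $\mu$-null set, which suffices. Ergodicity is the main obstacle. I would apply the Birkhoff ergodic theorem on $(\cU, R, \mu)$ to a countable convergence-determining family $\{f_j\} \subset C_b(\cU)$: the averages $S_n f_j := n^{-1} \sum_{k=0}^{n-1} f_j \circ R^k$ converge $\mu$-a.e.\ to an $\mathcal{I}$-measurable limit $\tilde{f}_j$, which must be $\mu_x$-a.s.\ constant and equal to $\int f_j \, d\mu_x$. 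Re-applying Birkhoff to the system $(\cU, R, \mu_x)$ for $\mu$-a.e.\ $x$ shows that $S_n f_j \to \int f_j \, d\mu_x$ holds $\mu_x$-a.s.\ for all $j$; since $\{f_j\}$ is convergence-determining, this forces $\mu_x$ to be ergodic.

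The principal technical subtlety, and the step I expect to be most delicate, is the uniform handling of exceptional null sets: one must arrange that a single $\mu$-null set suffices for the invariance, integrability, and ergodic-average identities to hold for all test functions and all relevant invariant sets at once. This depends crucially on the Polish hypothesis, which ensures both that $\mathcal{B}(\cU)$ is countably generated and that $C_b(\cU)$ admits a countable convergence-determining subset.
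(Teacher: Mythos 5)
A preliminary remark: the paper does not prove this statement at all — it is quoted verbatim as a restatement of Theorem 2.5 of Sarig's lecture notes — so your proposal can only be assessed on its own merits. Your route (disintegrate $\mu$ over the invariant $\sigma$-algebra $\mathcal{I}$ via regular conditional probabilities and push forward to get $Q$) is the standard proof of this result, and the invariance of $\mu_x$ and property (2) are handled correctly.

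There is, however, a genuine gap at the crux of ergodicity. You assert that the Birkhoff limit $\tilde f_j$ ``must be $\mu_x$-a.s.\ constant and equal to $\int f_j\,d\mu_x$,'' and the supporting step you offer — re-applying Birkhoff to $(\cU,R,\mu_x)$ — does not deliver this: under $\mu_x$ the ergodic theorem only gives $S_n f_j \to \E_{\mu_x}[f_j \mid \mathcal{I}]$ $\mu_x$-a.s., and identifying that limit with the constant $\int f_j\,d\mu_x$ is exactly the ergodicity being proved, so the argument as written is circular at this point. Your closing diagnosis also locates the difficulty in the wrong place: countable generation of $\mathcal{B}(\cU)$ does not help, because the invariant $\sigma$-algebra $\mathcal{I}$ is in general \emph{not} countably generated, so one cannot claim that for a.e.\ $x$ every $\mathcal{I}$-measurable function is $\mu_x$-a.s.\ constant. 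What rescues the proof is that you only need this for the countably many fixed functions $\tilde f_j$: since $\tilde f_j$ is bounded and $\mathcal{I}$-measurable, for $\mu$-a.e.\ $x$ one has both $\int \tilde f_j\,d\mu_x = \tilde f_j(x)$ and $\int \tilde f_j^{\,2}\,d\mu_x = \tilde f_j(x)^2$, so the $\mu_x$-variance of $\tilde f_j$ vanishes and $\tilde f_j = \tilde f_j(x)$ $\mu_x$-a.s.; combining this with the fact that the $\mu$-null set where $S_n f_j \not\to \tilde f_j$ is $\mu_x$-null for a.e.\ $x$ yields $S_n f_j \to \int f_j\,d\mu_x$ $\mu_x$-a.s.\ (alternatively, replace $\mathcal{I}$ by a countably generated sub-$\sigma$-algebra agreeing with it mod $\mu$-null sets, so that $\mu_x$ concentrates on the atom of $x$). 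Finally, the closing implication ``this forces $\mu_x$ to be ergodic'' deserves a line as well: if $A$ were invariant with $0<\mu_x(A)<1$, then invariance of $A$ and $\mu_x$ gives $\int_A S_n f_j\,d\mu_x = \int_A f_j\,d\mu_x$ for all $n$, so by bounded convergence $\int f_j\,d\mu_x(\cdot \mid A) = \int f_j\,d\mu_x$ for every $j$; since a convergence-determining family is measure-determining, $\mu_x(\cdot \mid A)=\mu_x$, contradicting $\mu_x(A)<1$. With these two repairs your argument is a complete and standard proof.
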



Additionally, we require the following results about the structure of $\mathcal{J}(S : \nu)$ from \cite{McGoff2016variational}.
\begin{theorem*}[Structure of the space of joinings]
 Suppose $R : \mathcal{U} \to \mathcal{U}$ is a continuous map of a compact metrizable space and $(\mathcal{Y},T,\nu)$ is an ergodic measure-preserving system as in Section \ref{Sect:Intro}. Then $\mathcal{J}(R : \nu)$ is non-empty, compact, and convex. Furthermore, a joining $\lambda \in \mathcal{J}(R : \nu)$ is an extreme point of $\mathcal{J}(R : \nu)$ if and only if $\lambda$ is ergodic for $R \times T$. Lastly, if $\lambda \in \mathcal{J}(R : \nu)$ and $\lambda = \int \eta \, dQ$ is its ergodic decomposition, then $Q$-almost every $\eta$ is in $\mathcal{J}(R : \nu)$.
\end{theorem*}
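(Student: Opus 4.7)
The plan is to verify the three conclusions in order, with the ergodic decomposition of $\lambda \in \mathcal{J}(R:\nu)$ with respect to $R \times T$ serving as the central tool.

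For non-emptiness, I would use the fact that $R$ is continuous on the compact space $\mathcal{U}$ to invoke the Krylov-Bogolyubov theorem, producing some $R$-invariant $\mu \in \mathcal{M}(\mathcal{U},R)$; then $\mu \otimes \nu$ is $(R \times T)$-invariant with the correct marginals and hence lies in $\mathcal{J}(R:\nu)$. Convexity is immediate: a convex combination of joinings has $\mathcal{Y}$-marginal $\nu$, is $(R\times T)$-invariant, and its $\mathcal{U}$-marginal is a convex combination of $R$-invariant measures, hence $R$-invariant. For compactness, I would argue weak$^*$ closedness (the marginal constraint and invariance are each preserved by weak$^*$ limits, using test functions depending only on $y$ for the marginal constraint) and weak$^*$ relative compactness via tightness: since $\mathcal{U}$ is compact and $\nu$ is a tight measure on the Polish space $\mathcal{Y}$, any family of measures on $\mathcal{U}\times\mathcal{Y}$ whose $\mathcal{Y}$-marginals equal $\nu$ is tight, so Prokhorov applies.

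Next I would prove the last assertion (that $Q$-almost every ergodic component is itself a joining), which is the conceptual heart of the theorem. Given $\lambda \in \mathcal{J}(R:\nu)$, apply the ergodic decomposition theorem (as stated in the excerpt) to $\lambda$ viewed as an $(R\times T)$-invariant measure on the Polish space $\mathcal{U}\times\mathcal{Y}$, yielding $\lambda = \int \eta \, dQ(\eta)$ with $Q$-a.e.\ $\eta$ ergodic for $R\times T$. For such $\eta$, the $\mathcal{Y}$-marginal $(\eta)_{\mathcal{Y}}$ is $T$-invariant, and for any $T$-invariant Borel set $A \subset \mathcal{Y}$ the set $\mathcal{U}\times A$ is $(R\times T)$-invariant, so $(\eta)_{\mathcal{Y}}(A) = \eta(\mathcal{U}\times A)\in\{0,1\}$; hence $(\eta)_{\mathcal{Y}}$ is $T$-ergodic. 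Integrating the disintegration in the $\mathcal{Y}$-variable gives $\nu = \int (\eta)_{\mathcal{Y}} \, dQ(\eta)$, and by uniqueness of the ergodic decomposition of the ergodic measure $\nu$ (whose decomposition is $\delta_\nu$), we conclude $(\eta)_{\mathcal{Y}} = \nu$ for $Q$-a.e.\ $\eta$. The $\mathcal{U}$-marginal of each such $\eta$ is automatically $R$-invariant, so $\eta \in \mathcal{J}(R:\nu)$.

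With the ergodic decomposition inside $\mathcal{J}(R:\nu)$ in hand, the extreme point characterization falls out. If $\lambda$ fails to be ergodic for $R\times T$, then $Q$ is not a Dirac mass, so the decomposition $\lambda = \int \eta \, dQ$ expresses $\lambda$ as a nontrivial average of distinct elements of $\mathcal{J}(R:\nu)$, preventing $\lambda$ from being extreme. Conversely, if $\lambda$ is ergodic for $R\times T$ and $\lambda = t\lambda_1 + (1-t)\lambda_2$ with $t \in (0,1)$ and $\lambda_1,\lambda_2 \in \mathcal{J}(R:\nu)$, then $\lambda_1,\lambda_2 \ll \lambda$ are $(R\times T)$-invariant, so the standard argument that ergodic measures are extreme in the simplex of invariant measures (apply the Radon-Nikodym theorem and note that the density is $(R\times T)$-invariant, hence constant) forces $\lambda_1 = \lambda_2 = \lambda$.

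The main obstacle is the ergodic-decomposition step in the second paragraph: one must show that the projections $(\eta)_{\mathcal{Y}}$ are themselves ergodic and conclude, via uniqueness of the ergodic decomposition for the ergodic measure $\nu$, that they coincide with $\nu$ almost surely. Once this transfer of ergodicity through a factor is recognized, both the extreme-point characterization and the usability of the decomposition follow formally. Every other step is either standard weak$^*$/tightness bookkeeping or the classical proof that ergodic invariant measures are extreme.
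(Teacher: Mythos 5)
A preliminary remark: the paper itself offers no proof of this statement --- it is imported verbatim from \cite{McGoff2016variational} --- so your argument has to stand on its own. Most of it does. Non-emptiness via Krylov--Bogolyubov and the product joining, convexity, the transfer of ergodicity to the $\mathcal{Y}$-marginal (using that $\mathcal{U}\times A$ is $(R\times T)$-invariant whenever $A$ is $T$-invariant), the identification $(\eta)_{\mathcal{Y}}=\nu$ for $Q$-a.e.\ $\eta$ from ergodicity of $\nu$ (if you prefer not to invoke uniqueness of the ergodic decomposition, this also follows directly from Birkhoff averages: a mixture of ergodic measures that equals an ergodic measure is a.e.\ equal to it), and both directions of the extreme-point characterization are sound. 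In the ``not ergodic $\Rightarrow$ not extreme'' direction you should add the routine splitting step: choose a Borel set $B$ of components with $0<Q(B)<1$ on which some integral $\eta\mapsto\int F\,d\eta$ is not $Q$-a.s.\ constant, and write $\lambda$ as the two-point convex combination of the conditional barycenters over $B$ and $B^c$, which lie in $\mathcal{J}(R:\nu)$ by the part you just proved; extremality concerns two-point combinations, not integral mixtures, so this step is needed, though it is standard.

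The genuine gap is in the compactness argument, where you assert that $(R\times T)$-invariance is ``preserved by weak$^*$ limits.'' In the paper's setting $T$ is only Borel measurable on the Polish, non-compact space $\mathcal{Y}$, and the push-forward by a merely measurable map is not weak$^*$-continuous: for $\lambda_n\to\lambda$ one cannot pass to the limit in $\int F\circ(R\times T)\,d\lambda_n=\int F\,d\lambda_n$ because $F\circ(R\times T)$ need not be continuous, and in general a weak$^*$ limit of invariant measures for a Borel map need not be invariant (e.g.\ fixed-point masses $\delta_{1/n}$ for a map on $[0,1]$ that fixes every $x>0$ but moves $0$). The statement survives precisely because of the structure you did not use: every $\lambda_n$, and the limit $\lambda$, has $\mathcal{Y}$-marginal exactly $\nu$. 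Test invariance on product functions $F(u,y)=f(u)g(y)$ with $f,g$ bounded continuous; $f\circ R$ is continuous, and by Lusin's theorem pick a continuous $h$ on $\mathcal{Y}$ with $\|h\|_\infty\le\|g\|_\infty$ and $\nu(\{g\circ T\neq h\})<\epsilon$. Since all measures involved project to $\nu$, replacing $g\circ T$ by $h$ changes each integral by at most $2\|f\|_\infty\|g\|_\infty\,\epsilon$ uniformly in $n$; one can then pass to the weak$^*$ limit and let $\epsilon\to0$, and product test functions determine the measure, so $\lambda\circ(R\times T)^{-1}=\lambda$. (Your tightness argument via $\mathcal{U}\times K$ with $\nu(K)>1-\epsilon$ is fine, and $R$-invariance of the $\mathcal{U}$-marginal of the limit is automatic from joint invariance.) With this repair the proof is complete, and is in substance the argument behind the cited result.
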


Let $\lambda \in \mathcal{J}(R : \nu)$. By the above theorem, the ergodic decomposition of $\lambda$ is a representation of $\lambda$ as an integral combination of the extreme points of $\mathcal{J}(R : \nu)$. 
A function $F : \mathcal{J}(R : \nu) \to \mathbb{R}$ is called \textit{harmonic} if for each $\lambda \in \mathcal{J}(R : \nu)$, 
\begin{equation*}
F(\lambda) = \int F(\eta) \, dQ(\eta),
\end{equation*}
where $\lambda = \int \eta \, dQ$ is the ergodic decomposition of $\lambda$.

\subsection{Disintegration results} \label{Sect:DisintegrationResults}

Suppose $R : \mathcal{U} \to \mathcal{U}$ is a continuous map of a compact metric space and $(\mathcal{Y},T,\nu)$ is an ergodic system.
It is well known in ergodic theory (see \cite{Glasner2003}) that for any joining $\lambda \in \mathcal{J}(R : \nu)$, if $\lambda = \int \lambda_y \otimes \delta_y \, d\nu(y)$ is its disintegration over $\nu$, then the family of measures $\{ \lambda_y\}_{y \in \mathcal{Y}}$ satisfies an additional invariance property, which we state in the following lemma.
\begin{lemma} \label{Lemma:InvarianceOfDisintegration}
Let $\lambda \in \cJ(R: \nu)$, and let $\lambda = \int \lambda_y \otimes \delta_y \, d\nu(y)$ be its disintegration over $\nu$.
Then $(\lambda_y \otimes \delta_y) \circ (R \times T)^{-1} = \lambda_{Ty} \otimes \delta_{Ty}$ for $\nu$-almost every $y \in \mathcal{Y}$, and hence, for every $f \in L^1(\lambda)$ and $\nu$-almost every $y \in \mathcal{Y}$,
\begin{equation*}
\int f(Ru,Ty) \, d\lambda_y(u) = \int f(u,Ty) \, d\lambda_{Ty}(u).
\end{equation*}
\end{lemma}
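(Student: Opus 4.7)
The plan is to exploit the $(R \times T)$-invariance of $\lambda$, the $T$-invariance of $\nu$, and the essential uniqueness of the disintegration. The idea is to produce two representations of $\lambda$ as an integral over $y \in \mathcal{Y}$ of fiber measures concentrated at $\mathcal{Y}$-coordinate $Ty$, and then equate the corresponding fibers $\nu$-almost everywhere.

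First I would compute $(R \times T)_*\lambda$ using the given disintegration. Since $\lambda$ is $(R \times T)$-invariant and the pushforward commutes with integration against $\nu$,
\begin{equation*}
\lambda \,=\, (R \times T)_*\lambda \,=\, \int (R \times T)_*(\lambda_y \otimes \delta_y) \, d\nu(y) \,=\, \int (\lambda_y \circ R^{-1}) \otimes \delta_{Ty} \, d\nu(y).
\end{equation*}
Next I would rewrite $\lambda$ using the $T$-invariance of $\nu$. Viewing $z \mapsto \lambda_z \otimes \delta_z$ as a measurable map into $\mathcal{M}(\mathcal{U} \times \mathcal{Y})$ and using $T_*\nu = \nu$,
\begin{equation*}
\lambda \,=\, \int \lambda_z \otimes \delta_z \, d\nu(z) \,=\, \int \lambda_z \otimes \delta_z \, d(T_*\nu)(z) \,=\, \int \lambda_{Ty} \otimes \delta_{Ty} \, d\nu(y).
\end{equation*}
Equating the two representations gives
$$\int (\lambda_y \circ R^{-1}) \otimes \delta_{Ty} \, d\nu(y) \,=\, \int \lambda_{Ty} \otimes \delta_{Ty} \, d\nu(y).$$

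The hard part will be upgrading this integrated identity to pointwise equality of fibers $\nu$-almost everywhere. My plan is to test both sides against a countable generating family of bounded continuous functions on $\mathcal{U} \times \mathcal{Y}$, deduce that the corresponding scalar integrals agree $\nu$-almost everywhere for each test function, and take the union of countably many $\nu$-null sets to conclude $\lambda_y \circ R^{-1} = \lambda_{Ty}$ off a single null set. This step must use essential uniqueness of the disintegration, applied with care because the fibers are supported over $Ty$ rather than over $y$; the standard route is to reduce to uniqueness of the disintegration of $\lambda$ over $\nu$ itself by regrouping the fibers along level sets of $T$, invoking the reference \cite{Glasner2003}. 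Granting the pointwise equality of fibers, the integral identity in the second half of the lemma follows by integrating any $f \in L^1(\lambda)$ against the equality $(\lambda_y \circ R^{-1}) \otimes \delta_{Ty} = \lambda_{Ty} \otimes \delta_{Ty}$ at an arbitrary such $y$.
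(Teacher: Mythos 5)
The paper offers no proof of this lemma at all --- it is quoted as a known fact with a pointer to \cite{Glasner2003} --- so the only question is whether your argument is sound. Your first computation is correct: $(R\times T)$-invariance of $\lambda$ and $T$-invariance of $\nu$ give the integrated identity $\int (\lambda_y\circ R^{-1})\otimes\delta_{Ty}\,d\nu(y)=\lambda=\int \lambda_{Ty}\otimes\delta_{Ty}\,d\nu(y)$. The gap is exactly at the step you flag as the hard part, and the plan you sketch does not close it. Both families of fibre measures are concentrated on the fibre over $Ty$, not over $y$, so neither representation is a disintegration of $\lambda$ over $\nu$ in the indexed-by-$y$ sense; when you ``regroup along the level sets of $T$'' and then invoke essential uniqueness of the disintegration of $\lambda$ over $\nu$, what you actually obtain is only the averaged identity $\lambda_z=\mathbb{E}_\nu\bigl[\lambda_{(\cdot)}\circ R^{-1}\mid T(\cdot)=z\bigr]$ for $\nu$-a.e.\ $z$, i.e.\ $\lambda_z$ is the conditional (transfer-operator) average of $y\mapsto\lambda_y\circ R^{-1}$ over the $T$-preimages of $z$. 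That is strictly weaker than the pointwise claim $\lambda_y\circ R^{-1}=\lambda_{Ty}$ unless $T$ is invertible mod $\nu$ (or, what amounts to the same thing here, unless $y\mapsto\lambda_y\circ R^{-1}$ is a.e.\ measurable with respect to $T^{-1}$ of the Borel $\sigma$-algebra).

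This is not a removable technicality: for merely Borel, non-invertible $T$ the integrated identity does not determine the fibres, and the pointwise conclusion can fail outright within the lemma's stated hypotheses. Take $\mathcal{U}=\{0,1\}^{\mathbb{Z}}$ with the shift $(Ru)_n=u_{n+1}$, $\mathcal{Y}=\{0,1\}^{\mathbb{N}}$ with the one-sided shift $(Ty)_n=y_{n+1}$, $\nu$ the fair Bernoulli measure, $\pi:\mathcal{U}\to\mathcal{Y}$ the deletion of negative coordinates, and $\lambda$ the law of $(u,\pi u)$ with $u$ fair Bernoulli on $\mathcal{U}$; then $\lambda\in\mathcal{J}(R:\nu)$, and $\lambda_y$ pins $u_n=y_n$ for $n\ge 0$ while leaving $(u_n)_{n<0}$ i.i.d., so $\lambda_y\circ R^{-1}$ gives the set $\{u_{-1}=y_0\}$ mass $1$ whereas $\lambda_{Ty}$ gives it mass $1/2$: the claimed equality fails for a.e.\ $y$ even though your two representations of $\lambda$ hold. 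So a correct proof must use invertibility of $(\mathcal{Y},T,\nu)$ mod $0$ (e.g.\ two-sided shifts, or pass to the natural extension --- the setting in which \cite{Glasner2003} states the fact). Granting that, your computation does finish cleanly and more directly than via regrouping: substitute $z=Ty$ in the first representation to rewrite it as $\int(\lambda_{T^{-1}z}\circ R^{-1})\otimes\delta_z\,d\nu(z)$, which is then a genuine disintegration of $\lambda$ over $\nu$, and essential uniqueness gives $\lambda_{T^{-1}z}\circ R^{-1}=\lambda_z$ for a.e.\ $z$, which is the claim. As written, however, the key step is missing, and under the hypotheses as stated it cannot be supplied.
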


\subsection{Limiting average loss}

The following lemma will be applied to the limiting average loss. Recall that when $R : \mathcal{U} \to \mathcal{U}$ is a continuous map of a compact metric space, the space $\cJ(R : \nu)$ of joinings is non-empty. For notation, if $f : \mathcal{U} \times \mathcal{Y} \to \R$, then we let $f_n(u,y) = \sum_{k=0}^{n-1} f(R^ku, T^ky)$.

\begin{lemma} \label{Lemma:IntegralDisintegration}
Suppose that $R : \mathcal{U} \to \mathcal{U}$ is a Borel self-map of a complete metric space $\mathcal{U}$, 
and that $f : \mathcal{U} \times \mathcal{Y} \to \R$ is a Borel function for which there exists 
$f^* : \mathcal{Y} \to \R$ in $L^1(\nu)$ such that $\sup_{u \in U} |f(u,y)| \leq f^*(y)$ for each $y \in \mathcal{Y}$. 
Then for any joining $\lambda \in \cJ(R: \nu)$ with disintegration $\lambda = \int \lambda_y \otimes \delta_y \, d\nu(y)$ over $\nu$, 
 for $\nu$-almost every $y \in \cY$,
\begin{equation*}
\lim_n \frac{1}{n} \int f_n(u,y) \, d\lambda_y(u) = \int f \, d\lambda.
\end{equation*}
\end{lemma}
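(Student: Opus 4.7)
The plan is to reduce the claim to an application of the Birkhoff ergodic theorem on the observed system $(\mathcal{Y},T,\nu)$. The key tool is Lemma \ref{Lemma:InvarianceOfDisintegration}, which transports the shifted integral along fibers back to an unshifted integral on a shifted fiber.

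First I would define the function $g : \mathcal{Y} \to \R$ by
\[
g(y) \, = \, \int f(u,y) \, d\lambda_y(u).
\]
Using the pointwise bound $|f(u,y)| \leq f^*(y)$ we get $|g(y)| \leq f^*(y)$, so $g \in L^1(\nu)$, and by the disintegration formula $\int g \, d\nu = \int f \, d\lambda$. Next, iterating the invariance identity of Lemma \ref{Lemma:InvarianceOfDisintegration}, we have
\[
(\lambda_y \otimes \delta_y) \circ (R \times T)^{-k} \, = \, \lambda_{T^k y} \otimes \delta_{T^k y}
\]
for $\nu$-almost every $y$ and every $k \geq 0$ (the exceptional null set being the union of the countably many null sets from each single application). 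Applying both sides to $f$ and simplifying yields
\[
\int f(R^k u, T^k y) \, d\lambda_y(u) \, = \, \int f(u, T^k y) \, d\lambda_{T^k y}(u) \, = \, g(T^k y)
\]
for $\nu$-almost every $y$.

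Summing this identity over $k = 0, \dots, n-1$ and dividing by $n$ gives
\[
\frac{1}{n} \int f_n(u,y) \, d\lambda_y(u) \, = \, \frac{1}{n} \sum_{k=0}^{n-1} g(T^k y).
\]
Since $(\mathcal{Y},T,\nu)$ is ergodic and $g \in L^1(\nu)$, the Birkhoff ergodic theorem implies that for $\nu$-almost every $y$ the right-hand side converges to $\int g \, d\nu = \int f \, d\lambda$, which is exactly the desired conclusion.

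The only delicate point is the joint handling of null sets: Lemma \ref{Lemma:InvarianceOfDisintegration} is stated for $\nu$-a.e. $y$, and one must ensure that the iterated identity and the Birkhoff convergence hold simultaneously outside a single $\nu$-null set. This is routine, however, because a countable union of $\nu$-null sets is $\nu$-null, and $T$-invariance of $\nu$ allows the exceptional set at each iterate to be pushed back to a fixed $\nu$-null set. No other obstacle arises, so the lemma follows directly from these ingredients.
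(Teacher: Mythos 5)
Your proof is correct and follows essentially the same route as the paper: defining $\tilde f(y)=\int f(u,y)\,d\lambda_y(u)$, invoking Lemma \ref{Lemma:InvarianceOfDisintegration} (iterated) to identify the fiber averages with Birkhoff sums of $\tilde f$, and concluding via the pointwise ergodic theorem. Your additional care with the null sets is a welcome fleshing-out of a step the paper treats implicitly, but it is not a different argument.
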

\begin{proof}
For $y \in \cY$ define $\tilde{f}(y) = \int f(u,y) \, d\lambda_y(u)$. Then $\tilde{f} \in L^1(\nu)$, since $f \in L^1(\lambda)$ (using the hypotheses involving $f^*$). Now Lemma \ref{Lemma:InvarianceOfDisintegration}, together with the pointwise ergodic theorem, yields that for $\nu$ almost every $y$,
\begin{equation*}
\lim_n \frac{1}{n} \int f_n(u,y) \, d\lambda_y(u) = \lim_n \frac{1}{n} \sum_{k=0}^{n-1} \tilde{f}\bigl( T^ky \bigr) = \int \tilde{f} \, d\nu = \int f \, d\lambda.
\end{equation*}
\end{proof}

\subsection{Fiber entropy} \label{Sect:FiberEntropy}

We require two additional properties of the fiber entropy in our setting.
The first property is that fiber entropy is harmonic. This fact appears with proof as Lemma 3.2 (iii) in \cite{LedrappierWalters1977} in a setting under which $T : \mathcal{Y} \to \mathcal{Y}$ is a continuous map of a compact space, but careful inspection shows that the proof does not depend on this hypothesis.

\begin{lemma} \label{Lemma:FiberEntropyIsHarmonic}
The map $\lambda \mapsto h^{\nu}(\lambda)$ from $\mathcal{J}(R : \nu)$ to the non-negative extended reals satisfies the following property: if $\lambda = \int \eta \, dQ(\eta)$ is the ergodic decomposition of $\lambda$, then
\begin{equation*}
h^{\nu}(\lambda) = \int h^{\nu}(\eta) \, dQ(\eta).
\end{equation*}
\end{lemma}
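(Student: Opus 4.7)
My plan is to establish harmonicity at the level of a fixed finite partition $\alpha$, namely $h^\nu(\lambda,\alpha) = \int h^\nu(\eta,\alpha)\,dQ(\eta)$, and then upgrade to the full fiber entropy by taking the supremum over a generating refining sequence of partitions. The first reduction is to recognize fiber entropy as a conditional Kolmogorov--Sinai entropy. Let $\mathcal{F}_\mathcal{Y}$ denote the pullback to $\mathcal{U}\times\mathcal{Y}$ of the Borel $\sigma$-algebra on $\mathcal{Y}$, and let $\tilde\alpha = \{A\times\mathcal{Y} : A \in \alpha\}$ be the lift of $\alpha$. Since $\lambda$ disintegrates as $\int \lambda_y\otimes\delta_y\,d\nu(y)$, a direct calculation gives $H(\lambda,\tilde\alpha_n \mid \mathcal{F}_\mathcal{Y}) = \int H(\lambda_y,\alpha_n)\,d\nu(y) = H(\lambda,\alpha_n\mid \mathcal{Y})$, so that $h^\nu(\lambda,\alpha)$ coincides with the conditional KS entropy $h_{R\times T}(\lambda,\tilde\alpha\mid\mathcal{F}_\mathcal{Y})$, and similarly for each ergodic component $\eta$.

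The core of the argument is to apply the pointwise Shannon--McMillan--Breiman theorem on the product system $(\mathcal{U}\times\mathcal{Y}, R\times T, \lambda)$. For $\lambda$-a.e.\ $(u,y)$, the quantity $-n^{-1}\log\lambda_y(\alpha_n(u))$ converges, both $\lambda$-a.e.\ and in $L^1(\lambda)$, to some function $g(u,y)$. Since each ergodic component $\eta$ of $\lambda$ lies in $\mathcal{J}(R:\nu)$ by the structure theorem for joinings, SMB applied to $\eta$ gives $-n^{-1}\log\eta_y(\alpha_n(u)) \to h^\nu(\eta,\alpha)$ for $\eta$-a.e.\ $(u,y)$. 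Comparing $\lambda_y(\alpha_n(u))$ with $\eta_y(\alpha_n(u))$ along the orbit of $T$ through $y$ (using Lemma~\ref{Lemma:InvarianceOfDisintegration}, together with the fact that the ergodicity of $\nu$ forces every element of $\mathcal{J}(R:\nu)$ to project to $\nu$ and so prevents the $\mathcal{Y}$-marginal from being refined by the ergodic decomposition) identifies $g(u,y) = h^\nu(\eta_{(u,y)},\alpha)$, where $\eta_{(u,y)}$ is the ergodic component through $(u,y)$. Integrating against $\lambda$ and using the $L^1$ convergence of SMB,
\begin{equation*}
h^\nu(\lambda,\alpha) = \int g\,d\lambda = \int h^\nu(\eta,\alpha)\,dQ(\eta).
\end{equation*}

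To conclude, I would choose a refining sequence of finite measurable partitions $\alpha^{(k)}$ of $\mathcal{U}$ with diameters tending to zero. By the generator theorem for fiber entropy (Kifer \cite{Kifer2006}), $h^\nu(\mu,\alpha^{(k)}) \uparrow h^\nu(\mu)$ for every $\mu\in\mathcal{J}(R:\nu)$, and in particular for $\lambda$ and for $Q$-a.e.\ $\eta$. Applying the partition-level identity with $\alpha = \alpha^{(k)}$ and passing to the limit via monotone convergence on the right-hand side yields the claimed harmonicity. The main obstacle in this plan is the identification of the SMB limit $g$ with the fiber entropy of the ergodic component at $(u,y)$: this requires a careful comparison of $\lambda_y$ and $\eta_y$ under iteration by $R\times T$, and is exactly the content of Lemma~3.2(iii) of Ledrappier--Walters \cite{LedrappierWalters1977}. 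Inspection of their proof confirms that only the disintegration theorem and the pointwise ergodic theorem for $R\times T$ are needed, both of which remain available in the Polish setting considered here, so the compactness hypothesis on $\mathcal{Y}$ in their statement plays no role.
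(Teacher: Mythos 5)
Your proposal ultimately rests on citing Lemma 3.2(iii) of Ledrappier--Walters \cite{LedrappierWalters1977} together with the observation that its proof uses only disintegration and the pointwise ergodic theorem and hence survives the non-compactness of $\mathcal{Y}$, which is exactly how the paper treats this lemma (it offers no independent proof). The SMB and refining-partition scaffolding you add is harmless but essentially redundant, since the step you defer to Ledrappier--Walters is already the full harmonicity statement; so the proposal is correct and takes the same route as the paper.
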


Next, we note that fiber entropy function is upper semi-continuous in our setting. The proof of Lemma 2.2 in  \cite{Walters1986} establishes upper semi-continuity of fiber entropy in a setting closely related to ours. By making only minor modifications of that proof, one may adapt it to our setting and prove the following lemma.

\begin{lemma} \label{Lemma:FiberEntropyIsUSC}
Let $\Theta$, $(\mathcal{X},S)$, and $(\mathcal{Y},T,\nu)$ be as in the introduction, and let $R = \Id \times S$ act on the product space
$\mathcal{U} = \Theta \times \mathcal{X}$. Then the map $\lambda \mapsto h^{\nu}(\lambda)$ 
from $\mathcal{J}(R : \nu)$ to $\R$ is upper semi-continuous. 
\end{lemma}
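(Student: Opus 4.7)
The plan is to adapt the approach of Walters [\cite{Walters1986}, Lemma 2.2] to our setting, where $\mathcal{Y}$ is only Polish (not compact) and $T$ is only Borel (not continuous). The key idea is to perform two reductions so that upper semi-continuity reduces to an infimum of (semi)continuous functions for a fixed partition. First I would exploit the fact that $R$ acts trivially on the $\Theta$ factor to project the problem down to $\mathcal{X} \times \mathcal{Y}$. Let $\pi : \Theta \times \mathcal{X} \times \mathcal{Y} \to \mathcal{X} \times \mathcal{Y}$ be the natural projection, and set $\bar{\lambda} = \pi_* \lambda \in \mathcal{J}(S : \nu)$. I claim $h^{\nu}(\lambda) = h^{\nu}(\bar{\lambda})$. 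For a product partition $\xi \times \alpha$ of $\Theta \times \mathcal{X}$, the triviality of the $\Theta$-action gives $(\xi \times \alpha)_n = \xi \times \alpha_n$, and the chain rule for conditional entropy yields
\begin{equation*}
H(\lambda, (\xi \times \alpha)_n \mid \mathcal{Y}) = H(\lambda, \xi \mid \mathcal{Y}) + H(\lambda, \alpha_n \mid \xi \vee \mathcal{Y}),
\end{equation*}
so after dividing by $n$ the $\xi$-term vanishes. Combined with monotonicity of $h^{\nu}(\lambda, \cdot)$ in the partition, this forces $h^{\nu}(\lambda, \xi \times \alpha) = h^{\nu}(\lambda, \{\Theta\} \times \alpha) = h^{\nu}(\bar{\lambda}, \alpha)$. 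Since the algebra of rectangles is Rohlin-dense in the Borel $\sigma$-algebra and $h^{\nu}(\lambda, \cdot)$ is Rohlin-continuous, the sup over all finite partitions reduces to the sup over product partitions, giving $h^{\nu}(\lambda) = h^{\nu}(\bar{\lambda})$. As $\lambda \mapsto \bar{\lambda}$ is weak-$*$ continuous, it suffices to establish USC for $\bar{\lambda} \mapsto h^{\nu}(\bar{\lambda})$ on $\mathcal{J}(S : \nu)$.

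Next I would use the expansivity of the mixing SFT to eliminate the supremum over partitions. Let $\alpha$ be the partition of $\mathcal{X}$ into clopen cylinders $[a] = \{x : x_0 = a\}$ indexed by $a \in \mathcal{A}$. Since $(\mathcal{X}, S)$ is expansive (with expansive constant $\tfrac{1}{2}$ in the natural metric), the iterates $\alpha_n$ generate the Borel $\sigma$-algebra of $\mathcal{X}$, and a standard Kolmogorov--Sinai argument adapted to the fiber setting (following \cite{Kifer2001}) gives $h^{\nu}(\bar{\lambda}, \alpha) = h^{\nu}(\bar{\lambda})$ for every $\bar{\lambda} \in \mathcal{J}(S : \nu)$. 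Hence it suffices to show that $\bar{\lambda} \mapsto h^{\nu}(\bar{\lambda}, \alpha) = \inf_n \tfrac{1}{n} H(\bar{\lambda}, \alpha_n \mid \mathcal{Y})$ is USC for this specific $\alpha$.

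The final step is to exhibit $H(\bar{\lambda}, \alpha_n \mid \mathcal{Y})$ as an infimum of continuous functions of $\bar{\lambda}$. I would use the standard identity
\begin{equation*}
H(\bar{\lambda}, \alpha_n \mid \mathcal{Y}) = \inf_{\beta} H(\bar{\lambda}, \alpha_n \mid \beta),
\end{equation*}
where $\beta$ ranges over finite Borel partitions of $\mathcal{Y}$ whose cells have $\nu$-null boundary. For each such $\beta$, since the cells of $\alpha_n$ are clopen in $\mathcal{X}$ and since every $\bar{\lambda} \in \mathcal{J}(S : \nu)$ has $\mathcal{Y}$-marginal $\nu$, the portmanteau theorem ensures that the joint measures $\bar{\lambda}(C \cap \pi_{\mathcal{Y}}^{-1}(B))$ for $C \in \alpha_n$ and $B \in \beta$ depend continuously on $\bar{\lambda}$, which makes $\bar{\lambda} \mapsto H(\bar{\lambda}, \alpha_n \mid \beta)$ continuous. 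Taking the infimum over $\beta$ preserves upper semi-continuity, and a further infimum over $n$ concludes the proof.

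The main technical obstacle is the identity $H(\bar{\lambda}, \alpha_n \mid \mathcal{Y}) = \inf_{\beta} H(\bar{\lambda}, \alpha_n \mid \beta)$ in the non-compact Borel setting: one must verify that the family of $\nu$-continuity partitions of $\mathcal{Y}$ is rich enough to approximate the conditional entropy from above. This follows because $\mathcal{Y}$ is Polish and $\nu$ is a Borel probability measure, so there exists a countable algebra of $\nu$-continuity sets that generates $\mathcal{B}_{\mathcal{Y}}$ modulo $\nu$; standard arguments then allow us to construct the required refining sequence of partitions. A secondary concern is the Rohlin-density argument in Step 1, which requires care because generic finite measurable partitions of $\Theta \times \mathcal{X}$ need not be refined by any product partition, but can be approximated in $L^1(\lambda)$ by finite unions of rectangles to any precision, and $h^{\nu}(\lambda, \cdot)$ is $1$-Lipschitz in the Rohlin metric.
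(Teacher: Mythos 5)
Your proposal is correct and takes essentially the same route as the paper, which simply invokes Lemma 2.2 of \cite{Walters1986} and adapts it: reduce to the $\mathcal{X}$-factor since the identity action on $\Theta$ contributes no fiber entropy, fix the clopen cylinder partition via the generator/expansivity argument, and realize each conditional entropy $H(\cdot,\alpha_n \mid \mathcal{Y})$ as an infimum of weak$^*$-continuous functions using finite partitions of $\mathcal{Y}$ with $\nu$-null boundaries, so that the infimum over $n$ is upper semi-continuous. Two phrases deserve more care but do not affect validity: $h^{\nu}(\lambda,\cdot)$ is not literally $1$-Lipschitz in the Rokhlin metric (the standard estimate $h^{\nu}(\lambda,\gamma) \leq h^{\nu}(\lambda,\gamma') + H(\lambda,\gamma \mid \gamma')$ with the usual modulus of continuity for partitions with a bounded number of cells is what you need), and it is the two-sided iterates $\bigvee_{|k| \leq n} S^{-k}\alpha$, not the forward blocks $\alpha_n$, that generate the Borel $\sigma$-algebra of the two-sided SFT, after which the relative Kolmogorov--Sinai argument you cite applies.
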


\subsection{Divergence terms and average information} \label{Sect:DivergenceTerm}

Define
\begin{equation*}
L( \eta : \gamma \mid \alpha) = \left\{ \begin{array}{ll} 
                                                        \sum_{C \in \alpha} \eta(C) \log \gamma(C), & \text{ if } \eta \prec_{\alpha} \gamma \\
                                                        -\infty, & \text{ otherwise},
                                                \end{array}
                                                \right.
\end{equation*}
where $0 \cdot \log 0 = 0$ by convention. With these definitions, we always have
\begin{equation} \label{Eqn:Rain}
- KL( \eta : \gamma \mid \alpha) = H(\eta, \alpha) + L(\eta : \gamma \mid \alpha).
\end{equation}
Recall that $H(\eta,\alpha)$ may be interpreted as the expected information of $\eta$ under the partition $\alpha$, 
where the expectation is with respect to $\eta$. 
In contrast, $-L(\eta : \gamma \mid \alpha)$ may be interpreted as the expected information of $\gamma$ under 
the partition $\alpha$, where the expectation is again taken with respect to $\eta$. 
In what follows, if $\alpha$ is a partition of a space $\mathcal{U}$ and $u \in \mathcal{U}$, we 
let $\alpha(u)$ denote the partition element containing $u$.
Here we restate and then prove Lemma \ref{Lemma:Dinterpretation}.

\begin{lemma} \label{Lemma:DinterpretationTakeTwo}
Let $f : \mathcal{X} \to \R$ be a H\"{o}lder continuous potential on a mixing SFT $\mathcal{X}$ with associated Gibbs measure $\mu$. Let $\alpha$ be the partition of $\mathcal{X}$ into cylinder sets of the form $x[0]$, and let $\lambda \in \mathcal{J}(S : \nu)$ be ergodic. Then for $\nu$-almost every $y \in \mathcal{Y}$,
\begin{equation*}
\lim_n \frac{1}{n}KL( \lambda_y : \mu \mid \alpha_n) = \mathcal{P}(f) - \biggl( h^{\nu}(\lambda) + \int f \, d\lambda \biggr).
\end{equation*}
\end{lemma}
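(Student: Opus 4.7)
My plan is to split the quantity $KL(\lambda_y : \mu \mid \alpha_n)$ into an entropy piece and an average log-measure piece using the identity
\begin{equation*}
KL(\lambda_y : \mu \mid \alpha_n) \ = \ - H(\lambda_y, \alpha_n) \, - \, L(\lambda_y : \mu \mid \alpha_n),
\end{equation*}
which was recorded in equation (\ref{Eqn:Rain}). I then handle each piece separately and pass to the limit after dividing by $n$. Note that under the uniform Gibbs property one has $\lambda_y \prec_{\alpha_n} \mu$ trivially (all cylinders have positive $\mu$-mass), so the $-\infty$ case in the definition of $L$ never occurs.

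For the entropy piece, I would invoke the ergodic characterization of fiber entropy recalled in Section \ref{Sect:FiberEntropy}: since $\lambda$ is ergodic and $\alpha$ is a finite measurable partition of $\mathcal{X}$, for $\nu$-almost every $y$,
\begin{equation*}
\lim_n \frac{1}{n} H(\lambda_y, \alpha_n) \ = \ h^{\nu}(\lambda, \alpha).
\end{equation*}
Because $\alpha$ is the time-zero cylinder partition of the SFT $\mathcal{X}$, it is a generator for the shift $S$, so by the Kolmogorov--Sinai theorem (in its fiber-entropy version) $h^\nu(\lambda,\alpha) = h^\nu(\lambda)$. This identifies the limit of the entropy term as $h^\nu(\lambda)$.

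For the $L$ term, the crucial observation is that the element of $\alpha_n$ containing $x$ is precisely the cylinder $x[0,n-1]$, so that
\begin{equation*}
L(\lambda_y : \mu \mid \alpha_n) \ = \ \int \log \mu\bigl(x[0,n-1]\bigr) \, d\lambda_y(x).
\end{equation*}
The Gibbs property (\ref{Eqn:GibbsProp}) gives the pointwise estimate
\begin{equation*}
\log \mu\bigl(x[0,n-1]\bigr) \ = \ - \mathcal{P}(f) \, n \, + \, \sum_{k=0}^{n-1} f(S^k x) \, + \, O(1),
\end{equation*}
with the $O(1)$ error uniform in $x$ (bounded by $\log K$). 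Dividing by $n$ and applying Lemma \ref{Lemma:IntegralDisintegration} to the continuous (hence bounded, by compactness of $\mathcal{X}$) function $f$ yields, for $\nu$-almost every $y$,
\begin{equation*}
\lim_n \frac{1}{n} L(\lambda_y : \mu \mid \alpha_n) \ = \ -\mathcal{P}(f) \, + \, \int f \, d\lambda.
\end{equation*}
Combining these two limits with the identity from (\ref{Eqn:Rain}) gives the claimed formula.

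The only nontrivial point is ensuring that all three almost-sure statements (fiber entropy convergence, the Birkhoff-type convergence from Lemma \ref{Lemma:IntegralDisintegration}, and the integrability needed to apply it) hold on a common full-measure set of $y$; this is immediate since each is asserted $\nu$-almost surely and we are intersecting finitely many full measure sets. The uniform $O(1)$ Gibbs error is what makes the pointwise-to-integral transition painless, and is the reason for working with Gibbs measures rather than arbitrary invariant measures; I would flag this as the place where the hypothesis on $f$ is really used.
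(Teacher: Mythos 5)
Your proposal is correct and follows essentially the same route as the paper's proof: the same decomposition of $KL$ via (\ref{Eqn:Rain}), the same use of the uniform $\log K$ bound from the Gibbs property combined with Lemma \ref{Lemma:IntegralDisintegration} for the $L$ term, and the same ergodic fiber-entropy convergence with $\alpha$ a generating partition for the entropy term. The only differences are cosmetic (order of the two pieces and the explicit remark that $\lambda_y \prec_{\alpha_n} \mu$ always holds), so no further changes are needed.
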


\begin{proof}
Recall that by the Gibbs property for $\mu$, for any $n \geq 1$ and $x$ in $\mathcal{X}$, we have
\begin{equation*}
K^{-1} \leq \frac{\mu(\alpha_n(x))}{- n \, \mathcal{P}(f) + \sum_{k=0}^{n-1} f \circ S^k(x)} \leq K.
\end{equation*}
Taking logarithms yields the bound
\begin{equation*}
\biggl| \log\bigl(\mu(\alpha_n(x))\bigr) - \bigl( - n \, \mathcal{P}(f) + f_n(x) \bigr) \biggr| \leq \log K.
\end{equation*}
As this inequality is uniform in $x$, we may integrate with respect to $\lambda_y$ to obtain
\begin{equation*}
\biggl| L(\lambda_y : \mu \mid \alpha_n) - \biggl( - n \, \mathcal{P}(f) + \int f_n \, d\lambda_y \biggr) \biggr| \leq \log K.
\end{equation*}
Dividing by $n$ and applying Lemma \ref{Lemma:IntegralDisintegration} gives
\begin{equation} \label{Eqn:Car}
\lim_n \frac{1}{n} L(\lambda_y : \mu \mid \alpha_n) = - \mathcal{P}(f) + \int f \, d\lambda.
\end{equation}

It follows from (\ref{Eqn:Rain}) that
$
KL( \lambda_y : \mu \mid \alpha_n) = - H( \lambda_y, \alpha_n) - L( \lambda_y : \mu \mid \alpha_n)
$.
Since $\lambda$ is ergodic, for $\nu$-almost every $y$, we have 
$n^{-1} H(\lambda_y, \alpha_n) \to h^{\nu}(\lambda,\alpha) = h^{\nu}(\lambda)$,
where the equality is a result of the fact that $\alpha$ is a generating partition for $(\mathcal{X},S)$.
Combining this fact with (\ref{Eqn:Car}), we find that for $\nu$-almost every $y$,
\begin{align*}
\lim_n \frac{1}{n} KL( \lambda_y : \mu \mid \alpha_n)  & = - h^{\nu}( \lambda) - \biggl( - \mathcal{P}(f) + \int f \, d\lambda \biggr)  
\end{align*}
as desired.
\end{proof}

Now we prove a lemma that guarantees that $D(\lambda : \mu_{\theta}) \geq 0$.

\begin{lemma} \label{Lemma:JoiningsVP}
For each $\theta \in \Theta$ and $\lambda \in \mathcal{J}(S,\nu)$,
\begin{equation*}
\int f_{\theta} \, d\lambda + h^{\nu}(\lambda) \leq \mathcal{P}(f_{\theta}).
\end{equation*}
\end{lemma}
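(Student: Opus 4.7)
The plan is to reduce the inequality to the standard variational principle for pressure (\ref{Eqn:VPTP}) on the $\mathcal{X}$-marginal of $\lambda$. Let $\mu$ denote the $\mathcal{X}$-marginal of $\lambda$, that is, $\mu(A) = \lambda(A \times \mathcal{Y})$ for Borel $A \subset \mathcal{X}$. Since $\lambda$ is invariant under $S \times T$ and projection to the first coordinate intertwines $S \times T$ with $S$, the measure $\mu$ lies in $\mathcal{M}(\mathcal{X}, S)$. Moreover, because $f_\theta : \mathcal{X} \to \R$ depends only on the $\mathcal{X}$-coordinate, we have $\int f_\theta \, d\lambda = \int f_\theta \, d\mu$.

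The key step is to show $h^{\nu}(\lambda) \leq h(\mu)$. For any finite measurable partition $\alpha$ of $\mathcal{X}$, using the disintegration $\lambda = \int \lambda_y \otimes \delta_y \, d\nu(y)$, one has $\mu(C) = \int \lambda_y(C) \, d\nu(y)$ for each $C \in \alpha$. The function $x \mapsto -x \log x$ is concave on $[0,1]$, so by Jensen's inequality
\begin{equation*}
-\mu(C) \log \mu(C) \, \geq \, -\int \lambda_y(C) \log \lambda_y(C) \, d\nu(y).
\end{equation*}
Summing over $C \in \alpha$ gives $H(\mu, \alpha) \geq H(\lambda, \alpha \mid \mathcal{Y})$. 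Applying this to the refined partition $\alpha_n = \bigvee_{k=0}^{n-1} S^{-k}\alpha$, dividing by $n$, and letting $n \to \infty$ yields $h(\mu,\alpha) \geq h^{\nu}(\lambda,\alpha)$. Taking a supremum over finite measurable partitions $\alpha$ of $\mathcal{X}$ produces $h(\mu) \geq h^{\nu}(\lambda)$.

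To conclude, combine these ingredients with the variational principle (\ref{Eqn:VPTP}) for the pressure of the H\"{o}lder potential $f_\theta$:
\begin{equation*}
\int f_\theta \, d\lambda + h^{\nu}(\lambda) \, \leq \, \int f_\theta \, d\mu + h(\mu) \, \leq \, \mathcal{P}(f_\theta),
\end{equation*}
which is exactly the desired inequality.

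The only nontrivial step is the fiber entropy bound $h^{\nu}(\lambda) \leq h(\mu)$, but this is essentially immediate from concavity once the disintegration of $\lambda$ over $\nu$ is in place. Alternatively, one could invoke an Abramov--Rokhlin type identity $h_{S \times T}(\lambda) = h^{\nu}(\lambda) + h_T(\nu)$ together with the standard subadditivity $h_{S \times T}(\lambda) \leq h_S(\mu) + h_T(\nu)$ for joinings; however, the direct Jensen argument above is more elementary and self-contained.
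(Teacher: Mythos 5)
Your proposal is correct and follows essentially the same route as the paper: pass to the $\mathcal{X}$-marginal $\mu$ of $\lambda$, bound $h^{\nu}(\lambda) \leq h(\mu)$, and invoke the variational principle (\ref{Eqn:VPTP}) for $\mathcal{P}(f_{\theta})$. The only difference is cosmetic: where the paper cites the conditional-entropy inequality $h^{\nu}(\lambda) \leq h^{\nu}(\mu \otimes \nu) = h(\mu)$ as an elementary information-theoretic fact, you prove it directly via Jensen's inequality applied to the disintegration, which is a perfectly valid (and self-contained) way to supply that step.
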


\begin{proof}
Let $\mu$ be the $\mathcal{X}$-marginal of $\lambda$. 
Then $h^{\nu}(\lambda) \leq h^{\nu}(\mu \otimes \nu) = h(\mu)$, where the inequality follows
from elementary information theoretic facts concerning conditional entropy  (see \cite{Cover2012})
and the equality is a basic property of fiber entropy.
Then by the variational principle for pressure (\ref{Eqn:VPTP}),
\begin{equation*}
\int f_{\theta} \, d\lambda + h^{\nu}(\lambda) \leq \int f_{\theta} \, d\mu + h(\mu) \leq \mathcal{P}(f_{\theta}),
\end{equation*}
as desired.
\end{proof}



We now establish a lemma that is used in the proof of Theorem \ref{Thm:Pressure}. 
This result allows us to approximate the expected information in the prior $P_0$, where the expectation is with respect to an 
arbitrary measure, in terms of an average of a continuous function. 
These types of estimates are available precisely because our model class consists of 
Gibbs measures: indeed, they do not hold for arbitrary invariant measures for dynamical systems.

For any Borel probability measure $\eta$ on $\Theta \times \cX$, let $\eta_n$ denote its time-average up to time $n$:
\begin{equation*}
\eta_n(E) = \frac{1}{n} \sum_{k=0}^{n-1} \eta\bigl( (\Id \times S)^{-k}E \bigr),
\end{equation*}
where $\Id : \Theta \to \Theta$ is the identity.

\begin{lemma} \label{Lemma:Sunshine}
Let $K$ be the constant in the uniform Gibbs property (\ref{Eqn:UniformGibbs}). 
For any $\epsilon >0$ there exists $\delta >0$ such that if the diameter of $\alpha$ is less than $\delta$ and $\beta$ is the partition of $\mathcal{X}$ into cylinder sets of the form $x[0]$, then for any Borel probability measure $\eta$ on $\Theta \times \cX$, and any $n \geq 0$, 
\begin{equation*}
\biggl| \frac{1}{n} L( \eta : P_0 \mid \alpha \times \beta_n ) - \frac{1}{n} \int \bigl(f_{\theta}(x) - \mathcal{P}(f_{\theta}) \bigr) \, d\eta_n(\theta,x)\biggr|  \leq \epsilon + \frac{ \log K}{n} .
\end{equation*}
\end{lemma}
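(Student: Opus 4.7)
The plan is to combine the uniform Gibbs property (\ref{Eqn:UniformGibbs}) with the H\"older regularity of the family $\{f_\theta\}$ to estimate $\log P_0(A \times B)$ cellwise, and then assemble the estimates to produce the desired integral against the time-average $\eta_n$.

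First I use compactness of $\Theta$ together with the hypotheses that $\theta \mapsto f_\theta$ is continuous into $(C^r(\mathcal{X}), \|\cdot\|_r)$ and $\theta \mapsto \mathcal{P}(f_\theta)$ is continuous. Uniform continuity lets me choose $\delta > 0$ so that $d_\Theta(\theta, \theta') < \delta$ implies $\sup_x |f_\theta(x) - f_{\theta'}(x)| < \epsilon/2$ and $|\mathcal{P}(f_\theta) - \mathcal{P}(f_{\theta'})| < \epsilon/2$. For any $A \in \alpha$ with $\diam(A) < \delta$, any cylinder $B = x[0,n-1] \in \beta_n$, and any reference pair $(\theta_A, x_B) \in A \times B$, applying the pointwise uniform Gibbs property to $\mu_\theta(B)$ and integrating over $\pi_0|_A$ yields
\begin{equation*}
\bigl| \log P_0(A \times B) - \log \pi_0(A) - \bigl(-n \mathcal{P}(f_{\theta_A}) + f_{\theta_A}^{(n)}(x_B)\bigr) \bigr| \leq \log K + n \epsilon,
\end{equation*}
since the $\theta$-variations of $\mathcal{P}(f_\theta)$ and of $f_\theta^{(n)}(x)$ each contribute at most $n\epsilon/2$ over $A$.

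Weighting by $\eta(A \times B)$ and summing gives
\begin{equation*}
L(\eta : P_0 \mid \alpha \times \beta_n) = \sum_A \eta(A \times \mathcal{X}) \log \pi_0(A) + \int \bigl(-n \mathcal{P}(f_\theta) + f_\theta^{(n)}(x)\bigr) d\eta + \mathcal{E}_n,
\end{equation*}
where $|\mathcal{E}_n| \leq \log K + n \epsilon + C_r$ and $C_r$ is a uniform constant bounding the H\"older variation of the Birkhoff sum $f_\theta^{(n)}$ across any $\beta_n$-cylinder; this bound comes from the standard estimate $d_\mathcal{X}(S^k x, S^k x') \leq 2^{-\min(k+1,\,n-k)}$ for $x, x' \in B$, combined with the summability of $2^{-r\min(k+1,\,n-k)}$ over $k$. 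Dividing by $n$, the prior term and $C_r$ contribute $O(1/n)$ and are absorbed into $\log K / n$ (after, if necessary, slightly enlarging $K$), while the main term becomes $\int (f_\theta(x) - \mathcal{P}(f_\theta)) \, d\eta_n(\theta,x)$ upon unpacking the definition of $\eta_n$ as the $n$-step time-average of $\eta$.

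The main technical care lies in organizing three distinct error sources that must all be controlled uniformly in $\eta$, $n$, and the choice of partition $\alpha$ (modulo the diameter constraint): the $\log K$ term from the Gibbs property itself, the linear-in-$n$ contribution $n \epsilon$ from continuity across $\alpha$-pieces, and the uniformly bounded H\"older variation $C_r$ across cylinders in $\beta_n$. The H\"older bound is the only one requiring a genuinely nontrivial geometric argument and crucially uses the exponent $r > 0$ in the definition of $C^r(\mathcal{X})$. Once these are in hand, the identification of the main term with $\int (f_\theta - \mathcal{P}(f_\theta)) \, d\eta_n$ is immediate from the defining formula for $\eta_n$.
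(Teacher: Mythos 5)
Your overall route is the same as the paper's: use uniform continuity of $\theta \mapsto f_{\theta}$ and $\theta \mapsto \mathcal{P}(f_{\theta})$ on the compact set $\Theta$, together with the uniform Gibbs property, to estimate $\log P_0$ of each cell of $\alpha \times \beta_n$, integrate the cellwise estimate against $\eta$, and identify the Birkhoff-sum term with $n\int (f_{\theta}(x) - \mathcal{P}(f_{\theta}))\, d\eta_n$. One structural difference is minor: your distortion constant $C_r$ is not needed, because (\ref{Eqn:UniformGibbs}) holds at \emph{every} point $x$ of the cylinder, not only at a chosen reference point $x_B$, so the cellwise estimate holds simultaneously at all $(\theta,x)$ in the cell with total error $\log K + \epsilon n$ and no bounded-distortion estimate for Birkhoff sums ever enters; this is exactly how the paper proceeds. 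Carrying $C_r$ is harmless in principle, but absorbing it by ``slightly enlarging $K$'' is not available to you: $K$ is fixed in the statement as the uniform Gibbs constant, so what you would prove is not the stated inequality.

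The genuine gap is your treatment of the prior term $\sum_A \eta(A\times\mathcal{X})\log\pi_0(A)$. You claim it is $O(1)$, hence $O(1/n)$ after division, and absorbable into $\log K/n$. But nothing bounds $\pi_0(A)$ away from zero uniformly over partitions $\alpha$ of diameter less than $\delta$: cells may have arbitrarily small, or even zero, prior mass, and $\eta$ is an arbitrary Borel probability measure, so this sum admits no lower bound independent of $\alpha$ and $\eta$ (it can be $-\infty$), and certainly cannot be hidden in the fixed constant $\log K$. Note the asymmetry: since $\pi_0(A)\le 1$, the prior term only threatens the lower bound on $L(\eta : P_0 \mid \alpha\times\beta_n)$; the upper bound direction of your argument is fine. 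The paper's proof sidesteps this by asserting the cellwise two-sided bound $K^{-1}e^{-\epsilon n} \le P_0\bigl((\alpha\times\beta_n)(\theta,x)\bigr)/\exp\bigl(-n\mathcal{P}(f_{\theta})+\sum_{k=0}^{n-1} f_{\theta}(S^kx)\bigr) \le Ke^{\epsilon n}$ with no $\pi_0$-mass factor at all; your decomposition makes that factor explicit, which is more honest, but you must then actually dispose of it --- for instance by restricting to partitions whose cells have prior mass bounded below and accepting a partition-dependent $O(1/n)$ error (which is all that the applications in Propositions \ref{Prop:LB} and \ref{Prop:UB} require, since there the partition is fixed while $n\to\infty$). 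The blanket ``$O(1)$, enlarge $K$'' step is where your write-up fails.
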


\begin{proof}
Let $\epsilon > 0$. By the uniform continuity of $f_{\theta}$ and $\mathcal{P}(f_{\theta})$ in $\theta$
and the uniform Gibbs property, there exists $\delta > 0$ such that if the diameter of $\alpha$ is less than 
$\delta$ and $\beta$ is the partition of $\mathcal{X}$ into sets of the form $x[0]$, then for all 
$\theta \in \Theta$, $x \in \mathcal{X}$, and $n \geq 1$,
\begin{equation*}
K^{-1} \exp(- \epsilon n) \leq 
\frac{P_0( (\alpha \times \beta_n) (\theta,x))}{\exp \biggl( - n \,\mathcal{P}(f_{\theta}) + \sum_{k=0}^{n-1} f_{\theta}(S^kx) \biggr) } 
\leq K \exp( \epsilon n).
\end{equation*}
Taking logarithms and dividing by $n$, we obtain the inequality
\begin{equation*}
\biggl|\frac{1}{n} \log P_0( (\alpha \times \beta_n) (\theta,x)) - 
\frac{1}{n} \sum_{k=0}^{n-1} f_{\theta}(S^kx) + \mathcal{P}(f_{\theta}) \biggr| \leq \epsilon + \frac{\log K}{n},
\end{equation*}
which is uniform over $(\theta,x) \in \Theta \times \cX$.
Now let $\eta$ be any Borel probability measure on $\Theta \times \cX$. Then by integrating with respect to $\eta$, we see that
\begin{align*}
\biggl|  \frac{1}{n} L( \eta : P_0 \mid \alpha \times \beta_n )  - \frac{1}{n} \int \bigl(f_{\theta}(x) - \mathcal{P}(f_{\theta}) \bigr) \, d\eta_n(\theta,x)\biggr| 
 \leq \epsilon + \frac{ \log K}{n} .
\end{align*}
\end{proof}

%
%

\section{Semicontinuity of the rate function and $\Theta_{\min}$} \label{Sect:Regularity}

\begin{proposition} \label{Prop:LSC}
The map $V : \Theta \to \R$ defined in Definition \ref{Defn:V} is lower semi-continuous, and hence the set $\Theta_{\min}$ is compact and non-empty. 
\end{proposition}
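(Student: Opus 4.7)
The plan is to rewrite $V(\theta)$ as the infimum over $\lambda$ of a single two-variable objective, verify that this objective is lower semi-continuous in $(\theta,\lambda)$ jointly, and then exploit compactness of $\mathcal{J}(S:\nu)$ to pass to the infimum. First, unfolding the definition of $D(\lambda : \mu_\theta)$, I would write
\begin{equation*}
V(\theta) = \inf_{\lambda \in \mathcal{J}(S:\nu)} G(\theta,\lambda),
\qquad
G(\theta,\lambda) = \int \ell(\theta,\cdot,\cdot)\,d\lambda + \mathcal{P}(f_\theta) - \int f_\theta\,d\lambda - h^\nu(\lambda).
\end{equation*}

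The next step is to check that $G$ is lower semi-continuous on $\Theta \times \mathcal{J}(S:\nu)$. The term $\mathcal{P}(f_\theta)$ is continuous in $\theta$ by the properties of regular families. The term $\int f_\theta\,d\lambda$ is jointly continuous: $\theta \mapsto f_\theta$ is continuous in the H\"older norm (hence uniformly), and $f_\theta$ is continuous on the compact SFT $\mathcal{X}$, so weak$^*$ convergence of $\lambda_n \to \lambda$ together with uniform convergence of $f_{\theta_n}$ delivers the limit. For the loss term, if $\theta_n \to \theta$ and $\lambda_n \to \lambda$ weakly in $\mathcal{J}(S:\nu)$, I would split
\begin{equation*}
\int \ell(\theta_n,\cdot)\,d\lambda_n - \int \ell(\theta,\cdot)\,d\lambda
= \int (\ell(\theta_n,\cdot) - \ell(\theta,\cdot))\,d\lambda_n + \Bigl( \int \ell(\theta,\cdot)\,d\lambda_n - \int \ell(\theta,\cdot)\,d\lambda \Bigr).
\end{equation*}
Condition (iii) bounds the first bracket by $\int \rho_\delta\,d\nu$ for $\theta_n$ eventually within $\delta$ of $\theta$ (using that every $\lambda_n$ has $\mathcal{Y}$-marginal $\nu$), which tends to $0$ as $\delta \to 0$. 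For the second bracket, the continuity of $\ell(\theta,\cdot)$ and the uniform domination by $\ell^\ast \in L^1(\nu)$ (condition (ii)) allow a standard truncation argument: bounded continuous truncations pass to the limit under weak$^\ast$ convergence, and the truncation error is uniformly controlled by $\int_{\{\ell^\ast > M\}} \ell^\ast\,d\nu$. Finally, $-h^\nu(\lambda)$ is lower semi-continuous by Lemma \ref{Lemma:FiberEntropyIsUSC}. Summing a continuous function and a lower semi-continuous function yields lower semi-continuity of $G$.

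With joint lower semi-continuity of $G$ in hand, the lower semi-continuity of $V$ follows by a routine compactness argument. Given $\theta_n \to \theta$, pick $\lambda_n \in \mathcal{J}(S:\nu)$ with $G(\theta_n,\lambda_n) \leq V(\theta_n) + 1/n$; such minimizers (or near-minimizers) exist because $\mathcal{J}(S:\nu)$ is compact by the structure theorem quoted in Section \ref{Sect:ErgDecomp} and $G(\theta_n,\cdot)$ is lower semi-continuous on it. Pass to a subsequence along which $\lambda_n \to \lambda^\ast \in \mathcal{J}(S:\nu)$ in the weak$^\ast$ topology. Then
\begin{equation*}
\liminf_n V(\theta_n) = \liminf_n G(\theta_n,\lambda_n) \geq G(\theta,\lambda^\ast) \geq V(\theta),
\end{equation*}
proving lower semi-continuity.

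The final assertions about $\Theta_{\min}$ are now immediate: since $\Theta$ is compact and $V$ is lower semi-continuous on it, $V$ attains its infimum, so $\Theta_{\min}$ is non-empty; and $\Theta_{\min} = V^{-1}(\{\min V\})$ is a closed subset of a compact space, hence compact. I expect the main technical obstacle to be the joint lower semi-continuity of $G$, and within that the handling of the (only integrably dominated, not bounded) loss term $\int \ell\,d\lambda$; everything else either comes straight from the regular family hypothesis or from Lemma \ref{Lemma:FiberEntropyIsUSC}.
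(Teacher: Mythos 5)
Your argument is correct, and it reaches the conclusion by a somewhat different route than the paper. The paper lifts everything to the product system: it sets $\mathcal{U} = \Theta \times \mathcal{X}$, $R = \Id \times S$, defines $\psi(\theta,x,y) = -\ell(\theta,x,y) + f_{\theta}(x) - \mathcal{P}(f_{\theta})$ and $F(\lambda) = \int \psi \, d\lambda + h^{\nu}(\lambda)$ on $\mathcal{J}(R:\nu)$, observes that $F$ is upper semi-continuous, and then notes that $V(\theta)$ is the negative of $\theta \mapsto \sup\{F(\lambda) : \proj_{\Theta}(\lambda) = \delta_{\theta}\}$, which is upper semi-continuous because it is a fiber-constrained supremum of an upper semi-continuous function over the continuous surjection $\proj_{\Theta}$ between compact spaces; no near-minimizers are ever chosen. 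You instead keep $\theta$ and $\lambda \in \mathcal{J}(S:\nu)$ as separate variables, prove joint sequential lower semi-continuity of the two-variable objective, and extract a convergent subsequence of near-minimizers; this is a legitimate and more hands-on version of the same compactness mechanism. Two remarks. First, your treatment of the loss term is actually \emph{more} careful than the paper's: since $\ell$ is only dominated by $\ell^* \in L^1(\nu)$ and $\mathcal{Y}$ need not be compact, the weak$^*$ convergence $\int \ell(\theta,\cdot)\,d\lambda_n \to \int \ell(\theta,\cdot)\,d\lambda$ genuinely requires the truncation argument you give, exploiting that every joining has $\mathcal{Y}$-marginal exactly $\nu$; the paper folds this into the one-line assertion that $F$ is upper semi-continuous. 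Second, a small point of bookkeeping: Lemma \ref{Lemma:FiberEntropyIsUSC} is stated for $R = \Id \times S$ on $\Theta \times \mathcal{X}$, whereas you invoke upper semi-continuity of $h^{\nu}$ on $\mathcal{J}(S:\nu)$; this is covered either by noting that the same proof applies verbatim, or by identifying $\mathcal{J}(S:\nu)$ with the closed subset of $\mathcal{J}(\Id\times S:\nu)$ having $\Theta$-marginal $\delta_{\theta_0}$ for a fixed $\theta_0$ (which leaves the fiber entropy unchanged), so this is a one-line fix rather than a gap.
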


\begin{proof}
Let $\mathcal{U} = \Theta \times \mathcal{X}$ and let $R : \cU \to \cU$ be given by $R = \Id \times S$, where $\Id$ is
the identity on $\Theta$.
Define $\psi : \mathcal{U} \times \mathcal{Y} \to \R$ by 
\begin{equation*}
\psi(\theta,x,y) = -\ell(\theta,x,y)  + f_{\theta}(x) - \mathcal{P}(f_{\theta}),
\end{equation*}
which is continuous and satisfies $\sup_{u \in \cU} |\psi(u,y)| \leq \psi^* \in L^1(\nu)$.
Finally, define $F: \mathcal{J}(R : \nu) \to \R$ by
\begin{equation*}
F(\lambda) = \int \psi \, d\lambda + h^{\nu}(\lambda).
\end{equation*}
Since $\psi$ is continuous and $h^{\nu}$ is upper semi-continuous (by Lemma \ref{Lemma:FiberEntropyIsUSC}), $F$ is upper semi-continuous. 
Let $\proj_{\Theta} : \mathcal{J}(R : \nu) \to \mathcal{M}(\Theta)$ be defined by setting 
$\proj_{\Theta}(\lambda)$ to be the $\Theta$-marginal of $\lambda$, which is a continuous surjection of 
compact spaces. One may easily check from the definition of upper semicontinuity that the function
\begin{equation*}
\theta \mapsto \sup \bigl\{ F (\lambda) : \proj_{\Theta}(\lambda) = \delta_{\theta} \bigr\}
\end{equation*}
is also upper semicontinuous. 
Since $V(\theta)$ is the negative of this function, 
we conclude that $V$ is lower semi-continuous.

For the second part of the proposition, we note that $\Theta_{\min}$ is the $\argmin$ of the lower semi-continuous function $V$ on the compact set $\Theta$, and hence it is non-empty and compact.
\end{proof}

\vskip.1in

%
%
%

\section{Convergence of the partition function and a variational principle}

In this section, we prove Theorem \ref{Thm:Pressure}, which concerns the convergence of the average log
normalizing constant (partition function) $n^{-1} \log Z_n$. 
The starting point of the proof, which is an application of the Pressure Lemma, allows us to express the 
main statistical object, the Gibbs posterior distribution, as the solution of a variational problem involving information 
theoretic notions such as entropy and average information, which have long been studied in dynamics. 
The proof of Theorem \ref{Thm:Pressure} follows.

 To ease notation slightly in this section, we let $g = -\ell$ and $g_n = - \ell_n$, where $\ell_n$ is defined in (\ref{Eqn:Ellen}).
We also set $\cU = \Theta \times \mathcal{X}$ and $R(\theta,x) = (\theta,S(x))$.
For $\lambda \in \mathcal{J}( R : \nu)$, we will have use for the notation
\begin{equation*}
G(\lambda) =  \int \bigl( \mathcal{P}(f_{\theta}) - f_{\theta}(x) \bigr) \, d\lambda(\theta,x,y) -  h^{\nu}( \lambda ).
\end{equation*}
Although we do not use this fact, we note that $G(\lambda)$ can be written as an integral over $\theta$ of terms of the form $D( \lambda_{\theta}, \mu_{\theta})$ (as in Definition \ref{Def:D}).
Lemma \ref{Lemma:FiberEntropyIsHarmonic} ensures that $h^{\nu}(\cdot)$ is harmonic, and therefore
the same is true of $G : \mathcal{J}( R : \nu) \to \mathbb{R}$.
In this notation, our goal is to prove
\begin{equation*}
\lim_n \frac{1}{n} \log Z_n = \sup \biggl\{ \int g \, d\lambda - G(\lambda) : \lambda \in \mathcal{J}(R : \nu) \biggr\}.
\end{equation*}

We present the proof in two stages: first we establish that the expression in right-hand side is a lower bound for $\lim_n n^{-1} \log Z_n$, and then we prove that the same expression provides an upper bound.

\subsection{Lower bound} \label{Sect:LB}

The goal of this section is to prove the following result.
\begin{proposition} \label{Prop:LB}
 For $\nu$-almost every $y \in \mathcal{Y}$,
 \begin{equation*}
  \lim_n \frac{1}{n} \log Z_n \geq \sup \biggl\{ \int g \, d\lambda - G(\lambda) : \lambda \in \mathcal{J}(R : \nu) \biggr\},
 \end{equation*}
 where $Z_n = Z_n(y)$.
\end{proposition}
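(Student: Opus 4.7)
The plan is to reduce to a fixed ergodic joining, bound $\log Z_n(y)$ from below via the Pressure Lemma applied to a partition of the form $\alpha\times\beta_n$, and pass to the limit using Lemma \ref{Lemma:Sunshine}, Lemma \ref{Lemma:IntegralDisintegration}, and the SMB-type convergence of fiber entropy recalled in Section \ref{Sect:FiberEntropy}. Write $\Phi(\lambda) := \int g\,d\lambda - G(\lambda) = \int g\,d\lambda + \int\bigl(f_\theta(x) - \mathcal{P}(f_\theta)\bigr)\,d\lambda + h^{\nu}(\lambda)$. This functional is upper semicontinuous on the compact space $\mathcal{J}(R:\nu)$ (by continuity of $g$, $f_\theta$, and $\mathcal{P}(f_\theta)$, together with Lemma \ref{Lemma:FiberEntropyIsUSC}) and harmonic (by Lemma \ref{Lemma:FiberEntropyIsHarmonic} and linearity of integration). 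Hence its supremum is attained, and the ergodic decomposition of a maximizer yields an ergodic maximizer $\lambda$; it therefore suffices to prove $\liminf_n n^{-1}\log Z_n(y) \geq \Phi(\lambda)$ for $\nu$-almost every $y$.

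Fix such a $\lambda$ with disintegration $\lambda = \int \lambda_y\otimes\delta_y\,d\nu(y)$. Given $\epsilon>0$, pick $\delta>0$ as in Lemma \ref{Lemma:Sunshine}, a finite Borel partition $\alpha$ of $\Theta$ of diameter $\leq \delta$, the $0$-coordinate partition $\beta$ of $\mathcal{X}$, and $M\geq 1$ with $2^{-M-1}\leq\delta$. Splitting the integral defining $Z_n(y)$ over cells $C\in\alpha\times\beta_n$ and using $\int_C\exp(g_n)\,dP_0\geq P_0(C)\exp(\inf_C g_n)$, the Pressure Lemma with weights $\lambda_y(C)$ yields
\begin{equation*}
\log Z_n(y) \geq H(\lambda_y,\alpha\times\beta_n) + L(\lambda_y:P_0\mid\alpha\times\beta_n) + \sum_C\lambda_y(C)\inf_C g_n(\cdot,y).
\end{equation*}
After division by $n$, the entropy term converges to $h^{\nu}(\lambda,\alpha\times\beta)$ for $\nu$-a.e. $y$ by the ergodic SMB formula, and in turn to $h^{\nu}(\lambda)$ as $\alpha$ refines (Kolmogorov--Sinai, since $\alpha\times\beta$ becomes generating under the $R$-action). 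The $L$-term differs from $n^{-1}\int(f_\theta-\mathcal{P}(f_\theta))\,d(\lambda_y)_n$ by $\epsilon+\log K/n$ (Lemma \ref{Lemma:Sunshine}), and this time-averaged integral converges to $\int(f_\theta-\mathcal{P}(f_\theta))\,d\lambda$ by Lemma \ref{Lemma:IntegralDisintegration} applied to $(\theta,x,y)\mapsto f_\theta(x)-\mathcal{P}(f_\theta)$.

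The loss term is the main obstacle. For $(\theta,x),(\theta',x')$ in a common cell $A\times B\in\alpha\times\beta_n$, $d_{\Theta}(\theta,\theta')\leq\delta$ and $x_j=x'_j$ for $0\leq j\leq n-1$; hence for $k\in[M,n-1-M]$ the agreement window of $S^k x$ and $S^k x'$ about coordinate $0$ has radius at least $M$, so $d_{\mathcal{X}}(S^k x, S^k x')\leq 2^{-M-1}\leq\delta$, and condition (iii) gives the step oscillation bound $\rho_{\delta}(T^k y)$. For the at most $2M$ boundary indices, only the crude bound $2\ell^*(T^k y)$ is available. Summing and integrating against $\lambda_y$,
\begin{equation*}
\sum_C \lambda_y(C)\inf_C g_n(\cdot,y) \geq \int g_n\,d\lambda_y - \sum_{k=0}^{n-1}\rho_{\delta}(T^k y) - 2\sum_{k\notin[M,n-1-M]}\ell^*(T^k y).
\end{equation*}
By Lemma \ref{Lemma:IntegralDisintegration}, $n^{-1}\int g_n\,d\lambda_y\to\int g\,d\lambda$; by the pointwise ergodic theorem, $n^{-1}\sum\rho_{\delta}(T^k y)\to\int\rho_{\delta}\,d\nu$; and for fixed $M$ the boundary correction vanishes as $n\to\infty$, being a difference of two Ces\`{a}ro averages each converging to $\int\ell^*\,d\nu$.

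Combining the three asymptotics and letting $\epsilon,\delta\to 0$ (refining $\alpha$ so that $h^{\nu}(\lambda,\alpha\times\beta)\uparrow h^{\nu}(\lambda)$, and using condition (iii) to get $\int\rho_{\delta}\,d\nu\to 0$) yields $\liminf_n n^{-1}\log Z_n(y)\geq\Phi(\lambda)=\sup_{\lambda'\in\mathcal{J}(R:\nu)}\Phi(\lambda')$ for $\nu$-a.e. $y$. The central difficulty is precisely the boundary control of the loss term: forward orbits of points in a common $\beta_n$-cell spread apart rapidly near $k=0$ and $k=n-1$, so condition (iii) cannot be invoked there and the integrability envelope $\ell^*$ must handle that $O(M)$-length portion of the ergodic sum, where $M$ must be chosen before $n\to\infty$ and only afterwards sent to infinity through the $\delta\to 0$ limit.
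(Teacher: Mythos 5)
Your proposal is correct, and its skeleton (Pressure Lemma on a product partition, Lemma \ref{Lemma:Sunshine} for the $L$-term, Lemma \ref{Lemma:IntegralDisintegration} plus the ergodic theorem for the time averages, the ergodic fiber SMB formula for the entropy term, and harmonicity to handle non-ergodic joinings) matches the paper's. The genuine difference is the partition of $\mathcal{X}$: you take the one-sided $0$-coordinate partition $\beta$, which forces the boundary analysis you call the central difficulty --- the $\rho_\delta$ oscillation bound is only available for $k \in [M, n-1-M]$, and the $O(M)$ boundary indices must be absorbed by $\ell^*$ via a Ces\`{a}ro argument --- and it also forces you to pass from $h^{\nu}(\lambda,\alpha\times\beta)$ to $h^{\nu}(\lambda)$ by a Kolmogorov--Sinai/generator-type theorem for fiber entropy, a fact the paper never states in that form (though it invokes the analogous claim in the proof of Lemma \ref{Lemma:Dinterpretation}), since $\alpha\times\beta$ does not have small diameter. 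The paper instead works with partitions $\beta$ of $\mathcal{X}$ into \emph{central} words $[x_{-m}^m]$ (Lemma \ref{Lemma:LBpartition}): if $x,x'$ lie in a common cell of $\beta_n$ they agree on $[-m,n-1+m]$, so $S^kx$ and $S^kx'$ agree on $[-m,m]$ for \emph{every} $0\le k\le n-1$; condition (iii) then applies at all times, there is no boundary correction at all, and letting $m$ grow gives partitions of arbitrarily small diameter, so only the small-diameter characterization of $h^{\nu}$ stated in Section \ref{Sect:FiberEntropy} is needed. Your other deviation --- reducing at the outset to a single ergodic maximizer of $\Phi$ via upper semicontinuity, compactness of $\mathcal{J}(R:\nu)$, and harmonicity with the ergodic decomposition --- is a clean alternative to the paper's order (prove the bound for every ergodic joining, then pass to the supremum by harmonicity), and it neatly avoids the issue of the exceptional null set depending on $\lambda$; the upper semicontinuity of $\lambda\mapsto\int g\,d\lambda$ that you use is justified at the same level of rigor as the paper's Proposition \ref{Prop:LSC} (continuity plus the $\ell^*$ envelope and the fixed $\mathcal{Y}$-marginal $\nu$), and could be bypassed entirely by taking a maximizing sequence of ergodic joinings. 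In short: your route is valid and self-aware about where its extra work lies; the paper's two-sided partition trick is precisely what makes that extra work unnecessary.
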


Before proving this proposition, we first establish a lemma.
If $\eta$ is a Borel probability measure on $\Theta \times \cX$ and $\eta(C)>0$, 
then let $\eta_C$ denote the conditional distribution $\eta( \cdot \mid C)$.
Also, we say that $\beta$ is a partition of $\mathcal{X}$ according to central words whenever $\beta = \{ [x_{-m}^m] : x \in \mathcal{X} \}$ for some $m \geq 0$.

\begin{lemma} \label{Lemma:LBpartition}
Let $\alpha$ be a finite measurable partition of $\Theta$ with $\diam(\alpha)<\delta$, 
and let $\beta$ be a partition of $\mathcal{X}$ according to central words 
such that $\diam(\beta) < \delta$. Then for any Borel probability measure 
$\eta$ on $\Theta \times \cX$, any $y \in \mathcal{Y}$, and any $n \geq 1$,
\begin{align*}
 \int g_n(\theta,x,y)  & \, d\eta(\theta,x) - KL(\eta : P_0 \mid \alpha \times \beta_{n})  \\
 & \leq \log \biggl[ \int \exp( g_n(\theta,x,y) ) \, dP_0(\theta,x) \biggr] + \sum_{k=0}^{n-1} \rho_{\delta}\bigl(T^k y \bigr).
\end{align*}
where $\rho_{\delta}$ is the local difference function appearing in property (iii) of the loss.
\end{lemma}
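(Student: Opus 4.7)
The plan is to apply the Pressure Lemma to the partition $\alpha \times \beta_n$, with the key ingredient being an oscillation bound on $g_n(\cdot,\cdot,y)$ over each cell of this partition. I will carry the argument out in three steps.

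First, I would establish the uniform oscillation bound: for any two points $(\theta,x)$ and $(\theta',x')$ in the same cell of $\alpha \times \beta_n$, one has $|g_n(\theta,x,y) - g_n(\theta',x',y)| \leq \sum_{k=0}^{n-1}\rho_\delta(T^ky)$. This is the step where the hypothesis that $\beta$ is a partition into \emph{central} cylinders is essential. If $\beta = \{[x_{-m}^{m}] : x \in \mathcal{X}\}$, then two points sharing a cell of $\beta_n = \bigvee_{k=0}^{n-1} S^{-k}\beta$ agree on coordinates $[-m, n-1+m]$, which implies $(S^kx)_{-m}^m = (S^kx')_{-m}^m$ and hence $d_{\mathcal{X}}(S^kx, S^kx') < \delta$ for every $k = 0,\dots,n-1$. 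Combined with $d_\Theta(\theta,\theta') < \delta$, condition (iii) on the loss gives $|g(\theta, S^kx, T^ky) - g(\theta', S^kx', T^ky)| \leq \rho_\delta(T^ky)$, and summing over $k$ yields the claim. I expect this bookkeeping to be the main technical obstacle, since it is where the geometry of $\beta_n$ interacts with the modulus of continuity along trajectories of $T$.

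Next, I would apply the Pressure Lemma (Lemma \ref{Lem:Pressure}) with weights $p_C = \eta(C)$ and values $a_C = \log \int_C \exp(g_n(\theta,x,y)) \, dP_0(\theta,x)$ indexed by cells $C$ of $\alpha \times \beta_n$. Since $\sum_C \exp(a_C) = \int \exp(g_n)\, dP_0$, the lemma gives
\begin{equation*}
\sum_C \eta(C) \log \int_C \exp(g_n) \, dP_0 + H(\eta, \alpha \times \beta_n) \leq \log \int \exp(g_n) \, dP_0.
\end{equation*}

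Finally, I would convert this into the desired bound using the oscillation estimate. Writing $\epsilon_n(y) = \sum_{k=0}^{n-1}\rho_\delta(T^ky)$, the oscillation bound yields, cell by cell,
\begin{equation*}
\sup_C g_n(\cdot,\cdot,y) \leq \log \int_C \exp(g_n)\, dP_0 - \log P_0(C) + \epsilon_n(y),
\end{equation*}
so that $\int g_n\, d\eta \leq \sum_C \eta(C)\sup_C g_n \leq \epsilon_n(y) + \sum_C \eta(C)\log\int_C \exp(g_n)\, dP_0 - \sum_C \eta(C) \log P_0(C)$. Substituting the Pressure Lemma inequality and recognizing $-H(\eta,\alpha\times\beta_n) - \sum_C \eta(C)\log P_0(C) = KL(\eta : P_0 \mid \alpha \times \beta_n)$ collapses the right-hand side to $\log\int \exp(g_n)\, dP_0 + KL(\eta : P_0 \mid \alpha \times \beta_n) + \epsilon_n(y)$, which is exactly the stated inequality. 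No almost-sure statement or ergodic theorem is needed at this stage; the bound is pointwise in $y$ and $n$.
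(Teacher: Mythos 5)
Your proof is correct and follows essentially the same route as the paper: an oscillation bound over the cells of $\alpha \times \beta_n$ derived from condition (iii), combined with the Pressure Lemma (Jensen), with only cosmetic differences in bookkeeping (you compare cell suprema and infima where the paper integrates against the conditional measures $\eta_C$ and $P_{0,C}$). The only omission is the degenerate case $\eta \nprec_{\alpha \times \beta_n} P_0$, where some cell has $\eta(C)>0$ but $P_0(C)=0$; there your use of $\log P_0(C)$ breaks down, but the claimed inequality is then trivial since $KL(\eta : P_0 \mid \alpha \times \beta_n) = +\infty$ while $\int g_n \, d\eta$ is finite, so this is a one-line fix rather than a real gap.
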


\begin{proof}
If $\eta \nprec_{\alpha \times \beta_{n}} P_0$, then the inequality holds trivially. 
Now suppose $\eta \prec_{\alpha \times \beta_{n}} P_0$, and let $\xi = \{ C \in \alpha \times \beta_{n} : \eta(C) >0 \}$.
For $C \in \xi$ and $(\theta,x),(\theta',x') \in C$, property (iii) of the loss function, and our hypotheses on $\alpha$ and $\beta$ yield that
\begin{equation*}
g_n(\theta',x',y) \leq g_n(\theta,x,y) + \sum_{k=0}^{n-1} \rho_{\delta}\bigl(T^k y \bigr).
\end{equation*}
Integrating out $(\theta',x')$ with respect to the conditional distribution $\eta_C$ gives
\begin{equation*}
\int_C g_n(\theta',x',y) \, d\eta_C(\theta',x') \leq  g_n(\theta,x,y) +  \sum_{k=0}^{n-1} \rho_{\delta}\bigl(T^k y \bigr).
\end{equation*}
After exponentiation and integration with respect to the $P_{0,C}$, we get
\begin{align*} 
\exp \biggl(  \int_C g_n(\theta',x',y) \, & d\eta_C(\theta',x') \biggr) \\
& \leq \exp\Biggl( \sum_{k=0}^{n-1} \rho_{\delta}\bigl(T^k y \bigr)\Biggr) \int_C \exp \bigl( g_n(\theta,x,y) \bigr) \, dP_{0,C}(\theta,x).
\end{align*}
Invoking Lemma \ref{Lem:Pressure} and the inequality above, we find that 
\begin{align*}
 \int g_n(\theta',x',y) \, & d\eta(\theta',x') - KL( \eta : P_0 \mid \alpha \times \beta_{n})  \\
 & =  \sum_{C \in \xi} \eta(C) \biggl[  \log P_0(C) + \int_C g_n(\theta',x',y) \, d\eta_C(\theta',x') - \log \eta(C) \biggr] \\
& \leq \log \sum_{C \in \xi} \exp\biggl( \log P_0(C) +    \int_C g_n(\theta',x',y) \, d\eta_C(\theta',x') \biggr) \\
 & = \log \sum_{C \in \xi} \exp\biggl( \int_C g_n(\theta',x',y) \, d\eta_C(\theta',x') \biggr) P_0(C) \\
 & \leq \log \sum_{C \in \xi} \biggl(\int_C \exp\bigl(  g_n(\theta,x,y)  \bigr) \, dP_{0,C}(\theta,x) \biggr) P_0(C) + \sum_{k=0}^{n-1} \rho_{\delta}\bigl(T^k y \bigr) \\
 & = \log \int \exp\bigl(  g_n(\theta,x,y)  \bigr) \, dP_0(\theta,x) + \sum_{k=0}^{n-1}  \rho_{\delta}\bigl(T^k y \bigr),
 \end{align*}
as was to be shown.
\end{proof}

\begin{PfofPropLB}

Fix an ergodic joining $\lambda \in \cJ(R : \nu)$ and $\epsilon > 0$.
Let $\delta > 0$ be sufficiently small that the bound of Lemma \ref{Lemma:Sunshine} holds
and that $\int \rho_{\delta} \, d\nu < \epsilon$ (using property (iii) of the loss).
Fix a finite measurable partition $\alpha$ of $\Theta$ such that $\diam(\alpha) < \delta$, and select $m$ large enough so that the 
partition $\beta$ of $\cX$ generated by central words of length $m$ satisfies $\diam(\beta) < \delta$.
Then for $\nu$-almost every $y$,
\begin{align*}
H(\lambda_y, \alpha \times \beta_{n}) &  + L( \lambda_y : P_0 \mid \alpha \times \beta_{n}) + \int g_n \, d\lambda_y \\
& =  \int g_n  \, d\lambda_y - KL(\lambda_y : P_0 \mid \alpha \times \beta_{n})  \\
&\leq \log \int \exp( g_n(\theta,x,y) ) \, dP_0(\theta,x) + \sum_{k=0}^{n-1} \rho_{\delta}\bigl(T^k y \bigr) ,
\end{align*}
where the inequality follows from Lemma \ref{Lemma:LBpartition}. 
Dividing each side of the inequality above by $n$, and then letting $n$ tend to infinity,  
Lemma \ref{Lemma:IntegralDisintegration}, Lemma \ref{Lemma:Sunshine}, and the ergodic theorem together imply that
for $\nu$-almost every $y \in \mathcal{Y}$,
\begin{align*}
h^{\nu}( \lambda, \alpha \times \beta ) + 
\int \bigl(f_{\theta}(x) - \mathcal{P}(f_{\theta}) \bigr) \, d\lambda + \int g \, d\lambda  \leq \liminf_{n} \frac{1}{n} Z_n(y) + 2 \epsilon.
\end{align*}
Taking the supremum over all partitions $\alpha$ of $\Theta$ with diameter less than $\delta$ and 
all partitions $\beta$ of $\cX$ generated by central words of length at least $m$, we obtain the inequality
\begin{align*}
\int g \, d\lambda - G(\lambda)  \leq \liminf_{n} \frac{1}{n} \log Z_n(y) + 2 \epsilon.
\end{align*}
Since $\epsilon>0$ was arbitrary,
\begin{equation*}
 \int g \, d\lambda - G(\lambda)  \leq \liminf_{n} \frac{1}{n} \log Z_n.
\end{equation*}
As this inequality holds for all ergodic $\lambda \in \mathcal{J}(R : \nu)$ and the left-hand side is harmonic in $\lambda$, we have
\begin{equation*}
\sup_{\lambda \in \mathcal{J}(R : \nu)} \biggl\{ \int g \, d\lambda - G(\lambda) \biggr\}  \leq \liminf_{n} \frac{1}{n} \log Z_n,
\end{equation*}
which completes the proof.
\end{PfofPropLB}

\vskip.1in

\subsection{Upper bound} \label{Sect:UB}

In Proposition \ref{Prop:UB} below we establish an almost sure upper bound on the limiting behavior of 
$n^{-1} \log Z_n(y)$.  Together with the lower bound in Proposition \ref{Prop:LB}, this completes the 
proof of Theorem \ref{Thm:Pressure}.

\begin{proposition} \label{Prop:UB}
For $\nu$-almost every $y \in \mathcal{Y}$,
\begin{equation*}
\limsup_n \frac{1}{n} \log Z_n(y) \leq \sup_{\lambda \in \mathcal{J}(R : \nu)} \biggl\{ \int g \, d\lambda - G(\lambda) \biggr\}.
\end{equation*}
\end{proposition}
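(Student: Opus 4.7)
The plan is to mirror the lower bound proof of Proposition \ref{Prop:LB}, but to apply the Pressure Lemma as an equality so as to obtain an upper bound on $\log Z_n$, and then to construct empirical joinings from the Gibbs-type optimizers in order to pass to the limit. Fix $\epsilon > 0$; choose $\delta > 0$ small enough that $\int \rho_\delta \, d\nu < \epsilon$ and the error in Lemma \ref{Lemma:Sunshine} is at most $\epsilon + (\log K)/n$. Fix a finite measurable partition $\alpha$ of $\Theta$ with $\diam(\alpha) < \delta$, and take $\beta$ to be a partition of $\mathcal{X}$ generated by central words of sufficient length so that $\diam(\beta) < \delta$. Write $R = \Id \times S$ and $\xi_n = \bigvee_{k=0}^{n-1} R^{-k}(\alpha \times \beta)$. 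For each cell $C \in \xi_n$ and any representative $(\theta_C, x_C) \in C$, property (iii) of the loss yields $\sup_C e^{g_n(\cdot, y)} \leq e^{g_n(\theta_C, x_C, y) + \rho_n(y)}$, where $\rho_n(y) = \sum_{k=0}^{n-1} \rho_\delta(T^k y)$. Summing over cells and applying the Pressure Lemma (Lemma \ref{Lem:Pressure}) in its equality form gives
\begin{equation*}
\log Z_n(y) \ \leq \ \rho_n(y) + \int g_n \, d\eta_n^{*} - KL(\eta_n^{*} : P_0 \mid \xi_n),
\end{equation*}
where $\eta_n^{*}$ denotes the Gibbs-type optimizer on the partition $\xi_n$.

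Next, to extract a joining, associate to $\eta_n^{*}$ the lifted empirical measure
\begin{equation*}
\Lambda_n^y \ = \ \frac{1}{n} \sum_{k=0}^{n-1} (R \times T)^k_{*} \bigl( \eta_n^{*} \otimes \delta_y \bigr)
\end{equation*}
on $\Theta \times \mathcal{X} \times \mathcal{Y}$. The $\mathcal{Y}$-marginal of $\Lambda_n^y$ is $\frac{1}{n} \sum_k \delta_{T^k y}$, which converges to $\nu$ for $\nu$-a.e.\ $y$ by the pointwise ergodic theorem; since $\Lambda_n^y$ is approximately $R \times T$-invariant (up to $O(1/n)$), any subsequential weak$^{*}$ limit $\Lambda$ belongs to $\mathcal{J}(R : \nu)$. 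By construction $\int g \, d\Lambda_n^y = n^{-1} \int g_n \, d\eta_n^{*}$, so weak$^{*}$ convergence, continuity of $g$, and condition (ii) give $\int g \, d\Lambda_n^y \to \int g \, d\Lambda$ along the subsequence. Decomposing $-KL = H + L$ and invoking Lemma \ref{Lemma:Sunshine} yields
\begin{equation*}
\tfrac{1}{n} L(\eta_n^{*} : P_0 \mid \xi_n) \ \leq \ \int \bigl( f_{\theta}(x) - \mathcal{P}(f_{\theta}) \bigr) \, d\Lambda_n^y + \epsilon + \frac{\log K}{n},
\end{equation*}
which converges to $\int (f_\theta - \mathcal{P}(f_\theta)) \, d\Lambda + \epsilon$. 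Combining these limits with the entropy bound discussed below, and using $h^\nu(\Lambda, \alpha \times \beta) \leq h^\nu(\Lambda)$, one obtains
\begin{equation*}
\limsup_n \tfrac{1}{n} \log Z_n(y) \ \leq \ \int g \, d\Lambda - G(\Lambda) + 2\epsilon \ \leq \ \sup_{\lambda \in \mathcal{J}(R : \nu)} \Bigl\{ \int g \, d\lambda - G(\lambda) \Bigr\} + 2\epsilon,
\end{equation*}
and sending $\epsilon \to 0$ finishes the proof.

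The principal obstacle is the entropy bound $\limsup_n n^{-1} H(\eta_n^{*}, \xi_n) \leq h^\nu(\Lambda, \alpha \times \beta)$ required to close the chain of inequalities above. Since $\eta_n^{*}$ need not be $R$-invariant and the partition $\xi_n$ refines with $n$, the standard convergence $n^{-1} H(\mu, \xi_n) \to h(\mu, \xi)$ available for invariant $\mu$ does not apply directly. The plan is to relate $n^{-1} H(\eta_n^{*}, \xi_n)$ to the partition-based fiber entropy of $\Lambda_n^y$ at depth $n$, then exploit subadditivity of entropy in the partition together with concavity of entropy in the measure to transfer control to the time-averaged measure, and finally apply the upper semi-continuity of fiber entropy from Lemma \ref{Lemma:FiberEntropyIsUSC} along the convergent subsequence. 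A Misiurewicz-style argument aligning partition depth with the weak$^{*}$ limit of $\Lambda_n^y$ should then yield the required inequality.
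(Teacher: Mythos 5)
Your proposal follows the same architecture as the paper's proof: a Pressure-Lemma upper bound on $\log Z_n$ that decomposes into an entropy term, an information ($L$) term handled by Lemma \ref{Lemma:Sunshine}, and an average-loss term with a $\sum_k \rho_{\delta}(T^ky)$ error; lifted empirical measures whose subsequential weak$^*$ limits lie in $\mathcal{J}(R:\nu)$; and a Misiurewicz-style argument for the entropy. The only structural difference is that you run the Pressure Lemma in equality form on a coarse-grained optimizer $\eta_n^*$, whereas the paper applies it directly to the actual Gibbs posterior $P_n^y$ (Lemma \ref{Lemma:Ghostbusters}); this difference is cosmetic, and your empirical measure $\Lambda_n^y$ plays the role of the paper's $\eta_n^y$.

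The genuine gap is the entropy estimate, which you yourself flag as the principal obstacle and leave as a plan, and the tool you name would not close it as stated. Lemma \ref{Lemma:FiberEntropyIsUSC} asserts upper semicontinuity of $\lambda \mapsto h^{\nu}(\lambda)$ on the space of joinings $\mathcal{J}(R:\nu)$; your measures $\Lambda_n^y$ (and $\eta_n^*$) are neither $R\times T$-invariant nor do they have $\mathcal{Y}$-marginal $\nu$, so that lemma applies only to the limit point $\Lambda$ and gives no control of $\limsup_n n^{-1}H(\eta_n^*,\xi_n)$ along the approximating sequence. What is actually needed, and what the paper does, is a fixed-depth comparison: for each fixed $q$, a subadditivity argument (adapted from \cite[p.~190]{Walters1982}) gives $\tfrac{1}{n}H(P_n^y,\alpha_n) \le \tfrac{1}{q}H(\eta_n^y,\alpha_q \mid \mathcal{Y}) + o(1)$; one then passes to the limit $n\to\infty$ in the \emph{fixed-depth} conditional entropy, which requires choosing the partition so that $\proj_{\Theta\times\mathcal{X}}(\lambda)(\partial\alpha)=0$ (possible by \cite[Lemma 8.5]{Walters1982}) and invoking \cite[Lemma 2.1]{Kifer2001}; only afterwards does one let $q\to\infty$ to reach $h^{\nu}(\lambda,\alpha)\le h^{\nu}(\lambda)$. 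Your sketch omits both the null-boundary choice and this order of limits, and that is precisely where the proof lives; until it is carried out the argument is incomplete. A minor additional point: tightness of $\{\Lambda_n^y\}$ and membership of its limit points in $\mathcal{J}(R:\nu)$ for $\nu$-a.e.\ $y$ deserve justification on the non-compact space $\mathcal{Y}$ (the paper again cites \cite[Lemma 2.1]{Kifer2001}), though this part of your outline can be patched by a countable convergence-determining class and the ergodic theorem.
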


We begin with a preliminary lemma.  Recall that $P_0$ is the prior distribution on $\Theta \times \cX$ generated
by the prior $\pi_0$ (defined in (\ref{Eqn:BigPrior})) and the family $\{ \mu_\theta : \theta \in \Theta \}$, while
$P_n( \cdot \mid y)$ is the Gibbs posterior distribution associated with $y, Ty, \ldots, T^{n-1}y$ (defined in (\ref{Eqn:GibbsDist})).
To simplify notation, in what follows $P_n( \cdot \mid y)$ is denoted by $P_n^y$.

\begin{lemma} \label{Lemma:Ghostbusters}
If $\alpha$ is a finite measurable partition of $\Theta \times \cX$ with diameter less than $\delta$ then 
for $y \in \cY$ and $n \geq 1$,
\begin{align*}
\log \int & \exp( g_n(\theta,x,y) ) \, dP_0 \\
& \leq H(P_n^y, \alpha_n) + L(P_n^y : P_0 \mid \alpha_n) 
   + \int g_n(\theta,x,y) \, dP_n^y +  \sum_{k=0}^{n-1} \rho_{\delta}\bigl(T^k y \bigr).
\end{align*}
\end{lemma}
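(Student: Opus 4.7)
The plan is to recognize that the Gibbs posterior $P_n^y$ is precisely the equality case in the Pressure Lemma applied to $g_n$ and $P_0$: by the definition (\ref{Eqn:GibbsDist}), $dP_n^y/dP_0 = \exp(g_n)/Z_n(y)$, so for every cell $C$ of $\alpha_n$ with $P_n^y(C) > 0$ we have the basic identity
\begin{equation*}
\log Z_n(y) = \log \int_C \exp\bigl(g_n(\theta,x,y)\bigr)\, dP_0(\theta,x) - \log P_n^y(C).
\end{equation*}
The remaining task is to estimate the first term on the right using the modulus of continuity of $g_n$ within $C$.

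First I would verify that the hypothesis $\diam(\alpha) < \delta$ passes to a useful modulus of continuity for $g_n$ on cells of the refined partition $\alpha_n = \bigvee_{k=0}^{n-1} R^{-k}\alpha$. The key observation is that two points $(\theta,x),(\theta',x')$ lying in a common cell of $\alpha_n$ have iterates $R^k(\theta,x), R^k(\theta',x')$ lying in a common cell of $\alpha$ for each $0 \leq k \leq n-1$, so $d(R^k(\theta,x),R^k(\theta',x')) < \delta$. Combined with property (iii) of the loss applied at each time $k$, this yields
\begin{equation*}
\bigl|g_n(\theta,x,y) - g_n(\theta',x',y)\bigr| \leq \sum_{k=0}^{n-1} \rho_{\delta}(T^k y),
\end{equation*}
uniformly in $(\theta,x),(\theta',x') \in C \in \alpha_n$. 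This mimics the argument used in Lemma \ref{Lemma:LBpartition} but in the opposite direction.

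Next I would exploit this modulus cellwise. Fixing $C \in \alpha_n$ with $P_n^y(C) > 0$ and any $(\theta,x) \in C$, the uniform estimate gives
\begin{equation*}
\int_C \exp\bigl(g_n(\theta',x',y)\bigr)\, dP_0(\theta',x') \leq P_0(C) \exp\biggl(g_n(\theta,x,y) + \sum_{k=0}^{n-1}\rho_{\delta}(T^ky)\biggr).
\end{equation*}
Taking logarithms and then integrating $(\theta,x)$ against the conditional measure $P_n^y(\cdot \mid C)$ (which replaces $g_n(\theta,x,y)$ with its cell average) produces
\begin{equation*}
\log \int_C \exp(g_n)\, dP_0 \leq \log P_0(C) + \int_C g_n\, dP_n^y(\cdot \mid C) + \sum_{k=0}^{n-1} \rho_{\delta}(T^k y).
\end{equation*}
Substituting this into the identity for $\log Z_n(y)$ and then averaging the resulting inequality against $P_n^y$ over the cells $C$ of $\alpha_n$ yields on the right-hand side exactly the three terms $H(P_n^y,\alpha_n) = -\sum_C P_n^y(C)\log P_n^y(C)$, $L(P_n^y : P_0 \mid \alpha_n) = \sum_C P_n^y(C)\log P_0(C)$, and $\int g_n\, dP_n^y = \sum_C P_n^y(C)\int_C g_n\, dP_n^y(\cdot \mid C)$, plus the error $\sum_k \rho_{\delta}(T^ky)$, giving the claimed inequality.

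The main thing to be careful about is the interaction between the spatial scale $\delta$ (which controls the single-step loss modulus) and the dynamical refinement $\alpha_n$ (which controls how $g_n$ varies within cells). Once one sees that the cells of $\alpha_n$ are forward-$R$-dynamically thin at scale $\delta$ in each coordinate $R^k$, the rest is a direct, term-by-term calculation using the defining formula for $P_n^y$; no Jensen-type inequality beyond the trivial averaging of a scalar identity is needed, since equality in the Pressure Lemma is built into the definition of the Gibbs posterior.
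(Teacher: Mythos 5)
Your proposal is correct and follows essentially the same route as the paper's proof: the cell-wise modulus-of-continuity bound from property (iii) applied along the orbit (the paper's inequality (\ref{Eqn:Spooky})), combined with the exact relation $P_n^y(C) = Z_n(y)^{-1}\int_C \exp(g_n)\,dP_0$, i.e.\ the equality case of the Pressure Lemma, then averaged over cells with weights $P_n^y(C)$. The only difference is organizational (you use the per-cell identity for $\log Z_n(y)$ and average, while the paper writes the global log-sum-exp decomposition first), and you spell out more explicitly why cells of $\alpha_n$ are $\delta$-thin under each $R^k$, which the paper leaves implicit.
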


\begin{proof}
Let $\alpha$ be a finite measurable partition of $\Theta \times \cX$ with $\diam(\alpha)<\delta$, and let $y \in \cY$.
By definition $P_n^y$ and $P_0$ are equivalent measures, 
and hence $P_n^y \prec_{\alpha_n} P_0$ and $P_0 \prec_{\alpha_n} P_n^y$.
Let $\xi = \{ C \in \alpha_n : P_0(C) >0\} = \{ C \in \alpha_n : P_n^y(C) >0\}$.

Fix $C \in \xi$ for the moment.  For points $(\theta,x),(\theta',x') \in C$ the hypothesis on $\alpha$ ensures that
\begin{equation*}
g_n(\theta,x,y)  \leq  g_n(\theta',x',y) +  \sum_{k=0}^{n-1} \rho_{\delta}\bigl(T^k y \bigr)
\end{equation*}
where $\rho_\delta ()$ is defined in condition (iii) of the loss.
Exponentiating both sides of the inequality and integrating $(\theta,x)$ with respect to the prior
$P_{0,C}$ conditioned on being in $C$ yields
\begin{equation*}
\int_C  \exp(g_n(\theta,x,y)) \, dP_{0,C} \leq  \exp( g_n(\theta',x',y) ) \exp\biggl( \sum_{k=0}^{n-1} \rho_{\delta}\bigl(T^k y \bigr)\biggr).
\end{equation*}
Taking logarithms and integrating $(\theta',x')$ with respect to the posterior $P_{n,C}^y$ conditioned on being in $C$ yields
\begin{equation} \label{Eqn:Spooky}
\log \int_C  \exp(g_n(\theta,x,y)) \, dP_{0,C} \leq \int_C g_n(\theta',x',y) \, dP_{n,C}^y + \sum_{k=0}^{n-1} \rho_{\delta}\bigl(T^k y \bigr).
\end{equation}
By the definition of $P_n$ and Lemma \ref{Lem:Pressure}
we have
\begin{align*}
\log & \int \exp( g_n(\theta,x,y) ) \, dP_0 \\
& = \log \sum_{C \in \xi} \exp\biggl[ \log \int_C \exp ( g_n(\theta,x,y) ) \, dP_0 \biggr] \\
& = \sum_{C \in \xi} P_n^y(C) \biggl[ - \log P_n^y(C) + \log P_0(C) + \log \int_C \exp ( g_n(\theta,x,y) ) \, dP_{0,C} \biggr] \\
& = H(P_n^y, \alpha_n) + L(P_n^y : P_0 \mid \alpha_n)  +  
\sum_{C \in \xi} P_n^y(C) \log \int_C \exp ( g_n(\theta,x,y) ) \, dP_{0,C}.
\end{align*}
Applying inequality (\ref{Eqn:Spooky}) to the terms of the final sum above, we see that
\begin{align*}
\log & \int \exp( g_n(\theta,x,y) ) \, dP_0(\theta,x) \\
& \leq H(P_n^y, \alpha_n) + L(P_n^y : P_0 \mid \alpha_n) + \int g_n(\theta,x,y) \, dP_n^y(\theta,x) +  \sum_{k=0}^{n-1} \rho_{\delta}\bigl(T^k y \bigr)
\end{align*}
as desired.
\end{proof}

\vskip.1in

\begin{PfofPropUB}

To begin the proof, define
\begin{equation*}
\eta_n^y = \frac{1}{n} \sum_{k=0}^{n-1} (P_n^y \circ (\Id \times S)^{-k}) \otimes \delta_{y_k}.
\end{equation*}
By \cite[Lemma 2.1]{Kifer2001}, 
for $\nu$-almost every $y$, the sequence $\{\eta_n^y\}_n$ is tight and all of its limit points are contained in $\cJ(R : \nu)$. For a given $y$ in this set of full measure, let $\lambda$ be such a limit point, with $\eta_{n_k}^y \to \lambda$. 

Let $\epsilon >0$, and choose $\delta >0$ such that $\int \rho_{\delta} \, d\nu < \epsilon$. Choose a finite measurable partition $\alpha$ of $\Theta \times \cX$ such that $\diam(\alpha) < \delta$ and $\proj_{\Theta \times \cX}(\lambda)( \partial \alpha) = 0$ (which exists since $\Theta \times \cX$ is compact \cite[Lemma 8.5]{Walters1982}). By adapting an argument from \cite[p. 190]{Walters1982} involving subadditivity of measure-theoretic entropy, 
we obtain that for each $q \geq 1$, for $n \geq q$,
\begin{align*}
\frac{1}{n} H(P_n^y, \alpha_n) & \leq \frac{1}{n} \sum_{k=0}^{n-1} \frac{1}{q} H( P_n^y \circ R^{-k}, \alpha_q) + o(1) \\
 & = \frac{1}{q}H(\eta_n^y,\alpha_q \mid \cY) + o(1),
\end{align*}
where $o(1)$ refers to a term that tends to $0$ as $n$ tends to infinity (for fixed $q$). 
Then by letting $n$ tend to infinity and applying \cite[Lemma 2.1]{Kifer2001} again, we see that
\begin{equation*}
\limsup_n \frac{1}{n} H(P_n^y, \alpha_n) \leq \frac{1}{q} H(\lambda, \alpha_q \mid \cY),
\end{equation*}
where the conditional entropy $H( \cdot \mid \cY)$ is defined in (\ref{Eqn:CondEnt}).
To proceed with the proof, we require the following lemma. 
Recall that at the beginning of this section, we set $g = - \ell$ and $g_n = - \ell_n$. 


\begin{lemma} \label{Lemma:Groundhog}
Let $\{Q_n\}_n$ be any sequence of measures on $\Theta \times \cX$.  For each $n \geq 1$ 
and $y \in \cY$ define
\begin{equation*}
\eta_n = \frac{1}{n} \sum_{k=0}^{n-1} (Q_n \circ R^{-k}) \otimes \delta_{y_k}.
\end{equation*}
If the subsequence $\{\eta_{n_k}\}_k$ converges to $\lambda$, then
\begin{equation*}
\lim_k \frac{1}{n_k} \int g_{n_k}(\theta,x,y) \, dQ_{n_k}(\theta,x) = \int g \, d\lambda.
\end{equation*}
\end{lemma}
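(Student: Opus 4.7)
The plan is first to reduce the claim to a weak-convergence statement, then to handle the unboundedness of $g$ through a continuous truncation controlled by the Birkhoff ergodic theorem applied to $\ell^*$. First I would note the identity $\int g \, d\eta_n = \frac{1}{n} \int g_n(\theta,x,y) \, dQ_n(\theta,x)$, which follows by unpacking the definition of $\eta_n$, using the change of variables for $Q_n \circ R^{-k}$, and telescoping against $g_n(\theta,x,y) = \sum_{k=0}^{n-1} g(\theta, S^k x, T^k y)$. Thus it suffices to prove $\int g \, d\eta_{n_k} \to \int g \, d\lambda$.

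If $g$ were bounded and continuous, this would be immediate from the portmanteau theorem. Since $g$ is merely continuous and dominated by $\ell^*(y) \in L^1(\nu)$, for each $M > 0$ I would introduce the bounded continuous truncation $g^M = \max(-M, \min(M, g))$, which satisfies the key domination $|g - g^M| \leq \ell^*(y) \, \mathbf{1}_{\{\ell^*(y) > M\}}$. For fixed $M$, the portmanteau theorem yields $\int g^M \, d\eta_{n_k} \to \int g^M \, d\lambda$ as $k \to \infty$.

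To control the truncation error, observe that the $\cY$-marginal of $\eta_n$ is $\nu_n = n^{-1} \sum_{k=0}^{n-1} \delta_{T^k y}$, so $|\int (g - g^M) \, d\eta_n| \leq \int \ell^* \, \mathbf{1}_{\{\ell^* > M\}} \, d\nu_n$. For $\nu$-a.e. $y$, Birkhoff's ergodic theorem applied to $\ell^* \, \mathbf{1}_{\{\ell^* > M\}} \in L^1(\nu)$ gives that this converges as $n \to \infty$ to $\int \ell^* \, \mathbf{1}_{\{\ell^* > M\}} \, d\nu$, which tends to $0$ as $M \to \infty$ by dominated convergence. Applying Birkhoff to $\ell^*$ itself also shows $\nu_n \to \nu$ weakly for such $y$, so the $\cY$-marginal of $\lambda$ equals $\nu$ (consistent with $\lambda \in \cJ(R:\nu)$), and therefore $|\int (g - g^M) \, d\lambda| \leq \int \ell^* \, \mathbf{1}_{\{\ell^* > M\}} \, d\nu \to 0$ as well.

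A three-term triangle inequality then finishes the proof: given $\varepsilon > 0$, choose $M$ large enough that both of the truncation-error bounds above are below $\varepsilon/3$, then choose $k$ large enough so that $|\int g^M \, d\eta_{n_k} - \int g^M \, d\lambda| < \varepsilon/3$. The main obstacle is precisely the unboundedness of $g$ in the $\cY$-variable, which blocks any direct portmanteau argument; the remedy is the continuous truncation combined with the uniform-integrability input supplied by Birkhoff's theorem applied to $\ell^*$ along the orbit of the specific observation point $y$.
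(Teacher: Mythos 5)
Your proposal is correct, and its opening reduction is exactly the paper's proof: unpack $\eta_n$ to get $\frac{1}{n}\int g_n\,dQ_n = \int g\,d\eta_n$, then pass to the limit along $\{\eta_{n_k}\}$. Where you differ is that the paper then concludes in one line ``since $\eta_{n_k}\to\lambda$ and $g$ is continuous,'' which is only immediate when $g$ is bounded; since $\mathcal{Y}$ is merely Polish and $g=-\ell$ is only dominated by $\ell^*(y)\in L^1(\nu)$, weak convergence alone does not give convergence of the integrals. Your truncation $g^M=\max(-M,\min(M,g))$, the bound $|g-g^M|\leq \ell^*\,\mathbf{1}_{\{\ell^*>M\}}$, and the control of the truncation error along the empirical $\mathcal{Y}$-marginal $\nu_n$ via Birkhoff's theorem supply precisely the uniform-integrability input that the paper's argument leaves implicit, so your version is more complete (at the harmless cost that it works only for $\nu$-almost every $y$, which is all that is used when the lemma is invoked in the proof of Proposition \ref{Prop:UB}, where the limit point $\lambda$ indeed lies in $\mathcal{J}(R:\nu)$ so its $\mathcal{Y}$-marginal is $\nu$). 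Two small points to tighten: take $M$ along a countable sequence (e.g.\ integers) so that the Birkhoff full-measure sets can be intersected into a single good set of $y$, and note that identifying the $\mathcal{Y}$-marginal of $\lambda$ as $\nu$ is cleaner quoted from the cited Kifer lemma than re-derived from weak convergence of $\nu_n$.
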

\begin{proof}
By definition of $\eta_n$,
\begin{align*}
\frac{1}{n} \int g_n(\theta,x,y) \, dQ_n(x) & = \frac{1}{n} \int \sum_{k=0}^{n-1} g(\theta,S^k x, y_k) \, dQ_n(x) \\
& = \int g \, d\eta_n.
\end{align*}
Then the desired limit follows from the fact that $\{\eta_{n_k}\}_k$ converges to $\lambda$ and 
$g$ is continuous.
\end{proof}

Combining Lemma \ref{Lemma:Groundhog} with Lemmas \ref{Lemma:Sunshine} and 
\ref{Lemma:Ghostbusters}, we find that for $\nu$-almost every $y \in \mathcal{Y}$
\begin{align*}
\limsup_n \frac{1}{n} & \log \int \exp( g_n(\theta,x,y) ) \, dP_0(\theta,x) \\
&  \leq  \frac{1}{q} H(\lambda,\alpha_q \mid \cY) + \int \bigl(f_{\theta}(x) -  \mathcal{P}(f_{\theta}) \bigr) \, d\lambda(\theta,x,y)  + \int g \, d\lambda + 2\epsilon.
\end{align*}
Letting $q$ tend to infinity, we get
\begin{align*}
\limsup_n \frac{1}{n} & \log \int \exp( g_n(\theta,x,y) ) \, dP_0(\theta,x) \\
& \leq  h^{\nu}(\lambda,\alpha) + \int \bigl(f_{\theta}(x) -  \mathcal{P}(f_{\theta})\bigr) \, d\lambda(\theta,x,y)  + \int g \, d\lambda + 2\epsilon \\
& \leq h^{\nu}(\lambda) + \int \bigl(f_{\theta}(x) -  \mathcal{P}(f_{\theta}) \bigr) \, d\lambda(\theta,x,y)  + \int g \, d\lambda + 2\epsilon.
\end{align*}
Since $\epsilon$ was arbitrary, we obtain
\begin{align*}
\limsup_n \frac{1}{n} & \log \int \exp( g_n(\theta,x,y) ) \, dP_0(\theta,x) \\
 & \leq  \int g \, d\lambda - G(\lambda) \\
 & \leq \sup \biggl\{  \int g \, d\lambda - G(\lambda)  : \lambda \in \mathcal{J}(R : \nu) \biggr\}.
\end{align*}
This concludes the proof of Proposition \ref{Prop:UB}.
\end{PfofPropUB}



%
%

\section{Convergence of Gibbs posterior distributions}

The purpose of this section is to establish Theorem \ref{Thm:PosteriorConvergence} concerning convergence of the Gibbs posterior distributions to the solution set of a variational problem. From the dynamics point of view, this convergence highlights the role of the variational problem and the associated equilibirum joinings. We believe these objects to be worthy of further study. From the statistical point of view, this result describes the concentration of posterior distributions, which is of interest in any frequentist analysis of Bayesian methods. The proof follows somewhat directly from Theorem \ref{Thm:Pressure}. 

  
  \vspace{2mm}

\begin{PfofPosteriorConvergence}

Let $U$ be an open neighborhood of $\Theta_{\min}$. Let $F = \Theta \setminus U$, which is closed and therefore compact. If $\pi_0(F) = 0$, then $\pi_n(F \mid y) =0$ for all $n$. Now suppose $\pi_0(F) >0$, and let $\tilde{\pi}_0 = \pi_0( \cdot \mid F)$ be the conditional prior on $F$. Let $V_*$ be the common value of $V(\theta)$ for 
$\theta \in \Theta_{\min}$.  As $V : \Theta \to \R$ is lower semi-continuous and $F$ is compact and disjoint from $\Theta_{\min}$, there exists $\epsilon >0$ such that $\inf_{\theta \in F} V(\theta) \geq V_*+\epsilon$. Now we apply Theorem \ref{Thm:Pressure} in two ways: first, with the full parameter set $\Theta$ and prior $\pi_0$, and second, with $F$ in place of $\Theta$ and the conditional prior $\tilde{\pi}_0$ in place of $\pi_0$. 
Let $Z_n^F$ denote the normalizing constant in the second case.  Then for $\nu$-almost every $y \in \mathcal{Y}$, there exists $N_1 = N_1(y)$ and $N_2 = N_2(y)$ such that for all $n \geq N_1$, 
\begin{equation*}
- \frac{1}{n} \log Z_n^F(y) \geq V_* + 2\epsilon / 3,
\end{equation*}
and for all $n \geq N_2$,
\begin{equation*}
- \frac{1}{n} \log Z_n(y) \leq V_* + \epsilon/3.
\end{equation*}
Then for all $n \geq \max(N_1,N_2)$, we have
\begin{align*}
\pi_n(F \mid y) & = P_n( F \times \mathcal{X} \mid y) \\
& = \frac{1}{Z_n(y)} \int_{F \times \mathcal{X}} \exp \bigl( - \ell_n(\theta,x,y) \bigr) \, dP_0(\theta,x) \\
& = \frac{\pi_0(F) Z_n^F(y)}{Z_n(y)} \\
& \leq \exp\bigl( -V_*n - (2\epsilon/3)n + V_* n +( \epsilon/3)n \bigr) \\
& \leq \exp\bigl( - (\epsilon/3) n \bigr).
\end{align*}
Thus, for $\nu$-almost every $y \in \mathcal{Y}$, we see that $\pi_n(F \mid y)$ tends to $0$.
\end{PfofPosteriorConvergence}

\section{Posterior consistency for Gibbs processes}

Here we consider the problem of inference from direct observations of a Gibbs process, as described in Section \ref{Sect:DirectGibbs}. Recall that Gibbs processes allow one to model substantial degrees of dependence, with Markov chains of arbitrarily large order as a special case. In the present setting, we are able to establish posterior consistency (Theorem \ref{Thm:DirectGibbs}). The first step of the proof involves the application of our main results to show that the posterior distributions concentrate around $\Theta_{\min}$. Interestingly, the second main step of the proof (showing that $\Theta_{\min} = [\theta^*]$) relies on a celebrated result of Bowen about uniqueness of equilibrium states in dynamics. 

\vspace{2mm}

\begin{PfofDirectGibbs}

To begin, let us first establish the connection between the setting of Section \ref{Sect:DirectGibbs} and the general framework for Gibbs posterior inference in Section \ref{Sect:Intro}. Let $\mathcal{Z}$, $\Theta$, $\{f_{\theta} : \theta \in \Theta\}$, $\{ \mu_{\theta} : \theta \in \Theta\}$ and $\Pi_0$ be as in Section \ref{Sect:DirectGibbs}. 
In this particular application, we take $\mathcal{X}$ to be the trivial mixing SFT, which consists of exactly one point. 
Intuitively, $\mathcal{X}$ is unnecessary in this application because we make direct observations of the underlying trajectory (i.e., there is no need for an underlying ``hidden" truth). 
As $\mathcal{X}$ is trivial in this application, we omit it in our notation. Next, we let the observed system $(\mathcal{Y},T,\nu)$ be  $(\mathcal{Z},\sigma|_{\mathcal{Z}}, \mu^*)$. Then we define the loss function $\ell : \Theta \times \mathcal{Y} \to \R$ by setting $\ell(\theta,y) = \mathcal{P}(f_{\theta}) - f_{\theta}(y)$. Using our regularity assumptions on $\Theta$ and $\{f_{\theta} : \theta \in \Theta\}$, one may easily check that conditions (i)-(iii) are satisfied. We have now specified all the objects necessary for the general framework of Section \ref{Sect:Intro}. Let $\pi_0 = \Pi_0$, and for $n \geq 1$, let $\pi_n( \cdot \mid y)$ 
be the Gibbs posterior distribution on $\Theta$ given observations $(y,\dots,T^{n-1}y)$. We remind the reader that $\pi_n( \cdot \mid y)$ and $\Pi_n( \cdot \mid y_0^{n-1})$ are formally distinct distributions. Nonetheless, the following lemma shows that they are closely related. 


\begin{lemma} \label{Lemma:Equivalent}
Let $K$ be the uniform Gibbs constant for the family $\{\mu_{\theta} : \theta \in \Theta \}$. Then for any Borel set $F \subset \Theta$ and $n \geq 1$, for $y = \{y_n\} \in \mathcal{Y}$,
\begin{equation*}
K^{-2} \pi_n( F \mid y) \leq \Pi_n(F \mid y_0^{n-1}) \leq K^2 \pi_n( F \mid y)
\end{equation*}
\end{lemma}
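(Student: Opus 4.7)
The plan is to reduce the identity to an elementary application of the uniform Gibbs property. First, I would unpack the definitions in the present setting. Since $\mathcal{X}$ is trivial, the Gibbs posterior $\pi_n( \cdot \mid y)$ takes the form
\begin{equation*}
\pi_n(F \mid y) = \frac{\int_F \exp\bigl(-\ell_n(\theta,y)\bigr) \, d\pi_0(\theta)}{\int_\Theta \exp\bigl(-\ell_n(\theta,y)\bigr) \, d\pi_0(\theta)},
\end{equation*}
where, by construction, $-\ell_n(\theta,y) = -n\,\mathcal{P}(f_\theta) + \sum_{k=0}^{n-1} f_\theta(T^k y)$, which is exactly the exponent that appears in the uniform Gibbs property (\ref{Eqn:UniformGibbs}).

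The key step is then to apply (\ref{Eqn:UniformGibbs}) with $x = y$ and $m = n$, recalling that $T = \sigma|_{\mathcal{Z}}$ coincides with $S$ on $\mathcal{Z}$ and that $[y_0^{n-1}]$ is precisely the cylinder $y[0,n-1]$. This yields the uniform (in $\theta$ and $y$) pointwise bound
\begin{equation*}
K^{-1} \exp\bigl(-\ell_n(\theta,y)\bigr) \, \leq \, \mu_\theta\bigl([y_0^{n-1}]\bigr) \, \leq \, K\, \exp\bigl(-\ell_n(\theta,y)\bigr).
\end{equation*}

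Finally, I would substitute these bounds separately into the numerator and denominator of
\begin{equation*}
\Pi_n(F \mid y_0^{n-1}) \, = \, \frac{\int_F \mu_\theta([y_0^{n-1}]) \, d\Pi_0(\theta)}{\int_\Theta \mu_\theta([y_0^{n-1}]) \, d\Pi_0(\theta)},
\end{equation*}
using $\Pi_0 = \pi_0$ and bounding the numerator above (resp.\ below) by $K$ (resp.\ $K^{-1}$) times $\int_F e^{-\ell_n}\, d\pi_0$, and dually for the denominator. The two factors of $K$ (one from each integral) combine to give the advertised constant $K^{\pm 2}$. There is no real obstacle here: the statement is essentially a bookkeeping consequence of the uniform Gibbs property, and the only care needed is to verify that the exponent in the loss matches the exponent in (\ref{Eqn:UniformGibbs}) and that $T$-orbits in $\mathcal{Y}$ coincide with $S$-orbits in $\mathcal{Z}$ so that the Gibbs bound is applicable.
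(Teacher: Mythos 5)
Your proposal is correct and follows essentially the same route as the paper: apply the uniform Gibbs property pointwise in $\theta$ (noting that $\ell_n(\theta,y) = n\,\mathcal{P}(f_\theta) - \sum_{k=0}^{n-1} f_\theta(T^k y)$ matches the Gibbs exponent), integrate over $F$ and over $\Theta$ against $\pi_0 = \Pi_0$, and combine the resulting bounds on numerator and denominator to pick up the factor $K^{\pm 2}$. No gaps.
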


\begin{proof}
By the uniform Gibbs property, for any $y \in \mathcal{Y}$, $\theta \in \Theta$, and $n \geq 1$, we have
\begin{equation*}
K^{-1} \exp \bigl( - \ell_n(\theta,y) \bigr) \leq \mu_{\theta}\bigl( y[0,n-1] \bigr) \leq K \exp \bigl( - \ell_n(\theta,y) \bigr).
\end{equation*}
Let $F \subset \Theta$ be a Borel set.  Integrating the inequality above with respect to $\pi_0 = \Pi_0$ yields the inequalities
\begin{align*}
K^{-1} \int_F \exp \bigl( - \ell_n(\theta,y) \bigr) \, d\pi_0(\theta) & \leq \int_F \mu_{\theta}\bigl( y[0,n-1] \bigr) \, d\Pi_0(\theta) \\
 &\leq K  \int_F  \exp \bigl( - \ell_n(\theta,y) \bigr) \, d\pi_0(\theta).
\end{align*}
Applying these upper and lower bounds to the sets $F$ and $\Theta$ we find that
\begin{align*}
K^{-2} \, \pi_n(F \mid y) & = K^{-2} \frac{\int_F \exp \bigl( - \ell_n(\theta,y) \bigr) \, d\pi_0(\theta)}{\int_{\Theta} \exp \bigl( - \ell_n(\theta,y) \bigr) \, d\pi_0(\theta)} \\
& \leq \frac{ \int_F \mu_{\theta}\bigl( y[0,n-1] \bigr) \, d\Pi_0(\theta)}{ \int_{\Theta} \mu_{\theta}\bigl( y[0,n-1] \bigr) \, d\Pi_0(\theta)} \\
& = \Pi_{n}( F \mid y_0^{n-1}),
\end{align*}
and similarly,
\begin{align*}
\Pi_n( F \mid y_0^{n-1}) & = \frac{ \int_F \mu_{\theta}\bigl( y[0,n-1] \bigr) \, d\Pi_0(\theta)}{ \int_{\Theta} \mu_{\theta}\bigl( y[0,n-1] \bigr) \, d\Pi_0(\theta)} \\
& \leq K^2 \frac{\int_F \exp \bigl( - \ell_n(\theta,y) \bigr) \, d\pi_0(\theta)}{\int_{\Theta} \exp \bigl( - \ell_n(\theta,y) \bigr) \, d\pi_0(\theta)} \\
& = K^2 \, \pi_n(F \mid y).
\end{align*}
\end{proof}

We require one additional fact before finishing the proof of the theorem. 
\begin{lemma} \label{Lemma:Darkness}
Under the present hypotheses, $\Theta_{\min} = [\theta^*]$.
\end{lemma}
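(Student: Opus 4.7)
The plan is to identify the rate function $V$ explicitly in this application and then recognize its minimizers via the variational principle for pressure together with Bowen's uniqueness theorem.

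First, I would reduce Definition~\ref{Defn:V} using the fact that $\mathcal{X}$ has been taken trivial. The only invariant measure on a one-point space is the point mass, so the joining space $\mathcal{J}(S : \nu)$ consists of a single element, namely $\delta_{\text{pt}} \otimes \nu$, which I identify with $\nu$ itself. The corresponding divergence term $D(\lambda : \mu_\theta)$ in that definition is computed with respect to the (trivial) Gibbs measure on $\mathcal{X}$: its fiber entropy vanishes and the potential term cancels the trivial pressure, so $D(\lambda : \mu_\theta) = 0$. Recalling that $\nu = \mu_{\theta^*}$ and $\ell(\theta, y) = \mathcal{P}(f_\theta) - f_\theta(y)$, this gives
\begin{equation*}
V(\theta) = \int \ell(\theta, y) \, d\mu_{\theta^*}(y) = \mathcal{P}(f_\theta) - \int f_\theta \, d\mu_{\theta^*}.
\end{equation*}

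Second, I would apply the variational principle for pressure (\ref{Eqn:VPTP}) on the mixing SFT $\mathcal{Z}$, together with Bowen's theorem, which asserts that the supremum
\begin{equation*}
\mathcal{P}(f_\theta) = \sup\left\{ h(\mu) + \int f_\theta \, d\mu : \mu \in \mathcal{M}(\mathcal{Z}, \sigma) \right\}
\end{equation*}
is attained uniquely at the Gibbs measure $\mu_\theta$. Substituting the admissible measure $\mu_{\theta^*}$ yields $V(\theta) \geq h(\mu_{\theta^*})$ for every $\theta \in \Theta$, with equality if and only if $\mu_{\theta^*}$ itself realizes the supremum for $f_\theta$. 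Evaluating at $\theta = \theta^*$ shows $V(\theta^*) = h(\mu_{\theta^*})$, so $h(\mu_{\theta^*})$ is the common minimum value and the minimizers are precisely those $\theta$ for which $\mu_\theta = \mu_{\theta^*}$, i.e., those $\theta \in [\theta^*]$. Hence $\Theta_{\min} = [\theta^*]$.

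The main subtlety I anticipate is the justification of the collapse of $\mathcal{J}(S : \nu)$ and the vanishing of $D(\lambda : \mu_\theta)$ when $\mathcal{X}$ is trivial; this requires unpacking the definitions carefully to confirm that the general framework genuinely specializes in this way. Once that reduction is in hand, the identification of $\Theta_{\min}$ with $[\theta^*]$ is a clean consequence of the variational principle for pressure combined with the uniqueness of the equilibrium state for a Hölder potential on a mixing SFT.
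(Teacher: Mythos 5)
Your proposal is correct and follows essentially the same route as the paper: the trivial $\mathcal{X}$ collapses $\mathcal{J}(S:\nu)$ to the single joining $\delta_{\mathrm{pt}}\otimes\mu_{\theta^*}$, the rate function reduces to (a positive multiple of) $\mathcal{P}(f_\theta)-\int f_\theta\,d\mu_{\theta^*}$, and the variational principle for pressure together with Bowen's uniqueness of equilibrium states identifies the minimizers exactly as $[\theta^*]$. The only difference is bookkeeping of the divergence term: you take $D\equiv 0$ (consistent with Definition \ref{Def:D} applied to the one-point model SFT), whereas the paper's proof evaluates it as $\mathcal{P}(f_\theta)-h^{\mu_{\theta^*}}(\delta_x\otimes\mu_{\theta^*})-\int f_\theta\,d\mu_{\theta^*}$ and so obtains $V(\theta)=2\bigl(\mathcal{P}(f_\theta)-\int f_\theta\,d\mu_{\theta^*}\bigr)$; the factor of $2$ is immaterial for the set of minimizers, so both computations give $\Theta_{\min}=[\theta^*]$.
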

\begin{proof}
Recall that $\Theta_{\min}$ is defined as the set of $\theta \in \Theta$ such that $V(\theta) = \inf \{ V(\theta') : \theta' \in \Theta\}$, where $V(\theta)$ is the rate function
\begin{equation*}
V(\theta) = \inf_{\lambda \in \mathcal{J}( S : \nu)} \biggl\{ \int \ell \, d\lambda + D(\lambda : \theta) \biggr\}.
\end{equation*}
As we have chosen $(\mathcal{X},S)$ to be trivial and $\nu = \mu_{\theta^*}$ in this application, the set of joinings $\mathcal{J}( S: \nu)$ contains only the trivial joining $\lambda = \delta_x \otimes \mu_{\theta^*}$. Hence the definition of $\ell$ ensures that
\begin{align*}
V(\theta) & = \mathcal{P}(f_{\theta}) - \int f_{\theta} \, d\mu_{\theta^*} + D( \delta_x \otimes \mu_{\theta^*} : \theta) \\
 & = \mathcal{P}(f_{\theta}) - \int f_{\theta} \, d\mu_{\theta^*}  + \biggl( \mathcal{P}(f_{\theta}) - h^{\mu_{\theta^*}}(\delta_x \otimes \mu_{\theta^*}) - \int f_{\theta} \, d\mu_{\theta^*} \biggr) \\
 & = 2 \biggl( \mathcal{P}(f_{\theta})  - \int f_{\theta} \, d\mu_{\theta^*} \biggr),
\end{align*}
where we have used that the fiber entropy of $\delta_x \otimes \mu_{\theta^*}$ over $\mu_{\theta^*}$ is trivially zero. 
To finish the proof, we will show that $V(\theta)$ is minimized if and only if $\mu_{\theta} = \mu_{\theta*}$, and therefore $\Theta_{\min} = [\theta^*]$.

First suppose that $\mu_{\theta} \neq \mu_{\theta^*}$. By the uniqueness of the Gibbs measure $\mu_{\theta}$ (see \cite{Bowen1975}) and the variational principle for pressure, we have that
\begin{equation*}
\int f_{\theta} \, d\mu_{\theta^*} + h(\mu_{\theta^*}) < \int f_{\theta} \, d\mu_{\theta} + h(\mu_{\theta}).
\end{equation*}
Subtracting $\int f_{\theta} d\mu_{\theta^*}$ from both sides, we obtain
\begin{equation*}
h(\mu_{\theta^*}) < \mathcal{P}(f_{\theta}) - \int f_{\theta} d\mu_{\theta^*}.
\end{equation*}
Then by the variational principle for pressure and this inequality, we have
\begin{align*}
V(\theta^*)/2 & = \mathcal{P}(f_{\theta^*}) - \int f_{\theta^*} d\mu_{\theta^*} \\
 & = \biggl( \int f_{\theta^*} d\mu_{\theta^*} + h(\mu_{\theta^*}) \biggr) - \int f_{\theta^*} d\mu_{\theta^*} \\
 & = h(\mu_{\theta^*}) \\
 & < \mathcal{P}(f_{\theta}) - \int f_{\theta} d\mu_{\theta^*} \\
 & = V(\theta)/2.
\end{align*}
Hence $\theta \notin \Theta_{\min}$, and we conclude that $\Theta_{\min} \subset [\theta^*]$.

Now suppose that $\mu_{\theta} = \mu_{\theta^*}$. Then by the variational principle for pressure and the fact that $\mu_{\theta} = \mu_{\theta^*}$,
\begin{align*}
V(\theta)/2 & = \mathcal{P}(f_{\theta}) - \int f_{\theta} d\mu_{\theta^*} \\
 & = \biggl( \int f_{\theta} d\mu_{\theta} + h(\mu_{\theta}) \biggr) - \int f_{\theta} d\mu_{\theta^*} \\
 & = h(\mu_{\theta}) \\
 & = h(\mu_{\theta^*}) \\
 & = \biggl( \int f_{\theta^*} d\mu_{\theta^*} + h(\mu_{\theta^*}) \biggr) - \int f_{\theta^*} d\mu_{\theta^*} \\
 & = \mathcal{P}(f_{\theta^*}) - \int f_{\theta^*} d\mu_{\theta^*} \\
 & = V(\theta^*)/2.
\end{align*}
Thus $\theta \in \Theta_{\min}$, and since $\theta \in [\theta^*]$ was arbitrary, we have shown that $\Theta_{\min} = [\theta^*]$.
%
\end{proof}

We now complete the proof of Theorem \ref{Thm:DirectGibbs}.
Let $U \subset \Theta$ be an open set such that $[\theta^*] \subset \mathcal{U}$, and let $F = \Theta \setminus U$.
By Lemma \ref{Lemma:Darkness}, we have $[\theta^*] = \Theta_{\min}$. 
Hence by Theorem \ref{Thm:PosteriorConvergence}, for $\mu^*$-almost every $y \in \mathcal{Y}$, the Gibbs posterior $\pi_n$ satisfies $\pi_n( F \mid y) \to 0$.
Then by Lemma \ref{Lemma:Equivalent}, for $\mu^*$-almost every $y \in \mathcal{Y}$, we see that the standard posterior satisfies $\Pi_n(F \mid y_0^{n-1}) \to 0$, as desired. 
\end{PfofDirectGibbs}

\section{Posterior consistency for hidden Gibbs processes}

In this section we establish posterior consistency for hidden Gibbs processes, as in Section \ref{Sect:HiddenGibbs}. In addition to modeling substantial dependence with the underlying Gibbs processes, this setting also allows for quite general observational noise models. Note that hidden Markov models with arbitrarily large order appear as a special case in this framework. Here the first part of the proof involves an application of our main results to show that the posterior converges to the set $\Theta_{\min}$. However, the second part of the proof begins with the well-known fact that the Gibbs measures $\mu_{\theta}$ satisfy large deviations principles (see \cite{Young1990}), and then relies on some recent results from \cite{McGoff2015} connecting these large deviations properties to the likelihood function in our general observational framework. 


\vspace{2mm}

\begin{PfofHiddenGibbs}

We begin by placing the setting of Section \ref{Sect:HiddenGibbs} within the general framework of Section \ref{Sect:Intro}. Let $\mathcal{X}$, $\{f_{\theta} : \theta \in \Theta\}$, $\{\mu_{\theta} : \theta \in \Theta\}$, $\Pi_0$, $\mathcal{U}$, $m$, and $\{\varphi_{\theta}( \cdot \mid x) : \theta \in \Theta, x \in \mathcal{X}\}$ be as in Section \ref{Sect:HiddenGibbs}. To define the observation space in our general framework, we let $\mathcal{Y} = \mathcal{U}^{\mathbb{N}}$. We define the map $T : \mathcal{Y} \to \mathcal{Y}$ to be the left-shift, i.e., if $y = \{y_k\} \in \mathcal{Y}$, 
then $T(y)$ is the sequence whose $k$-th coordinate is $y_{k+1}$. Furthermore, we define $\nu = \mathbb{P}_{\theta^*}^U$, which is the process measure on $\mathcal{Y}$ described in Section \ref{Sect:HiddenGibbs}. 
Then $(\mathcal{Y},T,\nu)$ is an ergodic measure preserving system (see \cite[Proposition 6.1]{McGoff2015} for ergodicity). Now define $\ell : \Theta \times \mathcal{X} \times \mathcal{Y} \to \mathbb{R}$ by $\ell(\theta,x,\{y_k\}) = - \log \varphi_{\theta}( y_0 \mid x)$. Note that the conditions (i)-(iii) on $\ell$ are satisfied by our assumptions on $\varphi$. Define $\pi_n( \cdot \mid y)$ to be the Gibbs posterior defined as in Section \ref{Sect:Intro}. 
Note that in this setting, if $y = \{y_k\}$, then the Gibbs posterior $\pi_n( \cdot \mid y)$ is equal to the standard posterior $\Pi_n( \cdot \mid y_0^{n-1})$. 
We require a few lemmas before finishing the proof of the theorem. Before we state the first such lemma, recall that $\ell^*$ denotes the $\nu$-integrable function on $\mathcal{Y}$ appearing in property (ii) in Section \ref{Sect:Inference}.

\begin{lemma} \label{Lemma:Mango}
Let $\theta \in \Theta$. Then for each $n \geq 1$ and $y \in \mathcal{Y}$, 
\begin{equation*}
\biggl| \frac{1}{n} \log \int_{\mathcal{X}} \exp\bigl( - \ell_n(\theta,x,y) \bigr) \, d\mu_{\theta}(x) \biggr| \leq \frac{1}{n} \sum_{k=0}^{n-1} \ell^* \bigl(T^ky\bigr).
\end{equation*}
\end{lemma}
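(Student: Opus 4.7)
The plan is to reduce the claim to a pointwise bound on the integrand using property (ii) of the loss function. By the definition of $\ell_n$ in (\ref{Eqn:Ellen}), the quantity $\ell_n(\theta,x,y)$ is a sum of $n$ per-state losses of the form $\ell(\theta,S^kx,T^ky)$, so property (ii) immediately yields the uniform pointwise bound
\begin{equation*}
|\ell_n(\theta,x,y)| \ \leq \ \sum_{k=0}^{n-1} \ell^*\bigl(T^ky\bigr),
\end{equation*}
which holds for every $\theta \in \Theta$ and $x \in \mathcal{X}$.

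From here I would simply exponentiate the two-sided inequality, obtaining that $\exp(-\ell_n(\theta,x,y))$ lies between $\exp\bigl(-\sum_{k=0}^{n-1}\ell^*(T^ky)\bigr)$ and $\exp\bigl(\sum_{k=0}^{n-1}\ell^*(T^ky)\bigr)$, uniformly in $x$. Since $\mu_\theta$ is a probability measure on $\mathcal{X}$, integrating preserves this sandwich, and then taking logarithms and dividing by $n$ gives
\begin{equation*}
-\frac{1}{n}\sum_{k=0}^{n-1}\ell^*\bigl(T^ky\bigr) \ \leq \ \frac{1}{n}\log\int_{\mathcal{X}}\exp\bigl(-\ell_n(\theta,x,y)\bigr)\,d\mu_\theta(x) \ \leq \ \frac{1}{n}\sum_{k=0}^{n-1}\ell^*\bigl(T^ky\bigr),
\end{equation*}
which is exactly the claimed inequality once we take absolute values.

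There is no real obstacle here: the lemma is essentially a bookkeeping statement that transfers the dominating function $\ell^*$ from a pointwise bound on $\ell$ to a bound on the log-integral of the likelihood along an orbit. The lemma will later be useful because, by property (ii), $n^{-1}\sum_{k=0}^{n-1}\ell^*(T^ky)$ converges $\nu$-almost surely to $\int \ell^* d\nu < \infty$ by the pointwise ergodic theorem, so the quantity on the left-hand side of the displayed inequality in the lemma is controlled uniformly in $n$ for $\nu$-typical $y$.
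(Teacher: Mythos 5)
Your proof is correct. The paper proves the same lemma by splitting into the cases $I_n(y) \leq 1$ and $I_n(y) > 1$, handling the first with Jensen's inequality (so that $-\tfrac{1}{n}\log I_n(y) \leq \tfrac{1}{n}\int \ell_n \, d\mu_\theta \leq \tfrac{1}{n}\sum_{k}\ell^*(T^ky)$) and the second by bounding the integral by the supremum of the integrand. You avoid both the case analysis and Jensen entirely: the termwise bound $|\ell(\theta,S^kx,T^ky)| \leq \ell^*(T^ky)$ from property (ii) gives the uniform sandwich $\exp\bigl(-\sum_k \ell^*(T^ky)\bigr) \leq \exp\bigl(-\ell_n(\theta,x,y)\bigr) \leq \exp\bigl(\sum_k \ell^*(T^ky)\bigr)$, and since $\mu_\theta$ is a probability measure the integral inherits the same two-sided bound, so taking logarithms and dividing by $n$ finishes the argument (note $\ell^* \geq 0$ automatically, so the absolute-value form follows from the sandwich). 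Both arguments use exactly the same input, namely the two-sided domination of $\ell$ by $\ell^*$; yours is the more streamlined route, while the paper's version makes the Jensen-type lower bound on the log-likelihood explicit, which is the estimate one would reach for if only an upper bound on the loss were available. Your closing remark about the ergodic theorem matches how the lemma is used in the paper (in the proof of Lemma \ref{Lemma:Peach}, via dominated convergence).
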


\begin{proof}
For notation, let
\begin{equation*}
I_n(y) = \int_{\mathcal{X}} \exp\bigl( - \ell_n(\theta,x,y) \bigr) \, d\mu_{\theta}(x).
\end{equation*}
First suppose that $I_n(y) \leq 1$. Then by Jensen's inequality and the definition of $\ell^*$,
\begin{align*}
\biggl| \frac{1}{n} \log I_n(y) \biggr| & = - \frac{1}{n} \log \int_{\mathcal{X}} \exp\bigl( - \ell_n(\theta,x,y) \bigr) \, d\mu_{\theta}(x) \\
& \leq  \frac{1}{n} \int \ell_n(\theta,x,y) \, d\mu_{\theta}(x) \\
& =  \frac{1}{n} \sum_{k=0}^{n-1}  \int \ell(\theta,S^kx,T^ky) \, d\mu_{\theta}(x) \\
& \leq  \frac{1}{n} \sum_{k=0}^{n-1} \ell^*\bigl(T^ky \bigr).
\end{align*}

Now suppose that $I_n(y) > 1$. Then 
\begin{align*}
\biggl| \frac{1}{n} \log I_n(y) \biggr| & = \frac{1}{n} \log \int_{\mathcal{X}} \exp\bigl( - \ell_n(\theta,x,y) \bigr) \, d\mu_{\theta}(x) \\
& \leq \frac{1}{n} \log \,  \sup_{x \in \mathcal{X}} \, \exp\bigl( - \ell_n(\theta,x,y) \bigr) \\
& \leq \frac{1}{n} \sum_{k=0}^{n-1} \sup_{x \in \mathcal{X}} |\ell(\theta,S^kx,T^k(y))| \\
& \leq  \frac{1}{n} \sum_{k=0}^{n-1} \ell^*\bigl(T^ky\bigr),
\end{align*}
where we have used that both the logarithm and the exponential are increasing. 
\end{proof}

\begin{lemma} \label{Lemma:Peach}
Let $\theta \in \Theta$. Then
\begin{equation*}
\lim_n - \frac{1}{n} \mathbb{E}_{\theta^*} \Bigl[ \log p_{\theta}\bigl(Y_0^{n-1}\bigr) \Bigr] = V(\theta).
\end{equation*}
\end{lemma}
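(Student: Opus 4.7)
The plan is to recognize $p_\theta(Y_0^{n-1})$ as a degenerate instance of the partition function $Z_n(Y)$ studied in Theorem \ref{Thm:Pressure}, deduce almost sure convergence, and then upgrade to convergence in expectation using the bound of Lemma \ref{Lemma:Mango} together with a Fatou-type argument.

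By the choice of loss $\ell(\theta,x,\{y_k\}) = -\log \varphi_\theta(y_0 \mid x)$ made at the start of the proof of Theorem \ref{Thm:HiddenGibbs}, one immediately has
\begin{equation*}
\ell_n(\theta,x,y) = -\log p_\theta\bigl(y_0^{n-1} \mid x\bigr), \qquad p_\theta\bigl(y_0^{n-1}\bigr) = \int \exp\bigl(-\ell_n(\theta,x,y)\bigr) \, d\mu_\theta(x).
\end{equation*}
Thus $p_\theta(Y_0^{n-1})$ coincides with the partition function $Z_n(Y)$ arising from the instance of the Gibbs posterior framework obtained by replacing the parameter space with the singleton $\{\theta\}$ and the prior with the point mass $\delta_\theta$; all standing assumptions are trivially inherited by this single-parameter instance. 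Applying Theorem \ref{Thm:Pressure} then yields
\begin{equation*}
-\frac{1}{n} \log p_\theta\bigl(Y_0^{n-1}\bigr) \longrightarrow V(\theta) \qquad \text{$\nu$-almost surely},
\end{equation*}
where we use that $V(\theta)$ in Definition \ref{Defn:V} is intrinsic to $\mathcal{J}(S:\nu)$, $\ell(\theta,\cdot,\cdot)$, and $\mu_\theta$ and therefore does not depend on the ambient parameter space used in the reduction.

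To upgrade this to convergence of expectations, I would set $X_n = -n^{-1} \log p_\theta(Y_0^{n-1})$ and $W_n(Y) = n^{-1} \sum_{k=0}^{n-1} \ell^*(T^k Y)$. Lemma \ref{Lemma:Mango} provides the pointwise bound $|X_n| \leq W_n$. The ergodicity of $\nu$ and the integrability of $\ell^*$ from property (ii) yield, via Birkhoff's theorem, that $W_n \to c := \int \ell^* \, d\nu$ almost surely, while $T$-invariance of $\nu$ ensures $\mathbb{E}_{\theta^*}[W_n] = c$ for every $n$. Since $X_n + W_n \geq 0$ converges almost surely to $V(\theta) + c$, Fatou's lemma gives $V(\theta) \leq \liminf_n \mathbb{E}_{\theta^*}[X_n]$; the symmetric application to the non-negative sequence $W_n - X_n$ yields $\limsup_n \mathbb{E}_{\theta^*}[X_n] \leq V(\theta)$, completing the proof.

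The main conceptual step is the degenerate reduction to Theorem \ref{Thm:Pressure}, which hinges on the observation that $V(\theta)$ in Definition \ref{Defn:V} is defined independently of any ambient parameter space. The remaining technicality is the $L^1$-style control needed to pass from almost sure to expected value convergence; here the dominating bound of Lemma \ref{Lemma:Mango}, which is itself a consequence of Jensen's inequality and the uniform bound $\sup_{\theta,x}|\ell(\theta,x,y)| \leq \ell^*(y)$, is exactly what enables the symmetric Fatou argument and circumvents the need for any additional uniform integrability hypothesis on the likelihood.
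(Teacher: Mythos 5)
Your proposal is correct and follows essentially the same route as the paper: the paper likewise identifies $-n^{-1}\log p_\theta(Y_0^{n-1})$ with the normalized log partition function for the fixed parameter $\theta$, invokes Theorem \ref{Thm:Pressure} to get the $\nu$-almost sure limit $V(\theta)$, and dominates by $F_n = n^{-1}\sum_{k=0}^{n-1}\ell^*\circ T^k$ via Lemma \ref{Lemma:Mango}. The only cosmetic difference is that where the paper cites the generalized Lebesgue dominated convergence theorem to interchange limit and expectation, you unpack that citation into its standard two-sided Fatou proof (using $\mathbb{E}_{\theta^*}[F_n]=\int \ell^*\,d\nu$ and $F_n \to \int \ell^*\,d\nu$ a.s.), and you make explicit the singleton-parameter reduction and the ambient-independence of $V(\theta)$ that the paper leaves implicit.
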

\begin{proof}
For each $n \geq 1$, let
\begin{equation*}
f_n(y) = - \frac{1}{n} \log \int_{\mathcal{X}} \exp \bigl( - \ell_n(\theta,x,y) \bigr) \, d\mu_{\theta}(x),
\end{equation*}
and let $F_n(y) = n^{-1} \sum_{k=0}^{n-1} \ell^*(T^ky)$. 
By property (ii), $\ell^*$ is $\nu$-integrable and thus 
the pointwise ergodic theorem ensures that $F_n(y)$ converges for $\nu$-almost every $y$ to the constant 
$\mathbb{E}_{\theta^*}[ \ell^*]$. Furthermore, $\lim_n \mathbb{E}_{\theta^*}[F_n] = \mathbb{E}_{\theta^*}[ \ell^*]$. 
By Lemma \ref{Lemma:Mango}, $|f_n| \leq F_n$ for each $n \geq 1$.  Therefore, by the generalized Lebesgue dominated convergence 
theorem and the definition of the loss,
\begin{align*}
\lim_n - \frac{1}{n} \mathbb{E}_{\theta^*} \Bigl[ \log p_{\theta}\bigl(Y_0^{n-1}\bigr) \Bigr] & = \lim_n \mathbb{E}_{\theta^*} [f_n] \\
 & = \mathbb{E}_{\theta^*} \Bigl[ \lim_n f_n \Bigr].
\end{align*}
By Theorem \ref{Thm:Pressure}, the $\mathbb{P}_{\theta^*}$-almost sure limit of $\{f_n\}$ is equal to $V(\theta)$. Combining these facts, we obtain the desired equality. 
\end{proof}

\begin{lemma} \label{Lemma:Watermelon}
Suppose $\theta \in \Theta \setminus [\theta^*]$. Then
\begin{equation*}
V(\theta^*) < \lim_n - \frac{1}{n} \mathbb{E}_{\theta^*} \Bigl[ \log p_{\theta}\bigl(Y_0^{n-1}\bigr) \Bigr].
\end{equation*}
\end{lemma}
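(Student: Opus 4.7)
The plan is to combine Lemma \ref{Lemma:Peach}, applied at both $\theta$ and $\theta^*$, with an identifiability argument based on the large deviations results of \cite{McGoff2015}. The idea is to recognize the right-hand side, after subtracting $V(\theta^*)$, as an asymptotic Kullback--Leibler divergence rate between the two observation processes, and then to rule out vanishing of this rate when $\theta \notin [\theta^*]$.

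More concretely, Lemma \ref{Lemma:Peach} first yields the two identifications $V(\theta) = \lim_n -n^{-1}\mathbb{E}_{\theta^*}[\log p_\theta(Y_0^{n-1})]$ and $V(\theta^*) = \lim_n -n^{-1}\mathbb{E}_{\theta^*}[\log p_{\theta^*}(Y_0^{n-1})]$. Subtracting these two identities gives
\[
V(\theta) - V(\theta^*) \ = \ \lim_n \frac{1}{n}\mathbb{E}_{\theta^*}\left[\log\frac{p_{\theta^*}(Y_0^{n-1})}{p_\theta(Y_0^{n-1})}\right],
\]
and each pre-limit quantity is the per-sample Kullback--Leibler divergence between the $n$-dimensional marginals of $\mathbb{P}^U_{\theta^*}$ and $\mathbb{P}^U_\theta$, hence non-negative by Jensen's inequality applied to $-\log$. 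It thus remains to exhibit strict positivity in the limit under the assumption $\mathbb{P}^U_\theta \neq \mathbb{P}^U_{\theta^*}$.

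For the strict inequality I plan to invoke the likelihood identifiability results of \cite{McGoff2015}: under the uniform Gibbs property of $\mu_\theta$ and condition (L2) on $\varphi$, the $\mathbb{P}^U_{\theta^*}$-a.s.\ limit $L(\theta) := \lim_n -n^{-1}\log p_\theta(Y_0^{n-1})$ exists and is deterministic, and the equality $L(\theta) = L(\theta^*)$ forces $\mathbb{P}^U_\theta = \mathbb{P}^U_{\theta^*}$. The generalized dominated convergence argument used inside Lemma \ref{Lemma:Peach} (against the envelope $\ell^*$) simultaneously identifies $V(\theta) = L(\theta)$ and $V(\theta^*) = L(\theta^*)$. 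Since $\theta \notin [\theta^*]$, the two limits must differ, and the non-negativity established above promotes the difference to the desired strict inequality $V(\theta) > V(\theta^*)$.

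The main obstacle is the identifiability input from \cite{McGoff2015}: translating the vanishing of the asymptotic cross-entropy into equality of the two full hidden-Gibbs process distributions. This step genuinely uses both the exponential estimates provided by the uniform Gibbs property and the H\"{o}lder-type regularity of $\varphi$ captured by (L2); without these ingredients the divergence rate could plausibly vanish even for distinct hidden observation processes, and the proof would break down at precisely this point. Once the identifiability implication is accepted, the remaining manipulations reduce to bookkeeping around Lemma \ref{Lemma:Peach} and Jensen's inequality.
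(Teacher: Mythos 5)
Your argument is correct and essentially the paper's own: the paper likewise combines Lemma \ref{Lemma:Peach} with the identifiability results of \cite{McGoff2015} (Propositions 4.3 and 6.4), which deliver exactly the strict separation that you phrase in contrapositive form, so your added Jensen nonnegativity step is harmless but not needed. The only point to tighten is the verification of the hypotheses of \cite{McGoff2015}: the paper obtains the required condition (L1) from the large deviations principle for the Gibbs measures $\mu_{\theta}$ \cite{Young1990}, rather than from the uniform Gibbs property directly, while (L2) is assumed, as you say.
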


\begin{proof}
The well-known large deviations principles for the Gibbs measures $\mu_{\theta}$ \cite{Young1990} imply that they satisfy property (L1) from \cite{McGoff2015}. By hypothesis, $g$ satisfies the regularity of observations property (L2) from \cite{McGoff2015}. Then results from \cite{McGoff2015} (in particular Propositions 4.3 and 6.4) yield the desired inequality.
\end{proof}

We now proceed with the proof of Theorem \ref{Thm:HiddenGibbs}.
Recall that for $y = \{y_k\} \in \mathcal{Y}$ our choice of loss function ensures that 
the Bayesian posterior $\Pi_n( \cdot \mid y_0^{n-1})$ is equal to the Gibbs posterior $\pi_n( \cdot \mid y)$. 
By Theorem \ref{Thm:PosteriorConvergence}, the Gibbs posterior $\pi_n( \cdot \mid Y)$ concentrates $\nu$-almost surely around the set $\Theta_{\min}$, defined as the set of $\theta \in \Theta$ such that $V(\theta) = \inf \{ V(\theta') : \theta' \in \Theta \}$. Hence $\Pi_n( \cdot \mid Y_0^{n-1})$ concentrates $\mathbb{P}_{\theta^*}^U$-almost surely around $\Theta_{\min}$. It remains to show that $\Theta_{\min} = [\theta^*]$.

Suppose $\theta \in \Theta \setminus [\theta^*]$. Then by Lemmas \ref{Lemma:Peach} and \ref{Lemma:Watermelon}, we have
\begin{equation*}
V(\theta) = \lim_n - \frac{1}{n} \mathbb{E}_{\theta^*} \Bigl[ \log p_{\theta}\bigl(Y_0^{n-1}\bigr) \Bigr] > V(\theta^*).
\end{equation*}
It follows immediately that $\Theta_{\min} \subset [\theta^*]$. For the reverse inclusion, note that if $\theta \in [\theta^*]$, then $\mathbb{P}_{\theta}^U = \mathbb{P}_{\theta^*}^U$, and thus for each $n$, 
\begin{equation*}
\mathbb{E}_{\theta^*} \biggl[ \log p_{\theta} \bigl(Y_0^{n-1} \bigr) \biggr] = \mathbb{E}_{\theta^*} \biggl[ \log p_{\theta^*} \bigl(Y_0^{n-1} \bigr) \biggr].
\end{equation*}
Then Lemma \ref{Lemma:Peach} gives that $V(\theta) = V(\theta^*)$ for each $\theta \in [\theta^*]$. 
This concludes the proof of Theorem \ref{Thm:HiddenGibbs}.
\end{PfofHiddenGibbs}

\section{Additional results}

In this section we collect some auxiliary results about Gibbs posterior inference. 
We begin with a converse to Theorem \ref{Thm:PosteriorConvergence} on the exponential scale: 
if $U$ is an open set intersecting $\Theta_{\min}$, 
then the Gibbs posterior measure of $U$ cannot be exponentially small as $n$ tends to infinity.

\begin{proposition} \label{Prop:TakeABite}
Suppose $U \subset \Theta$ is open and $U \cap \Theta_{\min} \neq \varnothing$. Then for $\nu$-almost every $y \in \mathcal{Y}$,
\begin{equation*}
\lim_n \frac{1}{n} \log \pi_n( U \mid y) = 0.
\end{equation*}
\end{proposition}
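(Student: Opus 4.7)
The plan is to express $\pi_n(U \mid y)$ as a ratio of two partition functions and apply Theorem \ref{Thm:Pressure} twice, once to the denominator (on the full parameter space) and once to the numerator (on the closure $\overline{U}$ with a conditional prior). Concretely, write
\begin{equation*}
\pi_n(U \mid y) \ = \ \frac{Q_n(y)}{Z_n(y)}, \qquad Q_n(y) \ = \ \int_{U \times \mathcal{X}} \exp\bigl(-\ell_n(\theta,x,y)\bigr) \, dP_0(\theta,x).
\end{equation*}
The upper bound $\tfrac{1}{n}\log \pi_n(U \mid y) \le 0$ is trivial, so only the matching liminf needs to be established, and that will follow by showing both $-\tfrac{1}{n}\log Z_n(y)$ and $-\tfrac{1}{n}\log Q_n(y)$ tend to the same constant $V_* := \inf_{\theta \in \Theta} V(\theta)$.

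For the denominator, Theorem \ref{Thm:Pressure} already gives $-\tfrac{1}{n}\log Z_n(y) \to V_*$ for $\nu$-almost every $y$. For the numerator, I would re-apply Theorem \ref{Thm:Pressure} with the compact parameter space $\overline{U}$, the regular subfamily $\{f_\theta : \theta \in \overline{U}\}$, and the conditional prior $\pi_0^U(\cdot) = \pi_0(\cdot \cap U)/\pi_0(U)$ (which is well defined since $\pi_0$ is fully supported and $U$ is open). The only hypothesis that requires a short check is that $\pi_0^U$ is fully supported on $\overline{U}$: if $V$ is a non-empty relatively open subset of $\overline{U}$, write $V = W \cap \overline{U}$ for some $W$ open in $\Theta$; every point of $V$ lies in $\overline{U}$, so $W$ meets $U$, giving $V \cap U = W \cap U$ a non-empty open subset of $\Theta$ and hence $\pi_0^U(V) = \pi_0(W \cap U)/\pi_0(U) > 0$. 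The resulting normalizing constant is $\widetilde Z_n(y) = Q_n(y)/\pi_0(U)$, so Theorem \ref{Thm:Pressure} applied to this restricted system yields
\begin{equation*}
\lim_n -\frac{1}{n}\log Q_n(y) \ = \ \inf_{\theta \in \overline{U}} V(\theta)
\end{equation*}
for $\nu$-almost every $y$, using that the rate function in Definition \ref{Defn:V} is intrinsic to $\theta$ and therefore unchanged by restricting the parameter set.

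To conclude, I would show that $\inf_{\theta \in \overline{U}} V(\theta) = V_*$. The inequality $\ge V_*$ is immediate from $\overline{U} \subset \Theta$, while the reverse inequality uses the hypothesis $U \cap \Theta_{\min} \neq \varnothing$: any $\theta_0 \in U \cap \Theta_{\min}$ lies in $\overline{U}$ and satisfies $V(\theta_0) = V_*$. Combining the two limits,
\begin{equation*}
\frac{1}{n}\log \pi_n(U \mid y) \ = \ \frac{1}{n}\log Q_n(y) \, - \, \frac{1}{n}\log Z_n(y) \ \longrightarrow \ -V_* - (-V_*) \ = \ 0,
\end{equation*}
as required.

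The argument does not involve any genuinely new dynamical or entropic input beyond Theorem \ref{Thm:Pressure}; the real work has already been done there. The main obstacle is therefore the bookkeeping of the second application of Theorem \ref{Thm:Pressure}, namely verifying that conditioning a fully supported prior on an open set produces a fully supported prior on the closure of that set, and then correctly identifying the resulting normalizer with $Q_n(y)$ up to the harmless factor $\pi_0(U)$.
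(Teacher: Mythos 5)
Your proposal is correct, but it takes a different route from the paper's own proof of this proposition. The paper argues by localization: it picks $\theta_0 \in U \cap \Theta_{\min}$, takes a $\delta$-ball $U_0 \subset U$ around $\theta_0$ with $\int \rho_\delta \, d\nu < \epsilon$, and lower-bounds $\pi_n(U \mid y)$ by $\pi_0(U_0) \exp\bigl(-\sum_{k<n} \rho_\delta(T^k y)\bigr) \int_{\mathcal{X}} \exp\bigl(-\ell_n(\theta_0,x,y)\bigr) \, d\mu_{\theta_0}(x) / Z_n(y)$, using the modulus-of-continuity condition (iii) on the loss, the pointwise ergodic theorem for $\rho_\delta$, and the convergence of both the full partition function and the single-parameter one at $\theta_0$ (the latter being Theorem \ref{Thm:Pressure} at a singleton). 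You instead bypass the localization entirely and apply Theorem \ref{Thm:Pressure} to the conditioned prior $\pi_0(\cdot \cap U)/\pi_0(U)$ on the compact set $\overline{U}$ — which is exactly the restricted-prior device the paper itself uses in the proof of Theorem \ref{Thm:PosteriorConvergence} — together with the observation that the rate function of Definition \ref{Defn:V} is intrinsic to $\theta$ and so unchanged under restriction of the parameter set. Your verification that conditioning a fully supported prior on the open set $U$ yields a prior fully supported on $\overline{U}$ is correct and is in fact more careful than the paper's analogous step (where the conditional prior on the closed set $F = \Theta \setminus U$ need not be fully supported on $F$). What each approach buys: yours avoids condition (iii) and the ergodic-theorem step at this stage and actually identifies the exact exponential rate $\lim_n n^{-1} \log \pi_n(U \mid y) = V_* - \inf_{\theta \in \overline{U}} V(\theta)$ for an arbitrary open $U$, with the stated proposition as the special case $U \cap \Theta_{\min} \neq \varnothing$; the paper's localization argument is more self-contained at the level of a single minimizer and parallels the classical ``prior mass near the truth'' structure of Bayesian consistency proofs, at the cost of an extra $\epsilon$-$\delta$ bookkeeping step. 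No gap in your argument.
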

\begin{proof}
Let $\theta_0 \in U \cap \Theta_{\min}$.  By definition of $\Theta_{\min}$ we have 
$V(\theta_0) = V_* = \inf_{\theta} V(\theta)$. 
Fix $\epsilon > 0$ and select $\delta > 0$ sufficiently small that $\int \rho_{\delta} \, d\nu < \epsilon$ and that 
the ball $U_0$ of radius $\delta$ around $\theta_0$ is contained in $U$. 
Since $\pi_0$ is fully supported, $\pi_0(U_0) >0$. 
Note that for each $y \in \mathcal{Y}$ and $n \geq 1$, 
\begin{align*}
\pi_n( U \mid y) & = \frac{1}{Z_n(y)} \int_U \int_{\mathcal{X}} \exp \bigl( - \ell_n(\theta,x,y) \bigr) \, d\mu_{\theta}(x) d\pi_0(\theta) \\
 & \geq \frac{1}{Z_n(y)} \exp \biggl( - \sum_{k = 0}^{n-1} \rho_{\delta}( T^k y) \biggr) \int_{\mathcal{X}} \exp \bigl( - \ell_n(\theta_0,x,y) \bigr) \, d\mu_{\theta_0}(x) \cdot \pi_0(U_0).
\end{align*}
Taking logarithms, dividing by $n$, and letting $n$ tend to infinity yields
\begin{equation*}
\liminf_{n} \frac{1}{n} \log \pi_n( U \mid y) \, \geq \, V_* - \int \rho_{\delta} \, d\nu - V_* \geq - \epsilon.
\end{equation*}
As $\epsilon >0$ was arbitrary, we obtain the desired result.
\end{proof}

We now address the Ces\`{a}ro convergence of the full posterior $P_n$ on $\Theta \times \mathcal{X}$. 
Recall that we let $\Id : \Theta \to \Theta$ be the identity map on $\Theta$. 
In the thermodynamic formalism, invariant measures that achieve the optimal value in the variational expression for pressure are called equilibrium measures.
In our setting, we introduce terminology for joinings that achieve the optimal value in the variational expression for the rate function.
We will call a joining $\lambda \in \mathcal{J}( \Id \times S : \nu)$ an equilibrium joining if 
\begin{equation*}
\lambda \in \argmin \biggl\{ \int \ell \, d\lambda' + G(\lambda') : \lambda' \in \mathcal{J}( \Id \times S : \nu) \biggr\}.
\end{equation*}

\begin{proposition} \label{Prop:CesaroConvergence}
For each $y \in \mathcal{Y}$ and $n \geq 1$, let $Q_n( \cdot \mid y) \in \mathcal{M}( \Theta \times \mathcal{X})$ be defined for Borel sets $E \subset \Theta \times \mathcal{X}$ by
\begin{equation*}
Q_n( E \mid y ) = \frac{1}{n} \sum_{k=0}^{n-1} P_n( (\Id \times S)^{-k} E  \mid y).
\end{equation*}
Then for $\nu$-almost every $y \in \mathcal{Y}$, all limit points of $\{Q_n( \cdot \mid y) \}_{n \geq 1}$ are $(\Theta \times \mathcal{X})$-marginals of equilibrium joinings.
\end{proposition}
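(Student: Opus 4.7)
The plan is to recognize that the Cesàro averages $Q_n(\cdot \mid y)$ are precisely the $\Theta \times \cX$-marginals of the joining-valued empirical measures
\begin{equation*}
\eta_n^y = \frac{1}{n} \sum_{k=0}^{n-1} (P_n^y \circ R^{-k}) \otimes \delta_{T^k y}
\end{equation*}
already introduced in the proof of Proposition \ref{Prop:UB} (where $R = \Id \times S$), and then to harvest from that proof the fact that any weak$^*$ limit of $\{\eta_n^y\}$ must be an equilibrium joining.

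First I would fix $y$ in the intersection of the full-measure sets on which Theorem \ref{Thm:Pressure} holds and on which \cite[Lemma 2.1]{Kifer2001} applies, so that $\{\eta_n^y\}$ is tight and every limit point lies in $\cJ(R : \nu)$. Let $Q$ be any weak$^*$ limit point of $\{Q_n(\cdot \mid y)\}$, with $Q_{n_k}(\cdot \mid y) \to Q$ along some subsequence. Passing to a further subsequence (still denoted $n_k$), we may assume $\eta_{n_k}^y \to \lambda$ for some $\lambda \in \cJ(R : \nu)$. Since the projection onto the $\Theta \times \cX$ coordinates is weak$^*$-continuous and the $\Theta \times \cX$-marginal of $\eta_n^y$ is exactly $Q_n(\cdot \mid y)$, it follows that $Q$ is the $\Theta \times \cX$-marginal of $\lambda$.

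It then remains to show that $\lambda$ is an equilibrium joining. Here I would revisit the argument inside the proof of Proposition \ref{Prop:UB}: the computation there fixes an individual limit point of $\{\eta_n^y\}$ and establishes, before any supremum is taken, the pointwise bound
\begin{equation*}
\limsup_n \frac{1}{n} \log Z_n(y) \leq \int g \, d\lambda - G(\lambda).
\end{equation*}
Combining this with Theorem \ref{Thm:Pressure} and the variational expression derived from Propositions \ref{Prop:LB}--\ref{Prop:UB},
\begin{equation*}
\lim_n \frac{1}{n} \log Z_n(y) = \sup_{\lambda' \in \cJ(R : \nu)} \biggl\{ \int g \, d\lambda' - G(\lambda') \biggr\},
\end{equation*}
together with the trivial upper bound $\int g \, d\lambda - G(\lambda) \leq \sup_{\lambda'} \{\int g \, d\lambda' - G(\lambda')\}$, forces equality $\int g \, d\lambda - G(\lambda) = \sup_{\lambda'}\{\int g \, d\lambda' - G(\lambda')\}$. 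Equivalently, $\lambda$ attains the minimum of $\int \ell \, d\lambda' + G(\lambda')$ over $\cJ(\Id \times S : \nu)$, so $\lambda$ is an equilibrium joining and $Q = \proj_{\Theta \times \cX}(\lambda)$ is as claimed.

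The only real obstacle is a notational one: one has to isolate the pointwise inequality for a single limit point from the proof of Proposition \ref{Prop:UB} (which presents the argument in a form that ultimately takes a supremum over all limit points). Once this pointwise bound is extracted, the variational principle from Theorem \ref{Thm:Pressure} does all the work, and no additional estimates are needed.
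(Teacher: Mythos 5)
Your proposal is correct and follows essentially the same route as the paper: the paper's proof likewise identifies $Q_n(\cdot \mid y)$ as the $(\Theta \times \mathcal{X})$-marginal of $\eta_n^y$, passes to a further subsequence along which $\eta_n^y$ converges, and invokes the arguments of Section \ref{Sect:UB} together with Theorem \ref{Thm:Pressure} to conclude the limit is an equilibrium joining. Your write-up simply makes explicit the pointwise bound for a single limit point that the paper leaves implicit in the phrase ``by repeating the arguments of Section \ref{Sect:UB}.''
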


\begin{proof}
As in Section \ref{Sect:UB}, let
\begin{equation*}
\eta_n = \frac{1}{n} \sum_{k=0}^{n-1} (P_n^y \circ (\Id \times S)^{-k}) \otimes \delta_{T^ky}.
\end{equation*}
By definition, $Q_n( \cdot \mid y)$ is the $(\Theta \times \mathcal{X})$-marginal of $\eta_n$. 
Let $Q$ be a weak limit of the subsequence $\{Q_{n_k}( \cdot \mid y) \}_{k \geq 1}$. 
By repeating the arguments of Section \ref{Sect:UB}, one may show that there is a subsequence 
$\{n_{k_j}\}_{j \geq 1}$ such that $\{\eta_{n_{k_j}}\}_{j \geq 1}$ converges weakly to an equilibrium joining 
$\lambda$.  As $Q$ is necessarily the $(\Theta \times \mathcal{X})$-marginal of the limit
$\lambda$, the proof is complete. 
\end{proof}

\bibliographystyle{plain}
\bibliography{Gibbs_refs}

\begin{thebibliography}{10}

\bibitem{Allahbakhshi2013}
Mahsa Allahbakhshi and Anthony Quas.
\newblock Class degree and relative maximal entropy.
\newblock {\em Transactions of the American Mathematical Society},
  365(3):1347--1368, 2013.

\bibitem{Allahbakhshi2017}
Masha Allahbakhshi, John Antonioli, and Jisang Yoo.
\newblock Relative equilibrium states and class degree.
\newblock {\em Ergodic Theory and Dynamical Systems}, pages 1--24, 2017.

\bibitem{Alves2018}
Jos\'{e}~F. Alves, Vanessa Ramos, and Jaqueline Siqueira.
\newblock Equilibrium stability for non-uniformly hyperbolic systems.
\newblock {\em Ergodic Theory and Dynamical Systems}, pages 1--24, 2018.

\bibitem{Antonioli2016}
John Antonioli.
\newblock Compensation functions for factors of shifts of finite type.
\newblock {\em Ergodic Theory and Dynamical Systems}, 36(2):375--389, 2016.

\bibitem{Benedicks2000}
Michael Benedicks and Lai-Sang Young.
\newblock Markov extensions and decay of correlations for certain {H}{\'e}non
  maps.
\newblock {\em Ast{\'e}risque}, 261:13--56, 2000.

\bibitem{Bissiri2016}
Pier~Giovanni Bissiri, Chris~C Holmes, and Stephen~G Walker.
\newblock A general framework for updating belief distributions.
\newblock {\em Journal of the Royal Statistical Society: Series B (Statistical
  Methodology)}, 78(5):1103--1130, 2016.

\bibitem{Bowen1975}
Rufus Bowen.
\newblock {\em Equilibrium States and the Ergodic Theory of Anosov
  Diffeomorphisms}, volume 470.
\newblock Springer, Berlin, Heidelberg, 1975.

\bibitem{Chazottes1998}
J-R Chazottes, E~Floriani, and R~Lima.
\newblock Relative entropy and identification of {G}ibbs measures in dynamical
  systems.
\newblock {\em Journal of Statistical Physics}, 90(3-4):697--725, 1998.

\bibitem{Chopinetal2015}
Nicolas Chopin, S\'ebastien Gadat, Benjamin Guedj, Arnaud Guyader, and Elodie
  Vernet.
\newblock On some recent advances on high dimensional {B}ayesian statistics.
\newblock {\em ESAIM: Proceedings and Surveys}, 51:293--319, 2015.

\bibitem{Cover2012}
Thomas~M Cover and Joy~A Thomas.
\newblock {\em Elements of information theory}.
\newblock John Wiley \& Sons, 2012.

\bibitem{Rue2006}
Thierry de~la Rue.
\newblock {An Introduction to Joinings in Ergodic Theory}.
\newblock {\em Discrete and Continuous Dynamical Systems}, 15(1):121--142,
  2006.

\bibitem{diaconis1998}
Persi~W. Diaconis and David Freedman.
\newblock Consistency of {B}ayes estimates for nonparametric regression: normal
  theory.
\newblock {\em Bernoulli}, 4(4):411--444, 12 1998.

\bibitem{Doob49CNRS}
J.~L. Doob.
\newblock Application of the theory of martingales.
\newblock {\em Colloques Internationaux du Centre National de la Recherche
  Scientifique}, pages 23--27, 1949.

\bibitem{Doucetal2016}
Randal Douc, Jimmy Olsson, and Francois Roueff.
\newblock Posterior consistency for partially observed {M}arkov models.
\newblock {\em arXiv preprint arXiv:1608.06851}, 2016.

\bibitem{Dupuis2011}
Paul Dupuis and Richard~S Ellis.
\newblock {\em {A Weak Convergence Approach to the Theory of Large
  Deviations}}, volume 902.
\newblock John Wiley \& Sons, 2011.

\bibitem{Furstenberg1967}
Harry Furstenberg.
\newblock Disjointness in ergodic theory, minimal sets, and a problem in
  {D}iophantine approximation.
\newblock {\em Theory of Computing Systems}, 1(1):1--49, 1967.

\bibitem{Gassiat2014}
Elisabeth Gassiat and Judith Rousseau.
\newblock About the posterior distribution in hidden {M}arkov models with
  unknown number of states.
\newblock {\em Bernoulli}, 20(4):2039--2075, 2014.

\bibitem{GemanGeman1984}
Stuart Geman and Donald Geman.
\newblock {Stochastic Relaxation, Gibbs Distributions, and the Bayesian
  Restoration of Images}.
\newblock {\em IEEE Transactions on Pattern Analysis and Machine Intelligence},
  6(6):721--741, November 1984.

\bibitem{Ghosal2017}
Subhashis Ghosal and Aad Van~der Vaart.
\newblock {\em {Fundamentals of Nonparametric Bayesian Inference}}, volume~44.
\newblock Cambridge University Press, 2017.

\bibitem{Glasner2003}
Eli Glasner.
\newblock {\em {Ergodic Theory Via Joinings}}, volume 101.
\newblock American Mathematical Soc., 2003.

\bibitem{Hang2017}
Hanyuan Hang and Ingo Steinwart.
\newblock {A Bernstein-type inequality for some mixing processes and dynamical
  systems with an application to learning}.
\newblock {\em The Annals of Statistics}, 45(2):708--743, 2017.

\bibitem{Huber1981}
Peter~.J. Huber.
\newblock {\em {Robust Statistics}}.
\newblock Wiley New York, 1981.

\bibitem{Jaynes1968}
Edwin~T Jaynes.
\newblock {Prior Probabilities}.
\newblock {\em IEEE Transactions on Systems Science and Cybernetics}, 4(3):227,
  1968.

\bibitem{Jaynes1973-JAYTWP}
Edwin~T Jaynes.
\newblock The well-posed problem.
\newblock {\em Foundations of Physics}, 3(4):477--493, 1973.

\bibitem{Jiang2008}
Wenxin Jiang and Martin~A Tanner.
\newblock Gibbs posterior for variable selection in high-dimensional
  classification and data mining.
\newblock {\em The Annals of Statistics}, pages 2207--2231, 2008.

\bibitem{Kifer2001}
Yuri Kifer.
\newblock On the topological pressure for random bundle transformations.
\newblock In {\em Topology, ergodic theory, real algebraic geometry}, volume
  202 of {\em American Mathematical Society Translations. Series 2}, pages
  197--214. Amer. Math. Soc., Providence, RI, 2001.

\bibitem{Kifer2006}
Yuri Kifer and Pei-Dong Liu.
\newblock Random dynamics.
\newblock {\em Handbook of dynamical systems}, 1:379--499, 2006.

\bibitem{Lalley1999}
Steven~P. Lalley.
\newblock Beneath the noise, chaos.
\newblock {\em The Annals of Statistics}, 27(2):461--479, 1999.

\bibitem{LalleyNobel2006}
Steven~P. Lalley and A.~B. Nobel.
\newblock Denoising deterministic time series.
\newblock {\em Dynamics of Partial Differential Equations}, 3(4):259--279,
  2006.

\bibitem{Law2015}
Kody Law, Andrew Stuart, and Kostas Zygalakis.
\newblock {\em Data Assimilation}.
\newblock Cham, Switzerland: Springer, 2015.

\bibitem{LedrappierWalters1977}
Fran{\c{c}}ois Ledrappier and Peter Walters.
\newblock A relativised variational principle for continuous transformations.
\newblock {\em Journal of the London Mathematical Society}, 2(3):568--576,
  1977.

\bibitem{LindMarcus}
Douglas Lind and Brian Marcus.
\newblock {\em An introduction to symbolic dynamics and coding}.
\newblock Cambridge university press, 1995.

\bibitem{Luetal}
Yulong Lu, Andrew Stuart, and Hendrik Weber.
\newblock {Gaussian approximations for transition paths in Brownian dynamics}.
\newblock {\em SIAM Journal on Mathematical Analysis}, 49(4):3005Ð3047, 2017.

\bibitem{McGoff2015}
Kevin McGoff, Sayan Mukherjee, Andrew Nobel, and Natesh Pillai.
\newblock Consistency of maximum likelihood estimation for some dynamical
  systems.
\newblock {\em The Annals of Statistics}, 43(1):1--29, 2015.

\bibitem{McGoffSurvey2015}
Kevin McGoff, Sayan Mukherjee, and Natesh Pillai.
\newblock Statistical inference for dynamical systems: A review.
\newblock {\em Statistics Surveys.}, 9:209--252, 2015.

\bibitem{McGoff2016variational}
Kevin McGoff and Andrew Nobel.
\newblock Variational analysis of inference from dynamical systems.
\newblock {\em arXiv preprint arXiv:1601.05033}, 2016.

\bibitem{McGoff2016empirical}
Kevin McGoff and Andrew~B Nobel.
\newblock Empirical risk minimization and complexity of dynamical models.
\newblock {\em arXiv preprint arXiv:1611.06173}, 2016.

\bibitem{Mitter2}
Nigel~J Newton and Sanjoy~K Mitter.
\newblock Variational approach to nonlinear estimation.
\newblock {\em SIAM Journal on Control and Optimization}, 42(5):1813Ð1833,
  2003.

\bibitem{1742-5468-2012-11-P11008}
Nigel~J Newton and Sanjoy~K Mitter.
\newblock {Variational Bayes and a problem of reliable communication: II.
  Infinite systems}.
\newblock {\em Journal of Statistical Mechanics: Theory and Experiment},
  2012(11):P11008, 2012.

\bibitem{Petersen2003}
Karl Petersen, Anthony Quas, and Sujin Shin.
\newblock Measures of maximal relative entropy.
\newblock {\em Ergodic Theory and Dynamical Systems}, 23(1):207--223, 2003.

\bibitem{Ruelle2004}
David Ruelle.
\newblock {\em Thermodynamic formalism: the mathematical structure of
  equilibrium statistical mechanics}.
\newblock Cambridge University Press, 2004.

\bibitem{Sarig2008}
Omri Sarig.
\newblock {\em Lecture Notes on Ergodic Theory}.
\newblock 2008.

\bibitem{Schwartz1965}
Lorraine Schwartz.
\newblock On {B}ayes procedures.
\newblock {\em Zeitschrift f{\"u}r Wahrscheinlichkeitstheorie und verwandte
  Gebiete}, 4(1):10--26, 1965.

\bibitem{Steinwart2009}
Ingo Steinwart and Marian Anghel.
\newblock Consistency of support vector machines for forecasting the evolution
  of an unknown ergodic dynamical system from observations with unknown noise.
\newblock {\em The Annals of Statistics}, pages 841--875, 2009.

\bibitem{Vernet2015}
Elodie Vernet.
\newblock Posterior consistency for nonparametric hidden {M}arkov models with
  finite state space.
\newblock {\em Electronic Journal of Statistics}, 9(1):717--752, 2015.

\bibitem{Walters1982}
Peter Walters.
\newblock {\em An introduction to ergodic theory}, volume~79.
\newblock Springer Science \& Business Media, 1982.

\bibitem{Walters1986}
Peter Walters.
\newblock Relative pressure, relative equilibrium states, compensation
  functions and many-to-one codes between subshifts.
\newblock {\em Transactions of the American Mathematical Society},
  296(1):1--31, 1986.

\bibitem{Young1990}
Lai-Sang Young.
\newblock Large deviations in dynamical systems.
\newblock {\em Transactions of the American Mathematical Society},
  318(2):525--543, 1990.

\bibitem{Young1998}
Lai-Sang Young.
\newblock Statistical properties of dynamical systems with some hyperbolicity.
\newblock {\em Annals of Mathematics}, 147(3):585--650, 1998.

\bibitem{Young1999}
Lai-Sang Young.
\newblock Recurrence times and rates of mixing.
\newblock {\em Israel Journal of Mathematics}, 110(1):153--188, 1999.

\bibitem{Zellner1988}
Arnold Zellner.
\newblock {Optimal Information Processing and Bayes's Theorem}.
\newblock {\em The American Statistician}, 42(4):278--280, 1988.

\bibitem{Zhang2006a}
Tong Zhang.
\newblock From $\epsilon$-entropy to {KL}-entropy: {A}nalysis of minimum
  information complexity density estimation.
\newblock {\em The Annals of Statistics}, 34(5):2180--2210, 2006.

\bibitem{Zhang2006b}
Tong Zhang.
\newblock Information-theoretic upper and lower bounds for statistical
  estimation.
\newblock {\em IEEE Transactions on Information Theory}, 52(4):1307--1321,
  2006.

\end{thebibliography}

\end{document}